\documentclass[11pt]{amsart}

\usepackage[margin=1in]{geometry}
\usepackage{amsmath,amssymb,amsthm,mathtools}
\usepackage{enumitem}
\usepackage{booktabs}
\usepackage{array}
\usepackage{tikz}
\usetikzlibrary{positioning,calc}
\usepackage[colorlinks=true,linkcolor=blue,citecolor=blue,urlcolor=blue]{hyperref}
\hypersetup{
  pdftitle={Higher-Power Inverse Functional Identities and Frobenius Collision Obstructions},
  pdfauthor={Mohsen Aliabadi},
  pdfsubject={Functional identities on division rings},
  pdfkeywords={additive maps, division rings, finite fields, Frobenius maps, functional identities, generalized polynomial identities, characteristic two}
}

\numberwithin{equation}{section}

\newtheorem{theorem}{Theorem}[section]
\newtheorem{lemma}[theorem]{Lemma}
\newtheorem{proposition}[theorem]{Proposition}
\newtheorem{corollary}[theorem]{Corollary}
\newtheorem{problem}[theorem]{Problem}

\theoremstyle{definition}
\newtheorem{definition}[theorem]{Definition}
\newtheorem{example}[theorem]{Example}

\theoremstyle{remark}
\newtheorem{remark}[theorem]{Remark}

\newcommand{\F}{\mathbb F}
\newcommand{\Fp}{\mathbb F_p}
\newcommand{\Fq}{\mathbb F_q}
\newcommand{\Z}{\mathbb Z}
\newcommand{\Q}{\mathbb Q}
\newcommand{\id}{\operatorname{id}}
\newcommand{\charac}{\operatorname{char}}
\newcommand{\Sn}{\mathcal S_n}

\title[Higher-power inverse identities]{Higher-Power Inverse Functional Identities and Frobenius Collision Obstructions}

\author{Mohsen Aliabadi}
\address{Department of Mathematics, Clayton State University, Morrow, GA, USA}
\email{mohsenaliabadi@clayton.edu, mohsenmath88@gmail.com}

\subjclass[2020]{Primary 16R60; Secondary 16K40, 16R50, 12E20, 11T06}
\keywords{Additive maps, division rings, finite fields, Frobenius maps, functional identities, generalized polynomial identities, characteristic two}

\begin{document}

\begin{abstract}
Let $D$ be a division ring, let $n\geq 2$, and let $f,g:D\to D$ be additive maps satisfying
\[
        f(x)x^{-1}+x^n g(x^{-1})=0\qquad (x\in D^\times).
\]
We establish general vanishing criteria and classify the Frobenius-type obstructions over fields.  If $\F_q\subseteq Z(D)$, every additive map $D\to D$ admits a canonical decomposition into $\F_q^\times$-weight components, and the identity pairs precisely the weights $r,s$ satisfying $r+s\equiv n+1\pmod{q-1}$.  Consequently, for $q=p^m$, the $\F_q$-vector space $\mathcal S_n(\F_q)$ of solutions over $\F_q$ satisfies
\[
        \dim_{\F_q}\mathcal S_n(\F_q)
        =\#\{(i,j):0\leq i,j<m,\ p^i+p^j\equiv n+1\pmod{q-1}\}.
\]
Over an infinite field of characteristic $p$, the additive-polynomial solutions are exactly the sums of paired Frobenius terms with $p^i+p^j=n+1$.  Prime-field dilation gives complete vanishing in characteristic zero and, in characteristic $p>0$, whenever $(p-1)\nmid(n-1)$.

In characteristic two, a generalized-polynomial reduction and an inverse-free identity yield complete vanishing for $n=2$ on every noncommutative division ring.  More generally, if $[D:Z(D)]=\infty$, every solution vanishes when the center is infinite.  If $Z(D)=\F_q$ is finite, a graded refinement proves the same conclusion for $2\leq n\leq q-1$.  The remaining finite-center and centrally finite cases are isolated explicitly.
\end{abstract}

\maketitle

\section{Introduction}

Let $D$ be a division ring.  We write $D^\times=D\setminus\{0\}$ and $Z(D)$ for the center of $D$.  A map $h:D\to D$ is called \emph{additive} if
\[
        h(x+y)=h(x)+h(y)\qquad (x,y\in D).
\]
Thus $h$ is a homomorphism of the underlying additive group.  It is not assumed to be multiplicative, $Z(D)$-linear, or semilinear.  In particular, central scalars cannot usually be moved through $h$, except for scalars from the prime field or under additional hypotheses.

Functional identities on rings and division rings are closely related to generalized polynomial identities, derivations, elementary operators, and structural properties of prime and semiprime rings.  We refer to Bre\v sar, Chebotar, and Martindale \cite{BCM} for background on functional identities, and to Lam \cite{Lam} for standard facts on division rings and central simple algebras.

Several recent works study inverse-type functional identities on division rings.  Catalano studied additive maps satisfying identities involving inverses on division rings and simple artinian rings \cite{Catalano}.  Catalano and Merch\'an treated rational identities of the form
\[
        f(x)+x^m g(x^{-1})=0
\]
for particular exponents and characteristic assumptions \cite{CatalanoMerchan}.  Lee and Lin subsequently used generalized polynomial identities to study this family on noncommutative division rings \cite{LeeLin}.  Ero\u glu, Lee, and Lin treated the corresponding characteristic-two problem \cite{ELL}.

The placement of the factors in the identity studied below is different and essential: right-multiplication produces a terminal factor \(x\) that cannot be commuted past the value of an additive map.  Thus the results for \(f(x)+x^m g(x^{-1})=0\) do not apply directly.  For the identity considered here, Catalano, Leavens, and Merch\'an established vanishing for the two cases
\[
        f(x)x^{-1}+x^2g(x^{-1})=0,
        \qquad
        f(x)x^{-1}+x^3g(x^{-1})=0,
\]
when the characteristic is different from \(2\) and \(3\) \cite{CLM}.  Corollary~\ref{cor:even-n} removes the characteristic-three exclusion when \(n=2\), and Theorem~\ref{thm:n2-char-two} settles the remaining characteristic-two case.  Together, these results complete the \(n=2\) problem over all noncommutative division rings.  The later sections establish higher-power vanishing theorems in characteristic two under hypotheses on the center and the central dimension.

We study the following higher-power inverse identity.

\begin{problem}\label{prob:main}
Let $D$ be a division ring, let $n\geq 2$, and let $f,g:D\to D$ be additive maps satisfying
\begin{equation}\label{eq:main}
        f(x)x^{-1}+x^n g(x^{-1})=0\qquad (x\in D^\times).
\end{equation}
When does one have $f=g=0$?
\end{problem}

The answer depends strongly on characteristic and on commutativity.  Over fields of positive characteristic, Frobenius powers give genuine nonzero solutions.  Therefore a correct general statement must distinguish vanishing mechanisms from Frobenius-type obstructions.  The goal of this paper is to isolate those obstructions explicitly.

A basic warning is that noncommutativity changes the form of the identity.  Multiplying \eqref{eq:main} on the right by $x$ gives
\begin{equation}\label{eq:right-multiplied}
        f(x)=-x^n g(x^{-1})x.
\end{equation}
In a noncommutative division ring this is not the same as
\[
        f(x)=-x^{n+1}g(x^{-1}).
\]
The final factor $x$ in \eqref{eq:right-multiplied} cannot be moved past $g(x^{-1})$.  This is one reason why the finite-field Frobenius examples do not automatically extend to noncommutative division rings.

\subsection*{Main results}

Our first main result, Theorem~\ref{thm:canonical-weight-pairing}, is a canonical decomposition for arbitrary additive maps over a finite central subfield.  If $\F_q\subseteq Z(D)$, every additive map decomposes into $\F_q^\times$-weights, without any semilinearity hypothesis, and the identity couples precisely the weights $r$ and $s$ satisfying
\[
        r+s\equiv n+1\pmod{q-1}.
\]
Specializing to $D=\F_q$, where $q=p^m$, gives the exact finite-field classification in Corollary~\ref{cor:finite-field-collision-graph}.  Let $\Sn(\Fq)$ be the $\Fq$-vector space of solutions, and let $\Gamma_n(q)$ be the bipartite graph joining $i$ to $j$ when
\[
        p^i+p^j\equiv n+1\pmod{q-1}.
\]
Every vertex has degree at most one, and
\[
        \dim_{\Fq}\Sn(\Fq)=|E(\Gamma_n(q))|\leq m.
\]
Thus the finite-field obstruction is exactly a Frobenius collision condition.  The prime-field scaling argument of Theorem~\ref{thm:prime-field-scaling} further shows that if there exists $\lambda$ in the prime field of $D$ such that $\lambda^{n-1}\neq 1$, then every additive solution is zero.  In particular, characteristic zero gives automatic vanishing.

The final two sections treat characteristic two, where the elementary scaling choice $\lambda=-1$ is unavailable.  A Hua-type argument shows that, over a division ring infinite-dimensional over its center, every solution reduces to a one-map identity $h=f=g$.  The first new ingredient after this reduction is an inverse-free identity expressing $h(x^2)$ in terms of $h(x)$ and $h(x+1)$.  For $n=2$, its linearization yields a cocycle identity involving additive commutators; Theorem~\ref{thm:n2-char-two} then proves that every solution over a noncommutative division ring is zero.  For general $n$, central rescaling forces
\[
        h(\lambda^2x)=\lambda^{n+1}h(x)
        \qquad (\lambda\in Z(D)^\times).
\]
If $Z(D)$ is infinite, this relation proves complete vanishing: directly when $n+1$ is not a power of two, and, in the power-of-two case, after the inverse-free identity is used to show that $h$ is central-valued.  Corollary~\ref{cor:char-two-infinite-center} therefore gives $f=g=0$ for every $n\geq2$ whenever $[D:Z(D)]=\infty$ and $Z(D)$ is infinite.

The finite-center case requires a different separation argument.  When $Z(D)=\F_q$ and $[D:Z(D)]=\infty$, we rewrite the inverse-free identity as a sum of homogeneous brackets and separate their degrees modulo $q-1$.  For $2\leq n\leq q-1$, the degree-one bracket is isolated.  It forces either immediate vanishing or central-valuedness; in the latter case, a Frobenius-root argument and the bounded-degree theorem finish the proof.  Theorem~\ref{T:main} consequently gives complete vanishing whenever
\[
        Z(D)=\F_q,\qquad [D:Z(D)]=\infty,\qquad 2\leq n\leq q-1.
\]

\section{Preliminaries}

\begin{definition}
The \emph{prime field} of a division ring $D$ is the smallest subfield of $D$.  It is isomorphic to $\Q$ if $\charac D=0$, and to $\Fp$ if $\charac D=p>0$.  Since it is generated by $1_D$, it is contained in $Z(D)$.
\end{definition}

\begin{lemma}\label{lem:prime-field-linearity}
Let $D$ be a division ring with prime field $P$, and let $h:D\to D$ be additive.  Then
\[
        h(\lambda x)=\lambda h(x)
\]
for every $\lambda\in P$ and every $x\in D$.
\end{lemma}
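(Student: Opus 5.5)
The plan is to build up from the integers to the whole prime field $P$, using only additivity of $h$. Since $P$ is generated by $1_D$, every element of $P$ is either of the form $k\cdot 1_D$ with $k\in\Z$ (characteristic $p>0$, where $P\cong\Fp$) or of the form $(a\cdot 1_D)(b\cdot 1_D)^{-1}$ with $a,b\in\Z$, $b\neq 0$ (characteristic zero, where $P\cong\Q$). It therefore suffices to treat integer multiples first and then quotients.

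\emph{Integers.} From $h(0)=h(0+0)=2h(0)$ we get $h(0)=0$. Then $h(-x)=-h(x)$ follows from $h(x)+h(-x)=h(0)$. An induction on $k\geq 0$ using $h((k+1)x)=h(kx)+h(x)$ gives $h(kx)=kh(x)$, and this extends to negative $k$. Since $k x=(k\cdot 1_D)x$ and $k h(x)=(k\cdot 1_D)h(x)$, this says $h(\lambda x)=\lambda h(x)$ for every $\lambda$ in the image of $\Z$. In characteristic $p$ that image is all of $P$, so this case is finished.

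\emph{Characteristic zero.} Take $\lambda=a b^{-1}$ with $a,b\in\Z\cdot 1_D$ and $b\neq0$. Put $y=b^{-1}x$; this makes sense because $b$ is a nonzero element of the division ring. By the integer case, $b\,h(y)=h(by)=h(x)$. Because $b$ is central and invertible, this gives $h(b^{-1}x)=b^{-1}h(x)$. Applying the integer case once more,
\[
h(ab^{-1}x)=a\,h(b^{-1}x)=ab^{-1}h(x).
\]

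There is no real obstacle here. The only point needing care is division by $b$, and it is justified because $b$ lies in the center and is invertible, so $b^{-1}$ can be pulled out on either side with no commutativity issues.
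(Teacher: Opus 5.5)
Your proof is correct and follows essentially the same route as the paper: additivity gives integer homogeneity, which already covers characteristic $p$, and in characteristic zero you reduce $\lambda=a/b$ to the integer case. The only cosmetic difference is how you cancel $b$: you multiply by the central inverse $b^{-1}$, whereas the paper writes $b\,h(\lambda x)=b\lambda\,h(x)$ and cancels using torsion-freeness of $(D,+)$, which amounts to the same fact.
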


\begin{proof}
If $\charac D=p>0$, then $P\cong \Fp$, and the assertion follows from additivity.  If $\charac D=0$, then $P\cong\Q$.  Additivity gives $h(mx)=mh(x)$ for every integer $m$.  If $\lambda=a/b\in\Q$ with $b\neq0$, then
\[
        b h(\lambda x)=h(b\lambda x)=h(ax)=a h(x)=b\lambda h(x).
\]
The additive group of $D$ is torsion-free in characteristic zero, so $h(\lambda x)=\lambda h(x)$.
\end{proof}

\begin{definition}\label{def:gpi-algebra}
Let $D$ be a division ring with center $Z=Z(D)$.  The \emph{generalized polynomial algebra} over $D$ in noncommuting variables $X_1,\ldots,X_r$ is the free product of $Z$-algebras
\[
        D *_Z Z\langle X_1,\ldots,X_r\rangle,
\]
where the two copies of $Z$ are identified.  Equivalently, its elements are finite $Z$-linear combinations of words in which coefficients from $D$ and the variables $X_i$ alternate, subject only to the relations already holding in $D$ and to the centrality of $Z$.
\end{definition}

\begin{definition}\label{def:gpi}
A nonzero element $F\in D *_Z Z\langle X_1,\ldots,X_r\rangle$ is called a \emph{generalized polynomial identity}, or \emph{GPI}, for $D$ if
\[
        F(a_1,\ldots,a_r)=0
\]
for every $a_1,\ldots,a_r\in D$.
\end{definition}

\begin{theorem}\label{thm:martindale}
If a division ring $D$ satisfies a nontrivial generalized polynomial identity over its center, then $D$ is finite-dimensional over $Z(D)$.
\end{theorem}

\begin{proof}
This is the division-ring case of Martindale's theorem on prime rings satisfying generalized polynomial identities; see \cite{Martindale}.  We also use the standard fact that the extended centroid of a division ring is its center; see \cite[Chapter~2]{BMM}.
\end{proof}

We shall also use the following standard linear-independence lemma for elementary operators; see, for example, \cite{BMM}.

\begin{lemma}\label{lem:elementary-operators}
Let $D$ be a division ring with center $Z$.  Suppose $a_1,\ldots,a_r\in D$ are linearly independent over $Z$ and $b_1,\ldots,b_r\in D$ satisfy
\[
        \sum_{i=1}^r a_i x b_i=0
        \qquad (x\in D).
\]
Then $b_1=\cdots=b_r=0$.
\end{lemma}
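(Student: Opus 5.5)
Induction on $r$ is the natural route. For $r=1$ the hypothesis reads $a_1xb_1=0$ for all $x\in D$, and $a_1\neq0$ because a linearly independent set cannot contain $0$. Taking $x=a_1^{-1}$ gives $b_1=0$.

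For $r\ge2$, assume the lemma holds for fewer than $r$ terms and suppose, for contradiction, that some $b_i\neq0$. First we normalize. If $b_r=0$, the relation involves only $a_1,\ldots,a_{r-1}$, which are still linearly independent, so the induction hypothesis gives all $b_i=0$. We may therefore assume $b_r\neq0$. Replacing $x$ by $xb_r^{-1}$ turns the relation into
\[
\sum_i a_ix(b_r^{-1}b_i)=0,
\]
so we may assume $b_r=1$.

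Next we eliminate the last term by a commutator trick. Substitute $xy$ for $x$, with $y\in D$ arbitrary, to get $\sum_i a_ixyb_i=0$. Multiply the original relation on the right by $y$ to get $\sum_i a_ixb_iy=0$. Subtracting the two gives
\[
\sum_{i=1}^{r-1}a_ix\,(yb_i-b_iy)=0\qquad(x,y\in D),
\]
since the $i=r$ term is $a_ix(y-y)=0$. For fixed $y$, the induction hypothesis applied to the independent elements $a_1,\ldots,a_{r-1}$ yields $yb_i=b_iy$ for all $y$. Hence every $b_i$ lies in $Z$.

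Finally we use independence. With $b_i\in Z$, the relation becomes
\[
\Bigl(\sum_i b_ia_i\Bigr)x=0\qquad(x\in D).
\]
Setting $x=1$ gives $\sum_i b_ia_i=0$, a $Z$-linear relation among the $a_i$ with coefficient $b_r=1\neq0$. This contradicts the linear independence of $a_1,\ldots,a_r$ over $Z$, so all $b_i=0$.

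The only delicate point is the normalization $b_r=1$ combined with the substitution $x\mapsto xy$. This is what turns the problem into commutators and lets the induction apply, so I would check that step carefully. The remaining steps are routine.
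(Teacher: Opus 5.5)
Your proof is correct and complete. The paper gives no proof of this lemma; it cites the monograph of Beidar--Martindale--Mikhalev, where the statement is the division-ring case of Martindale's lemma for prime rings (independence is taken over the extended centroid, which for a division ring is $Z$).

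Your route is the classical self-contained argument for division rings. Rescaling by $b_r^{-1}$ to get $b_r=1$, killing the last term by comparing $x\mapsto xy$ with right multiplication by $y$, and evaluating at $x=1$ all use invertibility in $D$. Nothing beyond the definition of $Z$ is needed.

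The standard prime-ring proof cannot rescale. Instead it eliminates the last term via
\[
\sum_{i=1}^{r-1}a_ix\,(b_iyb_r-b_ryb_i)=0,
\]
concludes $b_iyb_r=b_ryb_i$ for all $y$, and then needs the extended centroid to write $b_i=\lambda_ib_r$. That generality is what the citation buys, at the cost of machinery your proof avoids.

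The step you flagged is fine. Both substitutions only evaluate a universally quantified identity at particular points. Since you argue by contradiction, you never need to carry the conclusion back from $b_r^{-1}b_i$ to $b_i$.

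Your argument would make the paper's use of this lemma in Theorem~\ref{thm:n2-char-two} fully self-contained.
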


We also recall the bounded-degree theorem for division algebras.

\begin{theorem}[Jacobson \cite{JacobsonBounded}]\label{thm:bounded-degree}
Let $D$ be a division ring with center $Z$.  If there is an integer $d\geq1$ such that every element of $D$ is algebraic over $Z$ of degree at most $d$, then $D$ is finite-dimensional over $Z$.
\end{theorem}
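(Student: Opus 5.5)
The plan is to produce a nontrivial polynomial identity for $D$ from the degree bound, and then invoke Theorem~\ref{thm:martindale}. (A classical alternative is Kaplansky's theorem on primitive PI-rings.) The tool is the Capelli polynomial
\[
        C_{k}(U_1,\ldots,U_{k};Y_1,\ldots,Y_{k-1})
        =\sum_{\sigma\in S_k}\operatorname{sgn}(\sigma)\,
        U_{\sigma(1)}Y_1U_{\sigma(2)}Y_2\cdots Y_{k-1}U_{\sigma(k)} .
\]
This polynomial is multilinear in the $U_i$ and alternating in them. Consequently, whenever the arguments $u_1,\ldots,u_k$ are linearly dependent over $Z$, its value vanishes for all $y_j\in D$.

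The first step is to fix $k=d+1$. For any $x\in D$, the hypothesis says $x$ is algebraic over $Z$ of degree at most $d$. Hence the $d+1$ elements $1,x,x^2,\ldots,x^d$ are $Z$-linearly dependent. Substituting them into the Capelli polynomial gives
\[
        F(X,Y_1,\ldots,Y_d)=\sum_{\sigma\in S_{d+1}}\operatorname{sgn}(\sigma)\,
        X^{\sigma(0)}Y_1X^{\sigma(1)}Y_2\cdots Y_dX^{\sigma(d)},
\]
where the indices $0,\ldots,d$ are permuted. By the vanishing property, $F(x,y_1,\ldots,y_d)=0$ for all $x,y_1,\ldots,y_d\in D$.

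The second step is to check that $F$ is a nonzero element of $D*_Z Z\langle X,Y_1,\ldots,Y_d\rangle$. Here $F$ even lies in the free algebra $Z\langle X,Y_1,\ldots,Y_d\rangle$. The monomials $X^{e_0}Y_1X^{e_1}\cdots Y_dX^{e_d}$ attached to distinct permutations $\sigma$ are distinct words, because the exponent sequence $(e_0,\ldots,e_d)=(\sigma(0),\ldots,\sigma(d))$ can be read back off the word between consecutive $Y$'s. So no cancellation occurs, and $F$ has coefficients $\pm1$ on $(d+1)!$ distinct words. I expect this to be the main point requiring care: we must make sure the free product in Definition~\ref{def:gpi-algebra} contains the free algebra faithfully, so that $F\neq0$ there. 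This is standard, since $Z\langle X,\ldots\rangle$ embeds in the free product over the field $Z$.

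Finally, $F$ is therefore a nontrivial generalized polynomial identity for $D$. Theorem~\ref{thm:martindale} then yields $[D:Z]<\infty$. If one prefers to avoid Martindale's theorem, Kaplansky's theorem gives the same conclusion: a primitive ring satisfying a polynomial identity, in particular a division ring, is finite-dimensional over its center. Either route completes the proof.
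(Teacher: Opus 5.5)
Your proof is correct, and it takes a different route from the paper. The paper gives no argument for this statement: it cites Jacobson's 1945 structure theory of algebraic algebras of bounded degree.

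You instead turn the degree bound into the explicit identity $F=\sum_{\sigma}\operatorname{sgn}(\sigma)X^{\sigma(0)}Y_1X^{\sigma(1)}\cdots Y_dX^{\sigma(d)}$, which is the classical Levitzki--Capelli observation. You then apply Theorem~\ref{thm:martindale} or Kaplansky's theorem.

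The delicate points are handled correctly:
\begin{itemize}
\item The Capelli polynomial is $Z$-multilinear because central scalars pull out.
\item It vanishes on a repeated argument in every characteristic, including two, which matters in this paper. The terms for $\sigma$ and $(i\,j)\sigma$ cancel as $t-t$.
\item $F\neq0$ because the exponent of $X$ between $Y_j$ and $Y_{j+1}$ recovers $\sigma(j)$. So the $(d+1)!$ words are distinct and carry coefficients $\pm1$.
\end{itemize}
There is also no hidden circularity. The standard proof of Kaplansky's theorem applies the density theorem to $D\otimes_Z K$ for a maximal subfield $K$ and does not use bounded degree.

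What your route buys is economy. The paper already depends on Martindale's theorem, and you show that Theorem~\ref{thm:bounded-degree} is a special case of it. In fact the paper invokes Theorem~\ref{thm:bounded-degree} in two places: the end of Theorem~\ref{thm:char-two-infinite-center-power}, where $x^{N}\in Z$ for all $x$, and Theorem~\ref{T:odd}, where $x^{N_0}\in Z$ for all $x$. In both, the polynomial $X^{N}Y-YX^{N}$ (respectively with $N_0$) is already a nonzero polynomial identity, so Jacobson's theorem could be bypassed entirely.

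What the citation buys is that those steps rest on a classical result about algebraic algebras rather than on generalized PI theory. Also, since your $F$ has integer coefficients, Martindale's GPI theorem is more than you need; Kaplansky's theorem is the natural tool.
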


\section{Central weights and Frobenius solutions}

We first record the basic positive-characteristic obstruction over fields and then develop a canonical weight decomposition over finite central subfields.

\begin{example}\label{ex:frobenius-basic}
Let $F$ be a field of characteristic $p>0$, and suppose
\[
        n+1=p^a+p^b
\]
for some integers $a,b\geq0$.  For any $c\in F$, define
\[
        g(x)=c x^{p^a},
        \qquad
        f(x)=-c x^{p^b}.
\]
Then $f$ and $g$ are additive maps $F\to F$.  For $x\in F^\times$,
\[
        f(x)x^{-1}+x^n g(x^{-1})
        =-c x^{p^b-1}+c x^{n-p^a}=0,
\]
because $p^b-1=n-p^a$.  Thus nonzero Frobenius solutions exist whenever $n+1$ is a sum of two $p$-powers.
\end{example}

The finite-field classification below shows that, over finite fields, these are exactly the possible obstructions, with equality of exponents read modulo $q-1$.  We first prove a more general division-ring decomposition that explains this congruence intrinsically.

\subsection{Canonical weights over finite central subfields}

Let $D$ have characteristic $p>0$ and let $\F_q\subseteq Z(D)$.  We identify a residue $r\in\Z/(q-1)\Z$ with any integer representative when it occurs as an exponent.

\begin{lemma}\label{lem:canonical-weight-decomposition}
Every additive map $u:D\to D$ has a unique decomposition
\[
        u=\sum_{r\in\Z/(q-1)\Z}u_r,
\]
where
\begin{equation}\label{eq:weight-projector}
        u_r(x)=-\sum_{\lambda\in\F_q^\times}
        \lambda^{-r}u(\lambda x)
        \qquad (x\in D)
\end{equation}
and
\begin{equation}\label{eq:weight-homogeneity}
        u_r(\lambda x)=\lambda^r u_r(x)
        \qquad (\lambda\in\F_q^\times,\ x\in D).
\end{equation}
In particular,
\[
        \sum_r u_r=-(q-1)u=u,
\]
where the last equality holds because $(q-1)1_D=-1_D$ in characteristic $p$.
\end{lemma}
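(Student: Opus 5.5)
This is a discrete Fourier analysis on the finite cyclic group $\F_q^\times$, which acts on additive maps through central scalars. The only inputs are character orthogonality in $\F_q$ and additivity of $u$. The key identity is
\[
        \sum_{\lambda\in\F_q^\times}\lambda^{k}=
        \begin{cases} q-1=-1, & k\equiv 0\pmod{q-1},\\ 0, & \text{otherwise.}\end{cases}
\]
It holds because $\F_q^\times$ is cyclic of order $q-1$: if $k\not\equiv0$, choose a generator $\gamma$ with $\gamma^k\neq1$; multiplying the sum by $\gamma^k$ permutes its terms, so the sum is zero. These are elements of $\F_q\subseteq Z(D)$, so the identity holds inside $D$.

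First I will check that $u_r$ is well defined for $r\in\Z/(q-1)\Z$, since $\lambda^{-r}$ depends only on $r$ mod $q-1$. Next I will verify \eqref{eq:weight-homogeneity}. For $\mu\in\F_q^\times$, substitute $\lambda\mapsto\lambda\mu^{-1}$, which is a bijection of $\F_q^\times$. This gives
\[
        u_r(\mu x)=-\sum_{\lambda}\lambda^{-r}u(\lambda\mu x)=-\sum_{\lambda'}(\lambda'\mu^{-1})^{-r}u(\lambda' x)=\mu^r u_r(x),
\]
using that $\mu^r$ is central. Then I will compute the sum of the components:
\[
        \sum_r u_r(x)=-\sum_{\lambda}\Bigl(\sum_r\lambda^{-r}\Bigr)u(\lambda x).
\]
Applying the orthogonality relation to the sum over $r$ at fixed $\lambda$ gives an inner sum of $0$ unless $\lambda=1$, and $q-1=-1$ when $\lambda=1$. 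So the total is $u(x)$. This also matches the stated form $-(q-1)u=u$: only the $\lambda=1$ term survives, contributing $-(q-1)u(x)$.

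For uniqueness, suppose $u=\sum_r v_r$ with each $v_r$ additive and satisfying \eqref{eq:weight-homogeneity}. I will apply the projector \eqref{eq:weight-projector} to this decomposition. The projector is $\Z$-linear in $u$, so it suffices to compute it on a single $v_s$:
\[
        -\sum_\lambda\lambda^{-r}v_s(\lambda x)=-\Bigl(\sum_\lambda\lambda^{s-r}\Bigr)v_s(x)=\delta_{rs}v_s(x).
\]
Hence $u_r=v_r$. Additivity of each $u_r$ is immediate, since it is a finite sum of additive maps multiplied on the left by central scalars.

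No step here is a real obstacle. The one point needing care is that $u$ is only additive and not $\F_q$-linear. Every manipulation must therefore keep the scalar $\lambda$ inside the argument of $u$ and move only the coefficients $\lambda^{\pm r}$, which are central, outside. The orthogonality sums are evaluated in $\F_q\subseteq Z(D)$, so the characteristic-$p$ reduction $q-1=-1$ is legitimate there.
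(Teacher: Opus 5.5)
Your proposal is correct and follows the paper's proof step for step. A change of variable in the projector gives the weight-$r$ homogeneity, character orthogonality on the cyclic group $\F_q^\times$ gives $\sum_r u_r=-(q-1)u=u$, and applying the projector to a weight-$s$ component returns $\delta_{rs}$ times that component, which gives uniqueness. One small point: the reconstruction step needs the dual relation $\sum_{r}\lambda^{-r}=0$ for $\lambda\neq1$, not the sum over $\lambda$ that you proved. That relation is immediate, since it is a geometric series that vanishes because $\lambda^{q-1}=1$.
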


\begin{proof}
Each $u_r$ is additive, and a change of variable $\mu=\lambda\lambda_0$ in \eqref{eq:weight-projector} gives
\[
        u_r(\lambda_0x)=\lambda_0^r u_r(x).
\]
The character orthogonality relation
\[
        \sum_{r\in\Z/(q-1)\Z}\lambda^{-r}
        =
        \begin{cases}
        q-1,&\lambda=1,\\
        0,&\lambda\neq1
        \end{cases}
\]
gives $\sum_r u_r=-(q-1)u=u$.  The same relation shows that the operator in \eqref{eq:weight-projector} fixes a component of weight $r$ and annihilates every component of a different weight; hence the decomposition is unique.
\end{proof}

\begin{theorem}\label{thm:canonical-weight-pairing}
Let $D$ be a division ring of characteristic $p>0$ with $\F_q\subseteq Z(D)$, and let $f,g:D\to D$ be additive maps satisfying \eqref{eq:main}.  Write their canonical decompositions as
\[
        f=\sum_{r\in\Z/(q-1)\Z}f_r,
        \qquad
        g=\sum_{s\in\Z/(q-1)\Z}g_s.
\]
Then, for every $r\in\Z/(q-1)\Z$ and $x\in D^\times$,
\begin{equation}\label{eq:canonical-weight-pairing}
        f_r(x)x^{-1}
        +x^n g_{n+1-r}(x^{-1})=0.
\end{equation}
Thus the original identity decomposes canonically into independent identities pairing precisely the weights $r$ and $s$ for which
\[
        r+s\equiv n+1\pmod{q-1}.
\]
\end{theorem}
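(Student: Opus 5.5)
The plan is to apply the weight projector of Lemma~\ref{lem:canonical-weight-decomposition} directly to the identity \eqref{eq:main}, after first rescaling the variable by central scalars. Fix $x\in D^\times$ and $\lambda\in\F_q^\times$. Replacing $x$ by $\lambda x$ in \eqref{eq:main} is legitimate because $\lambda x\in D^\times$. Since $\lambda$ is central, $(\lambda x)^{-1}=\lambda^{-1}x^{-1}$ and $(\lambda x)^n=\lambda^n x^n$, so we obtain
\[
        f(\lambda x)\lambda^{-1}x^{-1}+\lambda^n x^n g(\lambda^{-1}x^{-1})=0 .
\]
Multiplying by $\lambda$, which is central, gives
\[
        f(\lambda x)x^{-1}+\lambda^{n+1}x^n g(\lambda^{-1}x^{-1})=0
        \qquad(\lambda\in\F_q^\times).
\]
The right factor $x^{-1}$ is untouched throughout, so noncommutativity causes no difficulty: only central scalars are moved.

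Next, fix $r\in\Z/(q-1)\Z$. Multiply the displayed relation by $-\lambda^{-r}$ and sum over $\lambda\in\F_q^\times$. The first term becomes $f_r(x)x^{-1}$ by \eqref{eq:weight-projector}. In the second term we substitute $\mu=\lambda^{-1}$, which permutes $\F_q^\times$. Then $\lambda^{n+1-r}=\mu^{-(n+1-r)}$, and the second term becomes
\[
        x^n\Bigl(-\sum_{\mu\in\F_q^\times}\mu^{-(n+1-r)}g(\mu x^{-1})\Bigr)=x^n g_{n+1-r}(x^{-1}),
\]
again by \eqref{eq:weight-projector}. Here central scalars pass through $x^n$, and the exponent depends only on its residue mod $q-1$ because $\mu^{q-1}=1$. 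This yields \eqref{eq:canonical-weight-pairing}.

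For the final sentence, we read \eqref{eq:canonical-weight-pairing} as a family of identities indexed by $r$. Each identity involves exactly one weight component of $f$ and one of $g$, and the index map $r\mapsto n+1-r$ is a bijection. Summing over $r$ recovers \eqref{eq:main}, because $\sum_r f_r=f$ and $\sum_s g_s=g$. Conversely, the uniqueness part of Lemma~\ref{lem:canonical-weight-decomposition} shows that the pairing is canonical.

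The main point needing care is bookkeeping of exponents modulo $q-1$ under the inversion $\mu=\lambda^{-1}$, and checking that every scalar moved is central. No step seems genuinely hard.
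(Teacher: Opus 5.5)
Your proof is correct and is essentially the paper's argument: dilate by $\lambda\in\F_q^\times$ and isolate a weight by character orthogonality. The only difference is that you apply the projector \eqref{eq:weight-projector} directly to the rescaled identity (via $\mu=\lambda^{-1}$), whereas the paper first expands $f,g$ into homogeneous components using \eqref{eq:weight-homogeneity} and then extracts the coefficient of $\lambda^{r-1}$ by linear independence of characters; your added remark that summing over $r$ recovers \eqref{eq:main} usefully makes the equivalence of the paired family with the original identity explicit.
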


\begin{proof}
Fix $x\in D^\times$ and apply \eqref{eq:main} to $\lambda x$ for $\lambda\in\F_q^\times$.  Centrality of $\lambda$ and \eqref{eq:weight-homogeneity} give
\[
        \sum_r\lambda^{r-1}f_r(x)x^{-1}
        +\sum_s\lambda^{n-s}x^n g_s(x^{-1})=0.
\]
The characters $\lambda\mapsto\lambda^t$ of the cyclic group $\F_q^\times$ are linearly independent over $\F_q$.  Equating the coefficient of the character $\lambda^{r-1}$ yields \eqref{eq:canonical-weight-pairing}, since $n-s\equiv r-1$ exactly when $s\equiv n+1-r$ modulo $q-1$.
\end{proof}

\subsection{Finite fields and the collision graph}\label{sec:finite-fields}

Let $q=p^m$.  Let $\Sn(\Fq)$ denote the set of all pairs $(f,g)$ of additive maps $\Fq\to\Fq$ satisfying \eqref{eq:main}.  Since $\Fq$ is commutative, pointwise $\Fq$-scalar multiples of solutions are again solutions; hence $\Sn(\Fq)$ is an $\Fq$-vector space.

\begin{lemma}\label{lem:linearized}
Every additive map $h:\Fq\to\Fq$ is represented uniquely in the form
\[
        h(x)=\sum_{i=0}^{m-1}c_i x^{p^i},
        \qquad c_i\in\Fq.
\]
\end{lemma}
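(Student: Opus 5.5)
The plan is a dimension count over the prime field $\Fp$. First I will show that every map of the stated form is additive and lies in the right space. Then I will show these maps are pairwise distinct, and finally I will compare cardinalities. By Lemma~\ref{lem:prime-field-linearity}, or directly from additivity in characteristic $p$, an additive map $h:\Fq\to\Fq$ is the same thing as an $\Fp$-linear endomorphism of $\Fq$, where $\Fq$ is viewed as an $m$-dimensional $\Fp$-vector space. Hence the set of additive maps $\Fq\to\Fq$ is $\operatorname{End}_{\Fp}(\Fq)$, which has $p^{m^2}=q^m$ elements.

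Next I will check that each $x\mapsto\sum_{i=0}^{m-1}c_ix^{p^i}$ with $c_i\in\Fq$ is additive. This holds because $x\mapsto x^{p^i}$ is a ring endomorphism of $\Fq$ (the $i$th power of Frobenius), and multiplication by $c_i$ is additive. The assignment $(c_0,\dots,c_{m-1})\mapsto h$ is therefore a map from $\Fq^m$, a set of size $q^m$, into the set of additive maps, also of size $q^m$. It is $\Fq$-linear in the coefficients, so it suffices to prove injectivity; surjectivity then follows by counting.

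For injectivity, suppose $\sum_{i=0}^{m-1}c_ix^{p^i}=0$ for all $x\in\Fq$. The polynomial $P(X)=\sum_i c_iX^{p^i}\in\Fq[X]$ has degree at most $p^{m-1}<q$ unless it is the zero polynomial, yet it vanishes at all $q$ elements of $\Fq$. Therefore $P=0$ as a polynomial, and since the exponents $p^0,\dots,p^{m-1}$ are distinct, every $c_i=0$. This gives both existence and uniqueness.

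The only step needing care is the degree comparison $p^{m-1}<p^m$, which is immediate. An alternative route is Artin's independence of the distinct characters $\sigma^i$ of $\Fq^\times$, where $\sigma$ is Frobenius. The counting argument avoids that and is self-contained, so I expect no real obstacle.
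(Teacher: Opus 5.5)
Your proof is correct and follows the paper's route: both identify additive maps with $\operatorname{End}_{\Fp}(\Fq)$ and prove uniqueness by the same root count, $\deg\le p^{m-1}<q$. The only difference is existence: the paper cites Lidl--Niederreiter, whereas you derive it from injectivity and the cardinality match $|\Fq^m|=q^m=p^{m^2}=|\operatorname{End}_{\Fp}(\Fq)|$, which makes your argument self-contained.
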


\begin{proof}
An additive map $h:\Fq\to\Fq$ is precisely an $\Fp$-linear endomorphism of the $m$-dimensional $\Fp$-vector space $\Fq$.  It is standard that every such endomorphism is represented by a linearized polynomial
\[
        L(x)=\sum_{i=0}^{m-1}c_i x^{p^i};
\]
see \cite[Chapter 3]{LidlNiederreiter}.  For uniqueness, suppose $\sum_{i=0}^{m-1}c_i x^{p^i}=0$ for every $x\in\Fq$.  If the polynomial is nonzero, its degree is at most $p^{m-1}<q$, so it has fewer than $q$ roots, a contradiction.
\end{proof}

\begin{definition}\label{def:collision-graph}
Let $q=p^m$ and $n\geq2$.  The \emph{Frobenius collision graph} $\Gamma_n(q)$ is the bipartite graph with left vertex set
\[
        L=\{0,1,\ldots,m-1\}
\]
corresponding to the coefficients of $g$, and right vertex set
\[
        R=\{0,1,\ldots,m-1\}
\]
corresponding to the coefficients of $f$.  We join $i\in L$ to $j\in R$ if
\begin{equation}\label{eq:collision-congruence}
        p^i+p^j\equiv n+1\pmod {q-1}.
\end{equation}
\end{definition}

\begin{corollary}\label{cor:finite-field-collision-graph}
Let $q=p^m$ and $n\geq2$.  Every vertex of $\Gamma_n(q)$ has degree at most one; equivalently, $\Gamma_n(q)$ is a matching.  Moreover,
\[
        \dim_{\Fq}\Sn(\Fq)
        =|E(\Gamma_n(q))|
        =
        \#\left\{(i,j):0\leq i,j\leq m-1,\
        p^i+p^j\equiv n+1\pmod {q-1}\right\}.
\]
More explicitly, write
\[
        g(x)=\sum_{i=0}^{m-1}a_i x^{p^i},
        \qquad
        f(x)=\sum_{j=0}^{m-1}b_j x^{p^j}.
\]
Then $(f,g)\in\Sn(\Fq)$ if and only if the following conditions hold:
\begin{enumerate}[label=\textup{(\roman*)}]
\item if a left vertex $i$ is isolated in $\Gamma_n(q)$, then $a_i=0$;
\item if a right vertex $j$ is isolated in $\Gamma_n(q)$, then $b_j=0$;
\item if $i\in L$ and $j\in R$ are joined by an edge, then $a_i+b_j=0$.
\end{enumerate}
Consequently, each edge contributes exactly one free scalar.
\end{corollary}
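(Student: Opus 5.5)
Write $f$ and $g$ in linearized form using Lemma~\ref{lem:linearized}, substitute into \eqref{eq:main}, and reduce everything to a comparison of coefficients of monomials $x^e$ with $e$ read modulo $q-1$.

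For $x\in\Fq^\times$ we have $x^{-1}=x^{q-2}$, so $g(x^{-1})=\sum_i a_i x^{-p^i}$ and
\[
        f(x)x^{-1}+x^n g(x^{-1})
        =\sum_{j=0}^{m-1} b_j x^{p^j-1}+\sum_{i=0}^{m-1} a_i x^{n-p^i}.
\]
The identity \eqref{eq:main} holds on $\Fq^\times$ iff this Laurent polynomial vanishes on the cyclic group $\Fq^\times$. The characters $x\mapsto x^t$ with $t\in\Z/(q-1)\Z$ are linearly independent over $\Fq$, which is the same fact used in Theorem~\ref{thm:canonical-weight-pairing}. So the condition is that, for each residue $t$, the sum of coefficients of exponents congruent to $t$ is zero. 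Equivalently, one can apply Theorem~\ref{thm:canonical-weight-pairing} directly: the monomial $x^{p^i}$ is the weight-$p^i$ component.

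Next, identify which exponents collide.

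\textbf{Within $f$.} The residues $p^j-1$ for $0\le j\le m-1$ are pairwise distinct modulo $q-1$. Indeed $1\le p^j\le p^{m-1}<q-1$, except in the degenerate case $q=2$, where $m=1$ and there is only one index.

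\textbf{Within $g$.} The residues $n-p^i$ are likewise pairwise distinct.

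\textbf{Across $f$ and $g$.} A collision $p^j-1\equiv n-p^i$ holds exactly when $p^i+p^j\equiv n+1 \pmod{q-1}$, i.e.\ exactly when $\{i,j\}$ is an edge of $\Gamma_n(q)$.

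It follows that each right vertex $j$ has at most one neighbour: if $i,i'$ were both adjacent to $j$, then $p^i\equiv p^{i'}$, forcing $i=i'$. By symmetry the same holds for left vertices, so $\Gamma_n(q)$ is a matching. Each residue class therefore contains either a single coefficient $a_i$ or $b_j$ (an isolated vertex), or exactly the pair $a_i,b_j$ (an edge). Vanishing of the class sums gives conditions (i)--(iii).

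For the dimension count, the solution set is parametrized by choosing one free scalar per edge, with all other coefficients determined. Uniqueness in Lemma~\ref{lem:linearized} makes this parametrization an $\Fq$-linear isomorphism $\Sn(\Fq)\cong\Fq^{|E|}$. The edge set is exactly the set of pairs $(i,j)$ in the displayed count, and since $\Gamma_n(q)$ is a matching, $|E|\le m$.

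The only delicate step is the distinctness of the residues $p^j$ modulo $q-1$. Here one must check the edge case $q=2$ and confirm that $p^{m-1}<q-1$ holds whenever $m\ge1$ and $q>2$. Apart from that, the argument is bookkeeping.
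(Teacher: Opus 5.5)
Your proof is correct and follows the paper's route. Both arguments use the linearized representation from Lemma~\ref{lem:linearized}, the pairwise distinctness of the residues $p^j$ modulo $q-1$, and the collision condition $p^j-1\equiv n-p^i$. Both then separate coefficients by the independence of the characters of $\Fq^\times$, which is exactly what Theorem~\ref{thm:canonical-weight-pairing} packages. The differences are cosmetic: you get distinctness from the bound $p^{m-1}<q-1$ (for $q>2$) rather than by cancelling $p^i$, and you treat the case $q=2$ and the dimension count via uniqueness of the linearized form more explicitly than the paper does.
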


\begin{proof}
By Lemma \ref{lem:linearized}, write
\[
        f(x)=\sum_{j=0}^{m-1}b_j x^{p^j},
        \qquad
        g(x)=\sum_{i=0}^{m-1}a_i x^{p^i}.
\]
The residues $p^0,p^1,\ldots,p^{m-1}$ are pairwise distinct modulo $q-1$.  Indeed, if $0\leq i<j\leq m-1$ and $p^i\equiv p^j\pmod{q-1}$, then, since $\gcd(p,q-1)=1$, cancellation of $p^i$ gives $p^{j-i}\equiv1\pmod{q-1}$.  But $0<j-i<m$, so
\[
        0<p^{j-i}-1<p^m-1=q-1,
\]
which is impossible.

Hence the residues $p^j-1$ are pairwise distinct, and the residues $n-p^i$ are pairwise distinct, modulo $q-1$.  Therefore a term from the $f$-sum can collide with at most one term from the $g$-sum, and conversely.  Such a collision is exactly
\[
        p^j-1\equiv n-p^i\pmod {q-1},
\]
or equivalently \eqref{eq:collision-congruence}.  Thus every vertex of $\Gamma_n(q)$ has degree at most one.

The monomial map $x\mapsto c x^{p^i}$ has $\Fq^\times$-weight $p^i$.  Theorem \ref{thm:canonical-weight-pairing} therefore pairs the coefficient $b_j$ of $f$ with the coefficient $a_i$ of $g$ exactly when
\[
        p^i+p^j\equiv n+1\pmod{q-1}.
\]
An edge $(i,j)$ gives $a_i+b_j=0$, whereas a coefficient attached to an isolated vertex is zero.  Each edge consequently contributes one free scalar and isolated vertices contribute none.
\end{proof}

\begin{corollary}\label{cor:finite-field-dimension-bound}
Let $q=p^m$.  Then
\[
        \dim_{\Fq}\Sn(\Fq)\leq m=\log_p q.
\]
The bound is sharp; equality holds, for example, when $(q,n)=(4,2)$.
\end{corollary}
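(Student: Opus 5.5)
The bound comes straight from Corollary~\ref{cor:finite-field-collision-graph}, and the sharpness claim will be checked by direct computation.

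\emph{The bound.} By Corollary~\ref{cor:finite-field-collision-graph}, $\dim_{\Fq}\Sn(\Fq)=|E(\Gamma_n(q))|$, and $\Gamma_n(q)$ is a matching. The left vertex set $L=\{0,\ldots,m-1\}$ has $m$ elements, and every edge has exactly one endpoint in $L$. Since each left vertex has degree at most one, the number of edges is at most $|L|=m$. This gives $\dim_{\Fq}\Sn(\Fq)\le m=\log_p q$.

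\emph{Sharpness.} Take $q=4$, so $p=2$, $m=2$ and $q-1=3$, and take $n=2$, so that $n+1=3\equiv 0\pmod 3$. We need to decide which pairs $(i,j)$ with $i,j\in\{0,1\}$ satisfy $2^i+2^j\equiv 0\pmod 3$. The possible sums are $2^0+2^0=2$, $2^0+2^1=3$, $2^1+2^0=3$ and $2^1+2^1=4\equiv1$. So the edges are exactly $(0,1)$ and $(1,0)$. That is two edges, which equals $m$, so equality holds.

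As a sanity check independent of the corollary, the explicit maps $g(x)=cx$, $f(x)=-cx^2$ and $g(x)=cx^2$, $f(x)=-cx$ can be verified directly over $\F_4$. Since $x^3=1$ on $\F_4^\times$, we get $f(x)x^{-1}+x^2g(x^{-1})=-cx+cx=0$ in the first case and $-c+cx^3\cdot x^{-3}\cdots$ reducing similarly to $0$ in the second. These two solutions are $\F_4$-independent.

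There is no real obstacle here. The only point needing care is that the matching property is what allows the edges to be counted by one side alone.
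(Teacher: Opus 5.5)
Your proposal is correct and follows the paper's argument: the bound holds because $\Gamma_n(q)$ is a matching whose left class has size $m$, and sharpness is the edge count $\{(0,1),(1,0)\}$ for $(q,n)=(4,2)$, exactly as computed in Example~\ref{ex:F4}. Your optional sanity check is sound in conclusion, but it does not need $x^3=1$, and the second computation should simply read $-c+x^2\cdot cx^{-2}=0$. Both pairs are instances of Example~\ref{ex:frobenius-basic}, since $2^0+2^1=3=n+1$ exactly.
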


\begin{proof}
The graph $\Gamma_n(q)$ is a matching on two vertex classes of size $m$, so it has at most $m$ edges.  The equality example is computed below.
\end{proof}

\begin{remark}\label{rem:partial-involution}
The edge relation is symmetric in $i$ and $j$.  Hence it defines a partial map $\sigma$ on $\{0,\ldots,m-1\}$ by $\sigma(i)=j$ whenever
\[
        p^i+p^j\equiv n+1\pmod{q-1}.
\]
The uniqueness proved above shows that $\sigma$ is a partial involution.  Its fixed points are exactly the indices $i$ satisfying
\[
        2p^i\equiv n+1\pmod{q-1}.
\]
\end{remark}

\begin{corollary}\label{cor:nonzero-finite-field}
Let $q=p^m$ and $n\geq2$.  Then $\Sn(\Fq)\neq0$ if and only if there exist $0\leq i,j\leq m-1$ such that
\[
        p^i+p^j\equiv n+1\pmod {q-1}.
\]
\end{corollary}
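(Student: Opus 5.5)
This follows directly from Corollary~\ref{cor:finite-field-collision-graph}, which gives
\[
        \dim_{\Fq}\Sn(\Fq)=|E(\Gamma_n(q))|.
\]
So $\Sn(\Fq)\neq0$ exactly when $\Gamma_n(q)$ has at least one edge. By Definition~\ref{def:collision-graph}, that means there exist $0\leq i,j\leq m-1$ with $p^i+p^j\equiv n+1\pmod{q-1}$.

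I would still make both directions explicit, so that no one has to recheck the dimension count.

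For sufficiency, suppose the congruence holds for some pair $(i,j)$. Take
\[
        g(x)=x^{p^i},\qquad f(x)=-x^{p^j}.
\]
For $x\in\Fq^\times$ we have $f(x)x^{-1}+x^ng(x^{-1})=-x^{p^j-1}+x^{n-p^i}$. Since $x^{q-1}=1$ on $\Fq^\times$ and $p^j-1\equiv n-p^i\pmod{q-1}$, the two powers coincide and the sum vanishes. The only small point is that $n-p^i$ may be negative; interpreting exponents in $\Z/(q-1)\Z$ on the cyclic group $\Fq^\times$ handles this. This pair is nonzero by the uniqueness in Lemma~\ref{lem:linearized}. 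Equivalently, it is the solution supplied by condition (iii) of Corollary~\ref{cor:finite-field-collision-graph} with $a_i=1$, $b_j=-1$ and all other coefficients zero.

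For necessity, suppose no pair satisfies the congruence. Then every vertex of $\Gamma_n(q)$ is isolated, and conditions (i) and (ii) of Corollary~\ref{cor:finite-field-collision-graph} force all $a_i$ and $b_j$ to vanish. Hence $f=g=0$ by Lemma~\ref{lem:linearized}.

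There is no real obstacle. The one point needing care is the negative-exponent interpretation in the sufficiency check, and that is resolved by working modulo $q-1$ on $\Fq^\times$.
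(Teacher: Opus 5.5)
Your proposal is correct and matches the paper, which gives no separate proof of this corollary and treats it as an immediate consequence of the formula $\dim_{\Fq}\Sn(\Fq)=|E(\Gamma_n(q))|$ in Corollary~\ref{cor:finite-field-collision-graph}. Your explicit check of both directions is a harmless supplement; nonvanishing of your sufficiency pair follows even more directly from $g(1)=1$.
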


\begin{example}\label{ex:F4}
Let $q=4$, so $p=2$, $m=2$, and $q-1=3$.  Take $n=2$.  Then $n+1\equiv0\pmod3$, and
\[
        2^i+2^j\equiv0\pmod3
\]
holds exactly for $(i,j)=(0,1)$ and $(i,j)=(1,0)$.  Hence $\dim_{\F_4}\Sn(\F_4)=2$.

The collision graph is shown in Figure \ref{fig:F4-collision}.

\begin{figure}[h]
\centering
\begin{tikzpicture}[scale=1, every node/.style={font=\small}]
\node[draw,circle,minimum size=7mm] (g0) at (0,1) {$g_0$};
\node[draw,circle,minimum size=7mm] (g1) at (0,0) {$g_1$};
\node[draw,circle,minimum size=7mm] (f0) at (4,1) {$f_0$};
\node[draw,circle,minimum size=7mm] (f1) at (4,0) {$f_1$};
\draw[thick] (g0) to[bend left=18] node[above,sloped,pos=.58] {$1+2\equiv0$} (f1);
\draw[thick] (g1) to[bend right=18] node[below,sloped,pos=.58] {$2+1\equiv0$} (f0);
\node at (0,1.65) {coefficients of $g$};
\node at (4,1.65) {coefficients of $f$};
\end{tikzpicture}
\caption{The collision graph $\Gamma_2(4)$ is a two-edge matching.  Each edge contributes one free scalar to $\Sn(\F_4)$.}
\label{fig:F4-collision}
\end{figure}

Explicitly, every solution has the form
\[
        g(x)=a_0x+a_1x^2,
        \qquad
        f(x)=a_1x+a_0x^2,
        \qquad a_0,a_1\in\F_4,
\]
because the two edge relations are $a_0+b_1=0$ and $a_1+b_0=0$, and $\charac\F_4=2$.
\end{example}

\begin{example}\label{ex:F4-modular-genuine}
The modulus in Corollary \ref{cor:finite-field-collision-graph} is essential, rather than cosmetic.  Take $n=5$ over $\F_4$.  Then $n+1=6$ is not equal to $2^i+2^j$ for $0\leq i,j\leq1$, but
\[
        6\equiv0\equiv 2^0+2^1\pmod3.
\]
Consequently, for arbitrary $a_0,a_1\in\F_4$, the maps
\[
        f(x)=a_1x+a_0x^2,
        \qquad
        g(x)=a_0x+a_1x^2
\]
satisfy \eqref{eq:main}.  Directly, this also follows from $x^5=x^2$ on $\F_4^\times$.
\end{example}

\subsection{Infinite fields and additive polynomial maps}

\begin{definition}\label{def:additive-polynomial}
Let $F$ be a field of characteristic $p>0$.  An \emph{additive polynomial map} $h:F\to F$ is a map of the form
\[
        h(x)=\sum_{i=0}^r c_i x^{p^i},
        \qquad c_i\in F.
\]
\end{definition}

\begin{theorem}\label{thm:infinite-field-additive-polynomial}
Let $F$ be an infinite field of characteristic $p>0$, let $n\geq2$, and let $f,g:F\to F$ be additive polynomial maps.  Write
\[
        g(x)=\sum_{i\in A}a_i x^{p^i},
        \qquad
        f(x)=\sum_{j\in B}b_j x^{p^j},
\]
where $A,B$ are finite subsets of $\Z_{\geq0}$ and all displayed coefficients are nonzero.  Then $(f,g)$ satisfies \eqref{eq:main} if and only if:
\begin{enumerate}[label=\textup{(\roman*)}]
\item for every $i\in A$, there is a unique $j\in B$ such that $p^i+p^j=n+1$;
\item for every $j\in B$, there is a unique $i\in A$ such that $p^i+p^j=n+1$;
\item whenever $p^i+p^j=n+1$, one has $a_i+b_j=0$.
\end{enumerate}
In particular, a nonzero additive-polynomial solution exists if and only if $n+1=p^i+p^j$ for some $i,j\geq0$.
\end{theorem}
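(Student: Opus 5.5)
We will prove the theorem by substituting the two additive polynomials into \eqref{eq:main} and comparing exponents. Since $F$ is commutative, multiplying \eqref{eq:main} by $x^{N}$ for a large integer $N$ turns it into a polynomial identity. Concretely, for $x\in F^\times$ the left side equals
\[
        \sum_{j\in B} b_j x^{p^j-1}+\sum_{i\in A}a_i x^{n-p^i}.
\]
Choose $N\geq \max_{i\in A}p^i$ and multiply by $x^{N}$. The identity becomes the vanishing on $F^\times$ of the genuine polynomial
\[
        P(x)=\sum_{j\in B} b_j x^{N+p^j-1}+\sum_{i\in A}a_i x^{N+n-p^i},
\]
all of whose exponents are nonnegative. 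Because $F$ is infinite, a polynomial vanishing on the infinite set $F^\times$ is the zero polynomial. Hence \eqref{eq:main} holds if and only if $P=0$ in $F[x]$, that is, if and only if for each integer $e$ the total coefficient of $x^{N+e}$ vanishes.

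Next we analyze the collisions among exponents, mirroring the proof of Corollary~\ref{cor:finite-field-collision-graph}, now with honest integer equality. The exponents $p^j-1$ for $j\in B$ are pairwise distinct, since $j\mapsto p^j$ is injective. Likewise the exponents $n-p^i$ for $i\in A$ are pairwise distinct. Thus a monomial from the $f$-part can coincide with at most one monomial from the $g$-part, and conversely. Such a coincidence happens exactly when $p^j-1=n-p^i$, that is, when $p^i+p^j=n+1$.

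It follows that $P=0$ if and only if two conditions hold. First, every $b_j$ with $j\in B$ cancels against some $a_i$ with $p^i+p^j=n+1$ and $a_i+b_j=0$. Second, the same holds for every $a_i$ with $i\in A$. Since all displayed coefficients are nonzero, an unmatched index would leave a nonzero monomial, so matching is forced; this gives (i) and (ii). Uniqueness in (i) and (ii) comes from the distinctness just shown. Condition (iii) is exactly the cancellation of each paired coefficient. Conversely, (i)--(iii) make every monomial cancel in pairs, so $P=0$ and \eqref{eq:main} holds. One point needs care: in (iii), if $p^i+p^j=n+1$ with $i\in A$ but $j\notin B$, condition (i) already rules this out. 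So we should read (iii) for $i\in A$, $j\in B$, or note that (i) and (ii) force the pairing.

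For the final assertion, one direction is Example~\ref{ex:frobenius-basic}: if $n+1=p^a+p^b$, then $g=cx^{p^a}$, $f=-cx^{p^b}$ is a nonzero solution. Conversely, a nonzero solution has $A\cup B\neq\emptyset$, and (i) or (ii) then produces a pair with $p^i+p^j=n+1$.

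The main obstacle is only the reduction to a polynomial identity. That is where infiniteness of $F$ is essential, as the modular phenomenon of Example~\ref{ex:F4-modular-genuine} shows. We handle it by clearing negative exponents with the factor $x^N$ and invoking the finiteness of the root set of a nonzero polynomial.
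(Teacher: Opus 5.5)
Your proposal is correct and follows essentially the same route as the paper's proof. You clear negative exponents to get a polynomial vanishing on the infinite set $F^\times$, and then use the pairwise distinctness of the exponents $p^j-1$ and of the exponents $n-p^i$, so that the only possible cancellations are between pairs with $p^i+p^j=n+1$. Your remark on how to read (iii), and your explicit treatment of the final assertion via Example~\ref{ex:frobenius-basic}, are minor details the paper leaves implicit.
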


\begin{proof}
Substituting the displayed expressions into \eqref{eq:main} gives, for every $x\in F^\times$,
\[
        \sum_{j\in B}b_j x^{p^j-1}
        +
        \sum_{i\in A}a_i x^{n-p^i}=0.
\]
This is a Laurent polynomial identity on $F^\times$.  Since $F$ is infinite, multiplying by a sufficiently large power of $x$ gives an ordinary polynomial over $F$ with infinitely many roots; hence the Laurent polynomial is identically zero.

The exponents $p^j-1$ are pairwise distinct, and the exponents $n-p^i$ are pairwise distinct.  Thus the only possible cancellations are between $p^j-1$ and $n-p^i$, which occurs exactly when $p^i+p^j=n+1$.  For each such pair the coefficient equation is $a_i+b_j=0$.  Any coefficient not involved in such a collision must vanish.
\end{proof}

\begin{problem}\label{prob:infinite-field-additive}
Let $F$ be an infinite field of characteristic $p>0$, for example $\overline{\Fp}$ or $\Fp(t)$.  Classify all additive maps $f,g:F\to F$ satisfying \eqref{eq:main}.  Here arbitrary additive maps are merely $\Fp$-linear and need not be represented by finite additive polynomials, so Theorem \ref{thm:infinite-field-additive-polynomial} does not settle the problem.
\end{problem}

\section{Scaling and central support}\label{sec:scaling-support}

\subsection{Scaling and semilinear vanishing}

We now turn to arbitrary division rings.  The arguments in this section do not use commutative polynomial expansion.  They use only additivity with respect to suitable central scalars.

\begin{theorem}\label{thm:prime-field-scaling}
Let $D$ be a division ring, let $n\geq2$, and let $f,g:D\to D$ be additive maps satisfying \eqref{eq:main}.  Let $P$ be the prime field of $D$.  Suppose there exists $\lambda\in P^\times$ such that
\[
        \lambda^{n-1}\neq1.
\]
Then $f=g=0$.
\end{theorem}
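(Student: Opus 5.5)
The plan is to substitute $\lambda x$ for $x$ in \eqref{eq:main}, use Lemma~\ref{lem:prime-field-linearity} to pull $\lambda$ out of $f$ and $g$, and compare the result with the original identity. Fix $x\in D^\times$ and $\lambda\in P^\times$ with $\lambda^{n-1}\neq1$. Since $\lambda$ is central, we have $(\lambda x)^{-1}=\lambda^{-1}x^{-1}$ and $(\lambda x)^n=\lambda^n x^n$. Lemma~\ref{lem:prime-field-linearity} gives $f(\lambda x)=\lambda f(x)$ and $g(\lambda^{-1}x^{-1})=\lambda^{-1}g(x^{-1})$, because $\lambda^{-1}\in P$. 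Substituting $\lambda x$ into \eqref{eq:main} therefore yields
\[
        f(x)x^{-1}+\lambda^{n-1}x^n g(x^{-1})=0,
\]
where the factor $\lambda\lambda^{-1}=1$ appears in the first term.

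Subtracting this from \eqref{eq:main} gives $(1-\lambda^{n-1})x^n g(x^{-1})=0$. Here $1-\lambda^{n-1}$ is a nonzero central element of a division ring, so it is invertible, and $x^n$ is also invertible. Hence $g(x^{-1})=0$ for every $x\in D^\times$. As $x$ ranges over $D^\times$, so does $x^{-1}$, and additivity gives $g(0)=0$; therefore $g=0$. Feeding this back into \eqref{eq:main} gives $f(x)x^{-1}=0$, so $f(x)=0$ for every $x\neq0$, and again $f(0)=0$. Thus $f=g=0$.

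There is no real obstacle. The only point requiring care is that $f$ and $g$ are merely additive, so we can pull out only scalars from the prime field. This is exactly what Lemma~\ref{lem:prime-field-linearity} provides, including for $\lambda^{-1}$. Centrality of $P$ is used to move $\lambda$ past $x^n$ and past the values of $f$ and $g$.

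For the consequences mentioned in the introduction: in characteristic zero, take $\lambda=2$, so $\lambda^{n-1}=2^{n-1}\neq1$ since $n\geq2$. In characteristic $p$, take $\lambda$ to be a generator of $\F_p^\times$; then $\lambda^{n-1}\neq1$ exactly when $(p-1)\nmid(n-1)$.
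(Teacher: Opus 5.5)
Your proof is correct and follows the paper's argument exactly: you substitute $\lambda x$, use Lemma~\ref{lem:prime-field-linearity} and the centrality of $P$ to obtain $f(x)x^{-1}+\lambda^{n-1}x^ng(x^{-1})=0$ alongside the original identity, and cancel the nonzero central factor $1-\lambda^{n-1}$. Your closing remarks on characteristic zero and on the case $(p-1)\nmid(n-1)$ also match Corollaries~\ref{cor:char-zero} and~\ref{cor:positive-scaling}.
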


\begin{proof}
Fix $x\in D^\times$ and set
\[
        A=f(x)x^{-1},
        \qquad
        B=x^n g(x^{-1}).
\]
The identity at $x$ gives $A+B=0$.

Apply the identity at $\lambda x$.  Since $\lambda\in P\subseteq Z(D)$,
\[
        (\lambda x)^{-1}=\lambda^{-1}x^{-1},
        \qquad
        (\lambda x)^n=\lambda^n x^n.
\]
By Lemma \ref{lem:prime-field-linearity},
\[
        f(\lambda x)=\lambda f(x),
        \qquad
        g(\lambda^{-1}x^{-1})=\lambda^{-1}g(x^{-1}).
\]
Thus the identity at $\lambda x$ becomes
\[
        A+\lambda^{n-1}B=0.
\]
Subtracting $A+B=0$ gives $(\lambda^{n-1}-1)B=0$.  Hence $B=0$, and then $A=0$.  Therefore $f(x)=0$ and $g(x^{-1})=0$ for every $x\in D^\times$.  Since additive maps vanish at $0$, we get $f=g=0$.
\end{proof}

\begin{corollary}\label{cor:char-zero}
Let $D$ be a division ring of characteristic zero, and let $n\geq2$.  If additive maps $f,g:D\to D$ satisfy \eqref{eq:main}, then $f=g=0$.
\end{corollary}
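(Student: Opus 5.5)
The plan is to apply Theorem~\ref{thm:prime-field-scaling} directly. That reduces the corollary to exhibiting a single prime-field scalar $\lambda$ with $\lambda^{n-1}\neq1$. In characteristic zero the prime field $P$ is isomorphic to $\Q$, so its elements are rational numbers embedded centrally in $D$.

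The first choice to try is $\lambda=2$, which is a nonzero element of $P$ because the characteristic is zero. Since $n\geq2$, we have $n-1\geq1$. Hence $2^{n-1}\geq2$, and as an element of $\Q\cong P$ this gives $2^{n-1}\neq1$. (The choice $\lambda=-1$ would work only when $n-1$ is odd, so $\lambda=2$ is the uniform choice.) The hypothesis of Theorem~\ref{thm:prime-field-scaling} is therefore satisfied, and the theorem yields $f=g=0$.

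There is essentially no obstacle here. The only point needing a word is that the integer $2^{n-1}-1$ is nonzero in $D$. This holds because $D$ has characteristic zero, so the canonical map $\Z\to D$ is injective, as already used in Lemma~\ref{lem:prime-field-linearity}.
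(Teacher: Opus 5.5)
Your proposal is correct and is exactly the paper's argument: the paper also takes $\lambda=2\in\Q$ in Theorem~\ref{thm:prime-field-scaling}. Your check that $2^{n-1}\neq1$ in $D$ (via injectivity of $\Z\to D$ in characteristic zero) is the only detail, and you handled it.
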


\begin{proof}
Take $\lambda=2\in\Q$ in Theorem \ref{thm:prime-field-scaling}.
\end{proof}

\begin{corollary}\label{cor:positive-scaling}
Let $D$ be a division ring of characteristic $p>0$, and let $n\geq2$.  If
\[
        p-1\nmid n-1,
\]
then every additive solution of \eqref{eq:main} is zero.
\end{corollary}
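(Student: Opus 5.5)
The plan is to deduce this directly from Theorem~\ref{thm:prime-field-scaling}. That theorem gives $f=g=0$ as soon as the prime field $P\cong\Fp$ contains some $\lambda\in P^\times$ with $\lambda^{n-1}\neq1$. So the only task is to produce such a $\lambda$ under the hypothesis $(p-1)\nmid(n-1)$.

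First, take $\lambda$ to be a generator of the cyclic group $\Fp^\times$, which has order $p-1$. Then $\lambda^{k}=1$ holds exactly when $(p-1)\mid k$. Applying this with $k=n-1$, the hypothesis $(p-1)\nmid(n-1)$ gives $\lambda^{n-1}\neq1$.

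Next, note that $\lambda$ lies in the prime field, hence in $Z(D)$. Theorem~\ref{thm:prime-field-scaling} then applies verbatim and yields $f=g=0$.

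One degenerate case should be checked. If $p=2$, then $p-1=1$ divides everything, so the hypothesis is never satisfied and nothing needs to be proved. For $p\geq3$ the argument is uniform, and I expect no real obstacle beyond recalling that $\Fp^\times$ is cyclic of order $p-1$.
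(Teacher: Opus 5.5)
Your proposal is correct and follows essentially the same approach as the paper: choose a generator $\lambda$ of the cyclic group $\Fp^\times$, so that $\lambda^{n-1}\neq1$ exactly when $(p-1)\nmid(n-1)$, and then apply Theorem~\ref{thm:prime-field-scaling}. Your side remark that the hypothesis is vacuous for $p=2$ is accurate but not needed for the argument.
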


\begin{proof}
The group $\Fp^\times$ is cyclic of order $p-1$.  If $p-1\nmid n-1$, choose $\lambda\in\Fp^\times$ with $\lambda^{n-1}\neq1$ and apply Theorem \ref{thm:prime-field-scaling}.
\end{proof}

\begin{remark}\label{rem:coherence-finite-scaling}
The finite-field classification agrees exactly with Corollary \ref{cor:positive-scaling}.  Indeed, if a solution over $\F_q$, where $q=p^m$, is nonzero, then Corollary \ref{cor:finite-field-collision-graph} gives
\[
        p^i+p^j\equiv n+1\pmod{q-1}
\]
for some $i,j$.  Since $p-1$ divides $q-1$, reduction modulo $p-1$ gives $2\equiv n+1$, and hence $p-1\mid n-1$.
\end{remark}

\begin{corollary}\label{cor:even-n}
Let $D$ be a division ring of characteristic different from two, and let $n\geq2$ be even.  Then every additive solution of \eqref{eq:main} is zero.
\end{corollary}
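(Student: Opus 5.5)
This is a direct application of Theorem~\ref{thm:prime-field-scaling} with the scalar $\lambda=-1$. The hypothesis $\charac D\neq 2$ guarantees $-1\neq 1$ in the prime field $P$. It also guarantees $-1\in P^\times$, since $-1\neq 0$ in any ring with $1\neq 0$.

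Since $n$ is even, $n-1$ is odd, so $(-1)^{n-1}=-1$. Because the characteristic is not two, $-1\neq 1$, and therefore $\lambda^{n-1}\neq1$ for $\lambda=-1$. Theorem~\ref{thm:prime-field-scaling} then gives $f=g=0$.

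Concretely, the mechanism is as follows. Replacing $x$ by $-x$ in \eqref{eq:main} uses only additivity: $f(-x)=-f(x)$ and $g(-x^{-1})=-g(x^{-1})$. Since $(-x)^n=x^n$ for even $n$, we get $f(x)x^{-1}-x^ng(x^{-1})=0$. Adding this to \eqref{eq:main} gives $2f(x)x^{-1}=0$, so $f(x)=0$ because $2$ is invertible; then $x^ng(x^{-1})=0$ forces $g(x^{-1})=0$.

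There is no real obstacle. The only point requiring care is confirming that characteristic three is covered, which it is, since only $2\neq0$ is used. This removes the characteristic-three exclusion of \cite{CLM} for $n=2$.
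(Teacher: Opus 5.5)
Your proposal is correct and matches the paper's proof, which simply takes $\lambda=-1$ in Theorem~\ref{thm:prime-field-scaling}. Your concrete unpacking via $x\mapsto -x$ is just that theorem's argument specialized to this scalar.
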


\begin{proof}
Take $\lambda=-1$ in the prime field and apply Theorem \ref{thm:prime-field-scaling}.
\end{proof}

\begin{definition}\label{def:semilinear}
Let $K\subseteq Z(D)$ be a central subfield, and let $\alpha:K\to K$ be a field endomorphism.  An additive map $h:D\to D$ is called \emph{left $\alpha$-semilinear over $K$} if
\[
        h(\lambda x)=\alpha(\lambda)h(x)
        \qquad (\lambda\in K,\ x\in D).
\]
\end{definition}

\begin{theorem}\label{thm:semilinear}
Let $D$ be a division ring, let $K\subseteq Z(D)$ be a central subfield, and let $n\geq2$.  Let $f,g:D\to D$ be additive maps satisfying \eqref{eq:main}.  Assume that $f$ is left $\beta$-semilinear over $K$ and that $g$ is left $\alpha$-semilinear over $K$, where $\alpha,\beta:K\to K$ are field endomorphisms.  If there exists $\lambda\in K^\times$ such that
\[
        \beta(\lambda)\alpha(\lambda)\neq\lambda^{n+1},
\]
then $f=g=0$.
\end{theorem}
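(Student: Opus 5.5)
The plan is to imitate the proof of Theorem~\ref{thm:prime-field-scaling}, replacing prime-field linearity by the assumed semilinearity. Fix $x\in D^\times$ and set
\[
        A=f(x)x^{-1},\qquad B=x^n g(x^{-1}),
\]
so that \eqref{eq:main} at $x$ reads $A+B=0$. Then evaluate \eqref{eq:main} at $\lambda x$, with $\lambda\in K^\times$ the element supplied by the hypothesis.

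Since $\lambda$ is central, we have $(\lambda x)^{-1}=\lambda^{-1}x^{-1}$ and $(\lambda x)^n=\lambda^n x^n$. Semilinearity gives
\[
        f(\lambda x)=\beta(\lambda)f(x),\qquad g(\lambda^{-1}x^{-1})=\alpha(\lambda^{-1})g(x^{-1})=\alpha(\lambda)^{-1}g(x^{-1}).
\]
Here we use that a field endomorphism sends inverses to inverses, and that $\alpha(\lambda)\neq0$ because field endomorphisms are injective. The values $\beta(\lambda)$ and $\alpha(\lambda)$ lie in $K\subseteq Z(D)$, so they commute with everything. The identity at $\lambda x$ therefore becomes
\[
        \beta(\lambda)\lambda^{-1}A+\lambda^n\alpha(\lambda)^{-1}B=0.
\]

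Next, substitute $A=-B$ and multiply through by the central nonzero element $\lambda\alpha(\lambda)$. This gives
\[
        \bigl(\lambda^{n+1}-\beta(\lambda)\alpha(\lambda)\bigr)B=0.
\]
By hypothesis the central scalar $\lambda^{n+1}-\beta(\lambda)\alpha(\lambda)$ is nonzero, so it is invertible in $D$. Hence $B=0$, and then $A=0$.

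From $A=0$ and $B=0$ we get $f(x)=0$ and $g(x^{-1})=0$ for every $x\in D^\times$. As $x^{-1}$ ranges over $D^\times$ and additive maps vanish at $0$, this gives $f=g=0$.

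There is no real obstacle here. The only points to check are the handling of $\alpha(\lambda^{-1})$ and that all scalars involved are central, so they can be moved freely past the values of $f$ and $g$ and past $x$.
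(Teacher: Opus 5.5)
Your proposal is correct and follows essentially the same route as the paper's proof. Both evaluate \eqref{eq:main} at $\lambda x$, use semilinearity and centrality to get $\beta(\lambda)\lambda^{-1}A+\lambda^n\alpha(\lambda)^{-1}B=0$, substitute $A=-B$, and multiply by $\lambda\alpha(\lambda)$ to isolate the nonzero central factor $\lambda^{n+1}-\beta(\lambda)\alpha(\lambda)$.
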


\begin{proof}
Fix $x\in D^\times$ and set $A=f(x)x^{-1}$ and $B=x^n g(x^{-1})$.  Then $A+B=0$.  Applying \eqref{eq:main} at $\lambda x$ gives
\[
        \beta(\lambda)\lambda^{-1}A
        +
        \lambda^n\alpha(\lambda)^{-1}B=0.
\]
Using $A=-B$ gives
\[
        \left(\lambda^n\alpha(\lambda)^{-1}-\beta(\lambda)\lambda^{-1}\right)B=0.
\]
Multiplying the displayed central coefficient by the nonzero scalar $\lambda\alpha(\lambda)$ shows that it is nonzero exactly when $\lambda^{n+1}\neq\beta(\lambda)\alpha(\lambda)$.  Hence $B=0$, then $A=0$, and the usual argument gives $f=g=0$.
\end{proof}

\begin{corollary}\label{cor:central-linear}
Let $K\subseteq Z(D)$ be a central subfield.  Suppose $f,g:D\to D$ are left $K$-linear additive maps satisfying \eqref{eq:main}.  If there exists $\lambda\in K^\times$ with $\lambda^{n-1}\neq1$, then $f=g=0$.  In particular, if $K$ is infinite, then $f=g=0$ for every $n\geq2$.
\end{corollary}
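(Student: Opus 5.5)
The plan is to apply Theorem~\ref{thm:semilinear} with $\alpha=\beta=\id_K$. Left $K$-linearity of $f$ and $g$ means
\[
        f(\lambda x)=\lambda f(x),\qquad g(\lambda x)=\lambda g(x)\qquad(\lambda\in K,\ x\in D),
\]
so both maps are left $\id_K$-semilinear over $K$ in the sense of Definition~\ref{def:semilinear}. The hypothesis of Theorem~\ref{thm:semilinear} then asks for some $\lambda\in K^\times$ with $\lambda\cdot\lambda\neq\lambda^{n+1}$. Since $\lambda\neq0$, dividing by $\lambda^2$ turns this into $\lambda^{n-1}\neq1$, which is exactly the assumed condition, and so $f=g=0$.

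Alternatively, one can repeat the proof of Theorem~\ref{thm:prime-field-scaling} verbatim with $P$ replaced by $K$, using $K$-linearity in place of Lemma~\ref{lem:prime-field-linearity}. With $A=f(x)x^{-1}$ and $B=x^ng(x^{-1})$, the identity at $\lambda x$ reads
\[
        A+\lambda^{n-1}B=0,
\]
and subtracting $A+B=0$ gives $(\lambda^{n-1}-1)B=0$, hence $B=0$ and then $A=0$.

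For the final assertion, suppose $K$ is infinite. The polynomial $t^{n-1}-1$ has at most $n-1$ roots in the field $K$, so $K^\times$ contains some $\lambda$ with $\lambda^{n-1}\neq1$. The first part then gives $f=g=0$ for every $n\geq2$.

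There is no real obstacle here. The only point that needs care is checking that left $K$-linearity yields $g(\lambda^{-1}x^{-1})=\lambda^{-1}g(x^{-1})$. This holds because $\lambda^{-1}\in K$, and $K$ is central, so the scalars can be moved freely past $x^n$ and $x^{-1}$.
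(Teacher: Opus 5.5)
Your proposal is correct and matches the paper's proof: you specialize Theorem~\ref{thm:semilinear} to $\alpha=\beta=\id_K$, reduce $\lambda^2\neq\lambda^{n+1}$ to $\lambda^{n-1}\neq1$, and note that the nonzero polynomial $T^{n-1}-1$ has only finitely many roots, so an infinite $K$ supplies a suitable $\lambda$. Your alternative route, rerunning the proof of Theorem~\ref{thm:prime-field-scaling} with $K$-linearity in place of Lemma~\ref{lem:prime-field-linearity}, is also valid.
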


\begin{proof}
Take $\alpha=\beta=\id_K$ in Theorem \ref{thm:semilinear}.  If $K$ is infinite, the polynomial $T^{n-1}-1$ cannot vanish on all of $K^\times$.
\end{proof}

\begin{corollary}\label{cor:frobenius-semilinear}
Assume $\charac D=p>0$, and let $K\subseteq Z(D)$ be a central subfield.  Let $a,b\geq0$.  Suppose $f,g:D\to D$ satisfy
\[
        f(\lambda x)=\lambda^{p^b}f(x),
        \qquad
        g(\lambda x)=\lambda^{p^a}g(x)
\]
for all $\lambda\in K$ and $x\in D$.  If there exists $\lambda\in K^\times$ such that
\[
        \lambda^{p^a+p^b}\neq\lambda^{n+1},
\]
then every additive solution of \eqref{eq:main} is zero.  In particular, if $K$ is infinite and $p^a+p^b\neq n+1$, then $f=g=0$.
\end{corollary}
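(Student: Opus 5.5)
This corollary is a direct specialization of Theorem~\ref{thm:semilinear}. The plan is to take the semilinearity twists to be Frobenius powers and then check the hypotheses.

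First, define $\alpha,\beta:K\to K$ by $\alpha(\lambda)=\lambda^{p^a}$ and $\beta(\lambda)=\lambda^{p^b}$. Since $\charac K=p$, these are the iterated Frobenius maps. They preserve sums because $(\lambda+\mu)^{p}=\lambda^p+\mu^p$, and they clearly preserve products and $1$, so both are field endomorphisms of $K$.

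The assumed homogeneity relations say exactly that $f$ is left $\beta$-semilinear over $K$ and $g$ is left $\alpha$-semilinear over $K$, in the sense of Definition~\ref{def:semilinear}. Moreover,
\[
\beta(\lambda)\alpha(\lambda)=\lambda^{p^b}\lambda^{p^a}=\lambda^{p^a+p^b}.
\]
So the hypothesis $\lambda^{p^a+p^b}\neq\lambda^{n+1}$ for some $\lambda\in K^\times$ is precisely the condition required in Theorem~\ref{thm:semilinear}. That theorem then gives $f=g=0$.

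For the ``in particular'' clause, suppose $K$ is infinite and $p^a+p^b\neq n+1$. Let $e$ and $e'$ denote the larger and smaller of the two exponents $p^a+p^b$ and $n+1$. If $\lambda^{p^a+p^b}=\lambda^{n+1}$ held for every $\lambda\in K^\times$, then every element of $K^\times$ would be a root of the nonzero polynomial $T^{e-e'}-1$. This is impossible, since $K^\times$ is infinite while that polynomial has only finitely many roots. Hence some $\lambda\in K^\times$ satisfies $\lambda^{p^a+p^b}\neq\lambda^{n+1}$, and the first part applies.

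No step here is a real obstacle. The only point that needs care is confirming that $\lambda\mapsto\lambda^{p^a}$ is additive on $K$, which relies on the characteristic being $p$.
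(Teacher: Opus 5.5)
Your proposal is correct and matches the paper's proof, which is exactly the one-line application of Theorem~\ref{thm:semilinear} with $\alpha(\lambda)=\lambda^{p^a}$ and $\beta(\lambda)=\lambda^{p^b}$. Your root-counting argument for the ``in particular'' clause, via the nonzero polynomial $T^{e-e'}-1$, correctly fills in a step the paper leaves implicit.
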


\begin{proof}
Apply Theorem \ref{thm:semilinear} with $\alpha(\lambda)=\lambda^{p^a}$ and $\beta(\lambda)=\lambda^{p^b}$.
\end{proof}

\subsection{Finite sums of homogeneous components}

The preceding argument separates two terms by comparing their behavior under central dilations.  The same idea gives a support principle for finite sums of homogeneous components.

\begin{lemma}\label{lem:laurent-independence-infinite}
Let $K$ be an infinite field, let $V$ be a vector space over $K$, and let $r_1,\ldots,r_s$ be distinct integers.  If $v_1,\ldots,v_s\in V$ satisfy
\[
        \sum_{\ell=1}^s\lambda^{r_\ell}v_\ell=0
\]
for infinitely many $\lambda\in K^\times$, then $v_1=\cdots=v_s=0$.
\end{lemma}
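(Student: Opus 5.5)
The plan is a Vandermonde argument.

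Choose $s$ distinct elements $\lambda_1,\ldots,\lambda_s\in K^\times$ for which the relation holds; this is possible because the relation holds for infinitely many $\lambda$. These give the system
\[
        \sum_{\ell=1}^s\lambda_k^{r_\ell}v_\ell=0\qquad (1\leq k\leq s).
\]
The exponents $r_\ell$ may be negative, so first normalize them. Let $r_0=\min_\ell r_\ell$ and multiply the $k$-th equation by the nonzero scalar $\lambda_k^{-r_0}$. This replaces every $r_\ell$ by $e_\ell=r_\ell-r_0\geq0$. The $e_\ell$ are still distinct, and the solution set is unchanged.

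We now have the matrix $M=(\lambda_k^{e_\ell})_{k,\ell}$ over $K$, and the system reads $M(v_1,\ldots,v_s)^{T}=0$ in $V^s$. If $M$ is invertible over $K$, multiplying by $M^{-1}$ gives $v_\ell=0$ for all $\ell$. This step is legitimate because the $v_\ell$ lie in a $K$-vector space and the entries of $M$ are scalars from $K$.

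The main obstacle is invertibility. $M$ is a generalized Vandermonde matrix, and for arbitrary distinct $\lambda_k$ its determinant can vanish, for instance when two of the $\lambda_k^{e}$ coincide in positive characteristic. We therefore avoid choosing specific points and argue with polynomials, using linear algebra on a finite-dimensional subspace.

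Let $W$ be the $K$-span of $v_1,\ldots,v_s$, which is finite-dimensional, and fix any $K$-linear functional $\phi:W\to K$. Set $c_\ell=\phi(v_\ell)$. Then the polynomial $P(T)=\sum_\ell c_\ell T^{e_\ell}\in K[T]$ vanishes at infinitely many $\lambda\in K^\times$. A nonzero polynomial has only finitely many roots, so $P=0$. Because the $e_\ell$ are distinct, the monomials $T^{e_\ell}$ are distinct, and hence every $c_\ell=0$. Since $\phi$ was arbitrary and linear functionals separate the points of $W$, each $v_\ell=0$. This argument bypasses the determinant entirely and uses only that the roots of a nonzero polynomial are finite in number.
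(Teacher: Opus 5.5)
Your proof is correct and is essentially the paper's argument. The paper takes coordinates with respect to a basis of $V$; you apply linear functionals on the finite-dimensional span $W$, which also avoids choosing a basis of all of $V$. In both cases each scalar Laurent identity, after clearing negative exponents, is a polynomial with infinitely many roots and therefore vanishes. The opening Vandermonde discussion plays no role in the final argument and can be dropped.
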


\begin{proof}
Choose a basis of $V$ over $K$ and take coordinates.  Each coordinate gives a scalar Laurent polynomial identity over $K$.  Multiplying by a sufficiently large power of $\lambda$ gives an ordinary polynomial over $K$ vanishing on infinitely many elements.  Hence every coordinate polynomial is zero.
\end{proof}

\begin{theorem}\label{thm:homogeneous-infinite}
Let $D$ be a division ring of characteristic $p>0$, let $K\subseteq Z(D)$ be an infinite central subfield, and let $n\geq2$.  Suppose additive maps $f,g:D\to D$ satisfy \eqref{eq:main}.  Assume
\[
        f=\sum_{b\in B}f_b,
        \qquad
        g=\sum_{a\in A}g_a,
\]
where $A$ and $B$ are finite subsets of $\Z_{\geq0}$, and
\[
        f_b(\lambda x)=\lambda^{p^b}f_b(x),
        \qquad
        g_a(\lambda x)=\lambda^{p^a}g_a(x)
\]
for all $\lambda\in K$ and all relevant $a,b$.  Then, for every $x\in D^\times$ and every integer $r$,
\[
        \sum_{\substack{b\in B\\ p^b-1=r}} f_b(x)x^{-1}
        +
        \sum_{\substack{a\in A\\ n-p^a=r}} x^n g_a(x^{-1})
        =0.
\]
In particular, an $f_b$-component and a $g_a$-component can cancel only when
\[
        p^a+p^b=n+1.
\]
If no pair $(a,b)\in A\times B$ satisfies this equality, then $f=g=0$.
\end{theorem}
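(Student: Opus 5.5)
The plan is to dilate by central scalars, exactly as in the proof of Theorem~\ref{thm:semilinear}, and then separate the resulting Laurent coefficients with Lemma~\ref{lem:laurent-independence-infinite}.

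Fix $x\in D^\times$ and $\lambda\in K^\times$, and apply \eqref{eq:main} at $\lambda x$. Since $\lambda$ is central, $(\lambda x)^{-1}=\lambda^{-1}x^{-1}$ and $(\lambda x)^n=\lambda^n x^n$. The homogeneity hypotheses give $f_b(\lambda x)=\lambda^{p^b}f_b(x)$ and $g_a(\lambda^{-1}x^{-1})=\lambda^{-p^a}g_a(x^{-1})$. For the latter, apply the hypothesis with the scalar $\lambda^{-1}\in K$.

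Because $\lambda$ commutes with everything, the identity at $\lambda x$ becomes
\[
        \sum_{b\in B}\lambda^{p^b-1}f_b(x)x^{-1}
        +\sum_{a\in A}\lambda^{n-p^a}x^n g_a(x^{-1})=0
        \qquad(\lambda\in K^\times).
\]
Now group the terms by exponent. For each integer $r$, let $v_r$ be the sum of all $f_b(x)x^{-1}$ with $p^b-1=r$ and all $x^ng_a(x^{-1})$ with $n-p^a=r$. Only finitely many $r$ occur, since $A$ and $B$ are finite, and the display reads $\sum_r\lambda^r v_r=0$ for every $\lambda\in K^\times$.

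Regard $D$ as a left $K$-vector space; this makes sense since $K\subseteq Z(D)$ is a field. Since $K$ is infinite, Lemma~\ref{lem:laurent-independence-infinite} gives $v_r=0$ for every $r$, which is exactly the displayed identity in the theorem.

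For the consequences, note that the exponents $p^b-1$ for $b\in B$ are pairwise distinct, as are the exponents $n-p^a$ for $a\in A$. So each group $v_r$ contains at most one $f$-term and at most one $g$-term. A group contains one of each exactly when $p^b-1=n-p^a$, that is, when $p^a+p^b=n+1$.

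If no pair $(a,b)\in A\times B$ satisfies this equation, every $v_r$ is a single term. Hence $f_b(x)x^{-1}=0$ and $x^ng_a(x^{-1})=0$ for all $a,b$ and all $x\in D^\times$. Since $D$ is a division ring, this forces $f_b(x)=0$ and $g_a(x^{-1})=0$. Letting $x$ range over $D^\times$ and using additivity at $0$, each $f_b$ and each $g_a$ vanishes, so $f=g=0$.

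The argument is routine. The only point requiring care is the application of Lemma~\ref{lem:laurent-independence-infinite}. Its hypothesis is a $K$-vector space, and the relevant structure is left multiplication by central scalars, which is legitimate because $\lambda$ commutes with $x$, $x^{-1}$ and $x^n$.
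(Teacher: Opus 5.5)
Your proposal is correct and matches the paper's proof: dilate by $\lambda\in K^\times$, use centrality and the homogeneity of the components to get $\sum_{b}\lambda^{p^b-1}f_b(x)x^{-1}+\sum_{a}\lambda^{n-p^a}x^n g_a(x^{-1})=0$, and separate the exponent classes with Lemma~\ref{lem:laurent-independence-infinite}. Your grouping into distinct exponents $r$, and your observation that each class contains at most one $f$-term and at most one $g$-term, simply spell out what the paper states as ``all terms then occur in isolated exponent classes.''
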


\begin{proof}
Apply \eqref{eq:main} to $\lambda x$, with $\lambda\in K^\times$.  Since $K\subseteq Z(D)$,
\[
        f_b(\lambda x)(\lambda x)^{-1}=\lambda^{p^b-1}f_b(x)x^{-1}
\]
and
\[
        (\lambda x)^n g_a((\lambda x)^{-1})
        =\lambda^{n-p^a}x^n g_a(x^{-1}).
\]
Thus, for every $\lambda\in K^\times$,
\[
        \sum_{b\in B}\lambda^{p^b-1}f_b(x)x^{-1}
        +
        \sum_{a\in A}\lambda^{n-p^a}x^n g_a(x^{-1})=0.
\]
Lemma \ref{lem:laurent-independence-infinite} gives the coefficient identities.  The final assertion follows because all terms then occur in isolated exponent classes.
\end{proof}

\begin{remark}\label{rem:role-of-scaling}
The infinite-field support theorem complements the unconditional finite-central-field decomposition in Theorem \ref{thm:canonical-weight-pairing}.  Together, they isolate the central-weight obstruction responsible for the Frobenius phenomena without using generalized-polynomial-identity methods.
\end{remark}

\section{Characteristic two}\label{sec:char-two}

Throughout this section, $D$ is a division ring of characteristic two, $Z=Z(D)$, $n\geq2$, and
\[
        N=n+1.
\]
For $x,y\in D$, put $[x,y]=xy+yx$; in characteristic two this is the usual additive commutator.  We begin with the generalized-polynomial reduction and then use additivity more strongly to obtain vanishing theorems.

We use Hua's identity in the form
\begin{equation}\label{eq:hua}
        1-a=\left(1+(a^{-1}-1)^{-1}\right)^{-1},
\end{equation}
valid for $a\neq0,1$ in any division ring.

\begin{lemma}\label{lem:hua-reduction}
Let $f,g:D\to D$ be additive maps satisfying \eqref{eq:main}, and put
\[
        c=f(1)=g(1).
\]
Then, for every $a\in D$,
\begin{equation}\label{eq:fg-sum-formula}
        f(a)+g(a)=c+(1+a)^n c(1+a)+a^nca.
\end{equation}
\end{lemma}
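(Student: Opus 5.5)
We will prove \eqref{eq:fg-sum-formula} by evaluating $f(1+a)$ in two ways: once by additivity, and once by running the identity \eqref{eq:main} through Hua's identity \eqref{eq:hua}. Since $\charac D=2$, all signs can be ignored.

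\emph{Step 1: two inverse-free rewritings.} Right-multiplying \eqref{eq:main} by $x$ gives
\[
f(x)=x^n g(x^{-1})x \qquad (x\in D^\times).
\]
Substituting $x=y^{-1}$ and solving for $g(y)$ gives
\[
g(y)=y^n f(y^{-1})y \qquad (y\in D^\times).
\]
Taking $x=1$ in \eqref{eq:main} gives $f(1)+g(1)=0$, so $f(1)=g(1)=c$ in characteristic two.

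\emph{Step 2: the degenerate cases.} For $a=0$, the left side of \eqref{eq:fg-sum-formula} is $0$ and the right side is $c+c+0=0$. For $a=1$, the left side is $2c=0$ and the right side is $c+0+c=0$. So we may assume $a\neq0,1$.

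\emph{Step 3: the Hua computation.} Put
\[
b=a^{-1}+1=a^{-1}(1+a)=(1+a)a^{-1}.
\]
Then $b\neq0$, $b$ commutes with $a$, $b^{-1}=a(1+a)^{-1}$, and \eqref{eq:hua} reads $(1+a)^{-1}=1+b^{-1}$. We compute as follows.
\begin{enumerate}[label=\textup{(\alph*)}]
\item By additivity, $f(1+a)=c+f(a)$.
\item By Step 1, $f(1+a)=(1+a)^n g(1+b^{-1})(1+a)$, and additivity gives $g(1+b^{-1})=c+g(b^{-1})$.
\item By Step 1 again, $g(b^{-1})=b^{-n}f(b)b^{-1}$, and additivity gives $f(b)=f(a^{-1})+c$.
\end{enumerate}
Since $a$, $1+a$ and $b$ commute pairwise, we have
\[
(1+a)^n b^{-n}=a^n,\qquad b^{-1}(1+a)=a.
\]
Substituting (b) and (c) therefore gives
\[
f(1+a)=(1+a)^n c(1+a)+a^n c a+a^n f(a^{-1})a.
\]
By the second formula of Step 1, the last term equals $g(a)$. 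Comparing with (a) yields
\[
c+f(a)=(1+a)^n c(1+a)+a^nca+g(a),
\]
which is \eqref{eq:fg-sum-formula}.

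\emph{Main obstacle.} The only delicate point is the noncommutative bookkeeping in Step 3. The value $g(\cdot)$ must stay sandwiched between its left power factors and the trailing right factor. This works only because every outer factor is a rational expression in $a$ alone, so these factors commute with each other and recombine into $a^n(\cdot)a$.
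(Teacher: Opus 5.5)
Your proof is correct and takes essentially the same route as the paper. Both evaluate $f(1+a)$ (the paper writes $f(1-a)$) using the rewritings $f(x)=x^ng(x^{-1})x$ and $g(x)=x^nf(x^{-1})x$ together with Hua's identity for $(1+a)^{-1}$, and both recombine the commuting outer factors into $a^n(\cdot)a$; your explicit $b=a^{-1}+1$ and your check of the cases $a=0,1$ just spell out details the paper leaves terse.
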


\begin{proof}
Substituting $x=1$ into \eqref{eq:main} gives $f(1)+g(1)=0$, hence $f(1)=g(1)=c$.  Equation \eqref{eq:main} is equivalent to
\begin{equation}\label{eq:f-from-g-char2}
        f(x)=x^n g(x^{-1})x\qquad (x\in D^\times),
\end{equation}
and, after replacing $x$ by $x^{-1}$, to
\begin{equation}\label{eq:g-from-f-char2}
        g(x)=x^n f(x^{-1})x\qquad (x\in D^\times).
\end{equation}

Assume first that $a\neq0,1$.  Additivity, \eqref{eq:f-from-g-char2}, Hua's identity, and \eqref{eq:g-from-f-char2} give
\[
\begin{aligned}
        f(a)&=c+f(1-a),\\
        f(1-a)
        &=(1-a)^n g((1-a)^{-1})(1-a)\\
        &=(1-a)^n c(1-a)+a^n f(a^{-1}-1)a\\
        &=(1-a)^n c(1-a)+g(a)+a^nca.
\end{aligned}
\]
Since $1-a=1+a$ in characteristic two, this proves \eqref{eq:fg-sum-formula}.  The cases $a=0,1$ are immediate.
\end{proof}

For $c\in D$, define
\begin{equation}\label{eq:Pc-def}
        P_c(X)=c+(1+X)^n c(1+X)+X^n cX
\end{equation}
in $D*_{Z}Z\langle X\rangle$, and put
\begin{equation}\label{eq:Phi-def}
        \Phi_c(X,Y)=P_c(X+Y)+P_c(X)+P_c(Y).
\end{equation}
Lemma \ref{lem:hua-reduction} says that $f+g=P_c$ as functions on $D$.  Since $f+g$ is additive, $\Phi_c(a,b)=0$ for all $a,b\in D$.

\begin{lemma}\label{lem:gpi-polynomial-nonzero-corrected}
Let $0\neq c\in D$.  Then $\Phi_c$ is a nonzero generalized polynomial if either
\begin{enumerate}[label=\textup{(\roman*)}]
\item $c\notin Z$; or
\item $c\in Z$ and $N$ is not a power of two.
\end{enumerate}
If $c\in Z$ and $N$ is a power of two, then $P_c(X)=0$ and hence $\Phi_c(X,Y)=0$.
\end{lemma}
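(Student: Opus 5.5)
The plan is to decide nonvanishing of $\Phi_c$ by isolating a single homogeneous or multihomogeneous component, in the grading of $D*_Z Z\langle X,Y\rangle$ by degree in $X$ and in $Y$, and showing that this component is nonzero. Since $\Phi_c$ is built from $P_c$, I first expand $P_c$ by total degree in $X$. The constant terms $c+c$ cancel. The degree-$1$ part is killed by $\Phi_c$ in any case, since $\Phi_c$ measures the failure of $P_c$ to be additive. The top degree is $N=n+1$, and the top component of $P_c$ is
\[
X^ncX+X^ncX=0\qquad\text{in characteristic two,}
\]
which is a cancellation to watch for. So $P_c$ has total degree at most $n$, with homogeneous pieces coming only from $(1+X)^nc(1+X)$. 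I will work separately in the two cases.

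\emph{Case (i), $c\notin Z$.} Choose a $Z$-basis of $D$ containing $1$ and $c$, so that words alternating basis elements and variables are $Z$-linearly independent in the free product. The degree-$n$ part of $P_c(X)$ is
\[
X^nc+X^{n-1}cX+\cdots\quad\text{(all words of degree $n$ in } (1+X)^nc(1+X)).
\]
Concretely, it is the sum of $X^n c$ and $\binom{n}{n-1}X^{n-1}cX$. The main point is to pick a component of $\Phi_c$ of bidegree $(n-1,1)$ in $(X,Y)$ and locate a word that occurs with odd coefficient. The word $X^{n-1}cY$ should do: it comes only from $(X+Y)^{n-1}c(X+Y)$, appears exactly once there (via $\binom{n}{n-1}=n$ only if $n$ is odd), and is otherwise produced from $(X+Y)^nc$ as $X^{n-1}Y c$, which is a different reduced word because $c\notin Z$.

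I expect this coefficient bookkeeping to be the delicate step. If $n$ is even, the $X^{n-1}cX$ coefficient is even, and I would switch to the word $X^{n-1}Yc$ coming from $X^nc$, whose coefficient is $1$. In either parity, some bidegree-$(n-1,1)$ word has coefficient $1$ and cannot be cancelled, because $c\notin Z$ makes words with $c$ in different positions independent. The hard part of this case is verifying that independence carefully from the basis description.

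\emph{Case (ii), $c\in Z$.} Here everything commutes with $c$, and
\[
P_c(X)=c\,p(X),\qquad p(X)=1+(1+X)^N+X^N=\sum_{0<k<N}\binom Nk X^k\in\mathbb F_2[X].
\]
Since $c\neq0$ is central and invertible, $\Phi_c$ is nonzero if and only if $p(X+Y)+p(X)+p(Y)\neq0$ in $Z\langle X,Y\rangle$. By symmetry $\binom Nk=\binom N{N-k}$. If the only odd middle binomial coefficients occurred at $k=1$, then $N-1=1$, which forces $N=2$, a power of two. Hence if $N$ is not a power of two, some $k\geq2$ has $\binom Nk$ odd. The component of degree $k$ in the additivity defect is
\[
(X+Y)^k+X^k+Y^k,
\]
which contains the word $XY^{k-1}$ with coefficient $1$. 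Components of different degrees cannot cancel, so $\Phi_c\neq0$.

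Conversely, if $N$ is a power of two, Lucas' theorem makes every middle binomial coefficient even. Then $p=0$, so $P_c=0$ and hence $\Phi_c=0$, as claimed.
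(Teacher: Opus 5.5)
Your proposal is correct and is essentially the paper's proof. In case (i) you isolate an uncancellable mixed word in free-product normal form. In case (ii) you factor out the central $c$ and use an odd $\binom Nk$ with $2\le k\le N-1$ to produce the nonzero homogeneous component $(X+Y)^k+X^k+Y^k$.

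Two small remarks.
\begin{enumerate}
\item In case (i) the parity split is unnecessary. The word $X^{n-1}Yc$ comes only from $(1+X+Y)^n c\cdot 1$, with coefficient $\binom nn=1$ for every $n$. It is exactly the word the paper uses.
\item In case (ii) your symmetry argument only shows that an odd middle coefficient, \emph{if one exists}, can be taken with $k\ge 2$. Existence is exactly what Lucas' theorem supplies, and you cite Lucas only for the converse direction. For instance, take $k=2^J$ to be the leading binary digit of $N$. When $N\ge 3$ is not a power of two this gives $2\le k<N$ and $\binom Nk$ odd, and it makes the symmetry step superfluous.
\end{enumerate}
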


\begin{proof}
Suppose first that $c\notin Z$.  In free-product normal form, the word $X^{n-1}Yc$ occurs in $(1+X+Y)^n c(1+X+Y)$.  The polynomials $P_c(X)$ and $P_c(Y)$ contain no mixed word, while every word contributed by $(X+Y)^n c(X+Y)$ has a variable to the right of $c$.  Thus the displayed word cannot cancel and $\Phi_c\neq0$.

Now suppose $c\in Z$.  Then
\[
        P_c(X)=c\bigl(1+(1+X)^N+X^N\bigr).
\]
If $N=2^r$, the Frobenius identity $(1+X)^{2^r}=1+X^{2^r}$ in the one-variable polynomial algebra gives $P_c=0$.

If $N$ is not a power of two, Lucas' theorem gives an integer $d$ with $2\leq d\leq N-1$ for which $\binom Nd$ is odd.  The homogeneous degree-$d$ part of $\Phi_c$ is
\[
        c\binom Nd\bigl((X+Y)^d+X^d+Y^d\bigr),
\]
which contains the word $cX^{d-1}Y$ with nonzero coefficient.  Hence $\Phi_c\neq0$.
\end{proof}

\begin{theorem}\label{thm:char-two-gpi-reduction}
Let $f,g:D\to D$ be additive maps satisfying \eqref{eq:main}, and set $c=f(1)=g(1)$.  If $c\neq0$ and either $c\notin Z$ or $N$ is not a power of two, then $D$ is finite-dimensional over $Z$.

If $c\in Z$ and $N$ is a power of two, then $f=g$.  Consequently, if $[D:Z]=\infty$, every solution has $f=g=h$, where
\begin{equation}\label{eq:one-map-reduced}
        h(x)x^{-1}+x^n h(x^{-1})=0\qquad (x\in D^\times),
\end{equation}
and
\[
        h(1)\in Z,
        \qquad
        h(1)=0\quad\text{or}\quad N\text{ is a power of two}.
\]
\end{theorem}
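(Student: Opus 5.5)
The plan is to combine Lemma~\ref{lem:hua-reduction}, Lemma~\ref{lem:gpi-polynomial-nonzero-corrected} and Martindale's theorem (Theorem~\ref{thm:martindale}).

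\emph{First assertion.} By Lemma~\ref{lem:hua-reduction}, $f+g$ coincides on $D$ with the function $a\mapsto P_c(a)$. Since $f+g$ is additive, $\Phi_c(a,b)=0$ for all $a,b\in D$. If $c\neq0$ and either $c\notin Z$ or $N$ is not a power of two, Lemma~\ref{lem:gpi-polynomial-nonzero-corrected} says that $\Phi_c$ is a nonzero element of $D*_Z Z\langle X,Y\rangle$. Hence $\Phi_c$ is a nontrivial GPI for $D$, and Theorem~\ref{thm:martindale} gives $[D:Z]<\infty$. The only point needing care is that "nonzero" in the free product matches "nontrivial GPI" in Martindale's sense. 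Because the extended centroid of a division ring is $Z$ (as recorded in the proof of Theorem~\ref{thm:martindale}), the word-level nonvanishing argument of Lemma~\ref{lem:gpi-polynomial-nonzero-corrected} suffices.

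\emph{Second assertion.} Suppose $c\in Z$ and $N=2^r$. Then Lemma~\ref{lem:gpi-polynomial-nonzero-corrected} gives $P_c=0$ as a formal element. Evaluating, we have $P_c(a)=c\bigl(1+(1+a)^N+a^N\bigr)=0$ for every $a\in D$, since $1$ and $a$ commute and $c$ is central. Thus $f(a)+g(a)=0$, i.e.\ $f=g$ in characteristic two.

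\emph{Consequence.} Assume $[D:Z]=\infty$. By the first assertion, the only possibilities are $c=0$, or $c\in Z$ with $N$ a power of two. In both cases $c\in Z$.
\begin{itemize}
\item If $c\in Z$ and $N$ is a power of two, the second assertion gives $f=g$.
\item If $c=0$, then $P_0=0$ trivially, so again $f+g=0$, i.e.\ $f=g$.
\end{itemize}
So in every case $f=g=:h$. Substituting into \eqref{eq:main} (signs are irrelevant in characteristic two) gives \eqref{eq:one-map-reduced}. The stated side conditions follow: $h(1)=c\in Z$, and either $c=0$ or $N$ is a power of two.

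I expect no serious obstacle. The one subtle step is making sure that $c=0$ is handled via $P_0=0$ rather than through Lemma~\ref{lem:gpi-polynomial-nonzero-corrected}, which assumes $c\neq0$.
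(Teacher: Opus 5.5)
Your proposal is correct and follows the paper's proof essentially step for step. Lemma~\ref{lem:hua-reduction} turns additivity of $f+g$ into the identity $\Phi_c=0$, and Lemma~\ref{lem:gpi-polynomial-nonzero-corrected} together with Martindale's theorem gives the first assertion. In the remaining cases, $P_c=0$ (with $c=0$ handled separately, as the paper also does) yields $f+g=0$, i.e.\ $f=g$.
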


\begin{proof}
By Lemma \ref{lem:hua-reduction}, additivity of $f+g$ gives the generalized polynomial identity $\Phi_c(X,Y)=0$.  Under the first set of hypotheses, Lemma \ref{lem:gpi-polynomial-nonzero-corrected} shows that this GPI is nontrivial, so Theorem \ref{thm:martindale} gives $[D:Z]<\infty$.

If $c\in Z$ and $N$ is a power of two, then $P_c=0$, and Lemma \ref{lem:hua-reduction} gives $f+g=0$, or $f=g$.  Finally suppose $[D:Z]=\infty$.  The first part rules out every nonzero $c$ except the central power-of-two case.  If $c=0$, then $P_c=0$ as well.  Thus $f=g=h$ in every case, and the remaining assertions follow.
\end{proof}

\begin{remark}\label{rem:exception-real}
When $c\in Z$ and $N$ is a power of two, the GPI vanishes identically.  This does not obstruct the reduction to one map: it already implies $f+g=0$.  It does, however, explain why a different argument is needed to decide whether $c$ or $h$ must vanish.
\end{remark}

\subsection{An inverse-free identity}

The fractional-linear transformations $x\mapsto x+1$ and $x\mapsto x^{-1}$ generate only a six-point orbit.  Additivity supplies the missing relation: for $x\neq0,1$,
\[
        x^{-1}+(x+1)^{-1}=(x^2+x)^{-1}.
\]
Applying the reduced identity to this sum removes all inverses.

\begin{lemma}\label{lem:inverse-free}
Let $h:D\to D$ be additive and satisfy \eqref{eq:one-map-reduced}.  Then, for every $x\in D$,
\begin{equation}\label{eq:inverse-free}
\begin{aligned}
        h(x^2)={}&(x+1)^n h(x)(x+1)
        +x^n h(x+1)x+h(x).
\end{aligned}
\end{equation}
Equivalently, if $c=h(1)$, then
\begin{equation}\label{eq:inverse-free-c}
\begin{aligned}
        h(x^2)={}&(x+1)^n h(x)(x+1)
        +x^n h(x)x+h(x)+x^n c x.
\end{aligned}
\end{equation}
\end{lemma}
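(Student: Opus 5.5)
The plan is to combine the reduced identity \eqref{eq:one-map-reduced} with the additive relation $x^{-1}+(x+1)^{-1}=(x^2+x)^{-1}$, valid for $x\neq0,1$, and then clear all inverses. The first step rewrites \eqref{eq:one-map-reduced} as a formula for $h$ at an inverse. Replacing $x$ by $z^{-1}$ in \eqref{eq:one-map-reduced} gives $h(z^{-1})z+z^{-n}h(z)=0$. Since $\charac D=2$, this becomes
\[
        h(z^{-1})=z^{-n}h(z)z^{-1}\qquad (z\in D^\times).
\]

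Next fix $x\neq0,1$. I will apply $h$ to $x^{-1}+(x+1)^{-1}=(x^2+x)^{-1}$. This relation holds because $x$ and $x+1$ commute, so the usual partial-fraction computation goes through. By additivity and the displayed formula,
\[
        x^{-n}h(x)x^{-1}+(x+1)^{-n}h(x+1)(x+1)^{-1}
        =(x^2+x)^{-n}h(x^2+x)(x^2+x)^{-1}.
\]
Because $x$ and $x+1$ commute, $(x^2+x)^{-n}=x^{-n}(x+1)^{-n}$ and $x^2+x=x(x+1)$. I multiply on the left by $x^n(x+1)^n$ and on the right by $x(x+1)$, tracking on which side each factor sits. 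The first term becomes $(x+1)^nh(x)(x+1)$, the second becomes $x^nh(x+1)x$, and the right side becomes $h(x^2+x)=h(x^2)+h(x)$. Rearranging in characteristic two gives \eqref{eq:inverse-free}.

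The remaining cases are checked directly from $h(0)=0$. For $x=0$ both sides of \eqref{eq:inverse-free} vanish. For $x=1$ we have $x+1=0$, so the right side reduces to $1\cdot h(0)\cdot1+h(1)=h(1)$, which equals the left side $h(1)$. Finally, additivity gives $h(x+1)=h(x)+c$ with $c=h(1)$. Substituting this turns $x^nh(x+1)x$ into $x^nh(x)x+x^ncx$, which yields \eqref{eq:inverse-free-c}. The only delicate point is the bookkeeping of left versus right multiplication when clearing denominators. This works only because all the clearing factors are polynomials in $x$, hence commute with each other, while $h(\cdot)$ stays in the middle.
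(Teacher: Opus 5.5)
Your proposal is correct and follows essentially the same route as the paper. You apply $h$ to $x^{-1}+(x+1)^{-1}=(x^2+x)^{-1}$, clear the commuting polynomial factors on both sides (this is exactly the paper's step $h(w)=w^n h(w^{-1})w$ with $w=x^2+x$), then use $h(w)=h(x^2)+h(x)$ and $h(x+1)=h(x)+c$; your direct checks of $x=0,1$ are also right.
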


\begin{proof}
The cases $x=0,1$ are immediate.  Let $x\neq0,1$, set $A=h(x)$ and $w=x^2+x=x(x+1)=(x+1)x$.  The elements $x$, $x+1$, and $w$ commute with one another and with their inverses.  From \eqref{eq:one-map-reduced},
\[
        h(x^{-1})=x^{-n}Ax^{-1},
        \qquad
        h((x+1)^{-1})=(x+1)^{-n}h(x+1)(x+1)^{-1}.
\]
Since $w^{-1}=x^{-1}+(x+1)^{-1}$, additivity and the reduced identity at $w$ give
\[
\begin{aligned}
        h(w)
        &=w^n h(w^{-1})w\\
        &=(x+1)^nA(x+1)+x^n h(x+1)x.
\end{aligned}
\]
Finally, $h(w)=h(x^2)+h(x)$.  This proves \eqref{eq:inverse-free}, and \eqref{eq:inverse-free-c} follows from $h(x+1)=h(x)+c$.
\end{proof}

\begin{remark}\label{rem:inverse-free-specializations}
Two specializations will be useful.  If $c\in Z$ and $n=2^s$, then \eqref{eq:inverse-free-c} reduces to
\begin{equation}\label{eq:inverse-free-power-n}
        h(x^2)=x^n h(x)+h(x)x+cx^N.
\end{equation}
If $c\in Z$ and $N=2^r$, so $n=2^r-1$, then every binomial coefficient $\binom nk$ is odd, and \eqref{eq:inverse-free-c} becomes
\begin{equation}\label{eq:inverse-free-power-N}
\begin{aligned}
        h(x^2)={}&x^n h(x)+h(x)x
        +\sum_{k=1}^{n-1}\bigl(x^kh(x)x+x^kh(x)\bigr)+cx^N.
\end{aligned}
\end{equation}
\end{remark}

\subsection{The noncommutative case for \texorpdfstring{$n=2$}{n=2}}

We first record the reduction available without any hypothesis on the center or central dimension.

\begin{proposition}\label{prop:n2-reduction}
Let $D$ be noncommutative, and suppose additive maps $f,g:D\to D$ satisfy
\[
        f(x)x^{-1}+x^2g(x^{-1})=0\qquad (x\in D^\times).
\]
Then $f=g=h$ and $h(1)=0$.
\end{proposition}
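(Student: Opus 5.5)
The plan is to compute the polynomial $P_c$ of Lemma~\ref{lem:hua-reduction} explicitly for $n=2$ and to use the additivity of $f+g$ directly. This avoids Martindale's theorem, which would force us to assume $[D:Z]=\infty$. As throughout Section~\ref{sec:char-two}, $D$ has characteristic two. In characteristic different from two, Corollary~\ref{cor:even-n} already gives $f=g=0$, so the statement holds trivially there.

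Set $c=f(1)=g(1)$. Lemma~\ref{lem:hua-reduction} gives, for every $a\in D$,
\[
        f(a)+g(a)=c+(1+a)^2c(1+a)+a^2ca .
\]
In characteristic two we have $(1+a)^2=1+a^2$. Expanding,
\[
        (1+a^2)c(1+a)=c+ca+a^2c+a^2ca .
\]
The two copies of $c$ cancel, and the two copies of $a^2ca$ cancel, leaving
\[
        f(a)+g(a)=ca+a^2c\qquad(a\in D).
\]

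Next, $f+g$ is additive, so the map $a\mapsto ca+a^2c$ is additive. The term $ca$ is already additive in $a$, so $a\mapsto a^2c$ must be additive as well. In characteristic two,
\[
        (a+b)^2c+a^2c+b^2c=(ab+ba)c=[a,b]\,c .
\]
Hence $[a,b]c=0$ for all $a,b\in D$. Because $D$ is noncommutative, we can choose $a,b$ with $[a,b]\neq0$. This commutator is invertible in the division ring, so $c=0$.

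With $c=0$ the displayed formula gives $f+g=0$, that is, $f=g$. Calling this common map $h$, we get $h(1)=c=0$, which is the claim. There is no real obstacle in this argument. The only care needed is in the characteristic-two expansion, where the constant terms and the $a^2ca$ terms cancel exactly.
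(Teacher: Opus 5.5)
Your proof is correct and follows the paper's argument essentially verbatim: specialize Lemma~\ref{lem:hua-reduction} to $n=2$ to get $f(a)+g(a)=ca+a^2c$, use additivity of $f+g$ to linearize to $[a,b]c=0$, then use noncommutativity to force $c=0$ and hence $f=g$. Your side remark handling characteristic $\neq 2$ via Corollary~\ref{cor:even-n} is harmless but not needed, since the paper works under the standing characteristic-two assumption of Section~\ref{sec:char-two}.
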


\begin{proof}
Let $c=f(1)=g(1)$.  Lemma \ref{lem:hua-reduction} with $n=2$ gives
\[
        f(a)+g(a)=c+(1+a)^2c(1+a)+a^2ca=ca+a^2c.
\]
Since $f+g$ is additive, linearizing the last expression gives $(ab+ba)c=0$ for all $a,b\in D$.  If $c\neq0$, then $D$ is commutative, a contradiction.  Hence $c=0$, and Lemma \ref{lem:hua-reduction} yields $f+g=0$, so $f=g=h$.
\end{proof}

By \eqref{eq:inverse-free-power-n}, the reduced map in Proposition \ref{prop:n2-reduction} satisfies
\begin{equation}\label{eq:n2-square}
        h(x^2)=x^2h(x)+h(x)x\qquad (x\in D).
\end{equation}

\begin{proposition}\label{prop:n2-cocycle}
Under the hypotheses of Proposition \ref{prop:n2-reduction},
\begin{equation}\label{eq:n2-cocycle}
        [x,y]h(z)+[y,z]h(x)+[z,x]h(y)=0
        \qquad (x,y,z\in D).
\end{equation}
\end{proposition}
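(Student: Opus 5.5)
The plan is to obtain \eqref{eq:n2-cocycle} from \eqref{eq:n2-square} by polarizing twice, using only additivity of $h$ and characteristic two. No generalized-polynomial machinery or Martindale's theorem should be needed. The idea is that the left side of \eqref{eq:n2-square}, as a function of $x$, has a quadratic part controlled by additivity of $h$ together with the map $x\mapsto x^2$. So two successive linearizations should kill everything except the trilinear expression in \eqref{eq:n2-cocycle}.

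\emph{First linearization.} Replace $x$ by $x+y$ in \eqref{eq:n2-square} and subtract the instances at $x$ and at $y$. On the left, additivity gives $h\bigl((x+y)^2\bigr)+h(x^2)+h(y^2)=h([x,y])$. On the right, expand $(x+y)^2=x^2+y^2+[x,y]$ and $h(x+y)=h(x)+h(y)$. After cancelling the $x^2h(x)$, $y^2h(y)$, $h(x)x$ and $h(y)y$ terms, I expect
\[
        h([x,y])=x^2h(y)+y^2h(x)+[x,y]h(x)+[x,y]h(y)+h(x)y+h(y)x
        \qquad (x,y\in D).
\]

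\emph{Second linearization.} Fix $y$ and view both sides as functions of $x$. The left side $x\mapsto h([x,y])$ is additive, since $[\,\cdot\,,y]$ and $h$ are. So replace $x$ by $x+z$ and subtract the instances at $x$ and $z$. The left side then vanishes. On the right, the terms $h(x)y$, $h(y)x$ and $[x,y]h(y)$ are additive in $x$ and drop out. The term $y^2h(x)$ is also additive in $x$ and drops out. Only the genuinely quadratic terms survive:
\begin{itemize}
\item $x^2h(y)$ contributes $[x,z]h(y)$;
\item $[x,y]h(x)$ contributes $[x,y]h(z)+[z,y]h(x)$.
\end{itemize}
Hence $[x,z]h(y)+[x,y]h(z)+[z,y]h(x)=0$. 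Since the characteristic is two, $[x,z]=[z,x]$ and $[z,y]=[y,z]$, so this is exactly \eqref{eq:n2-cocycle}.

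\emph{Main obstacle.} The only delicate point is the bookkeeping in the two expansions. One must keep $h(\cdot)$ in its correct left or right position relative to the noncommuting factors, and check that every term not displayed above is additive in the variable being linearized. Once this is done carefully, the identity holds for all $x,y,z\in D$ with no case distinction. This includes $x=0$ or $1$, since \eqref{eq:n2-square} holds on all of $D$.
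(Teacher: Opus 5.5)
Your proposal is correct and follows the paper's proof essentially verbatim: the paper also linearizes \eqref{eq:n2-square} once to obtain the same formula for $h([x,y])$, then takes the second difference in $x$, where only $x^2h(y)$ and $[x,y]h(x)$ survive. Your bookkeeping of the additive terms and the final relabeling via $[a,b]=[b,a]$ in characteristic two also match the paper.
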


\begin{proof}
Replace $x$ by $x+y$ in \eqref{eq:n2-square}.  Since $(x+y)^2=x^2+y^2+[x,y]$, subtracting the instances of \eqref{eq:n2-square} at $x$ and $y$ gives
\begin{equation}\label{eq:n2-linearized-square}
\begin{aligned}
        h([x,y])={}&x^2h(y)+y^2h(x)+[x,y]h(x)+[x,y]h(y)\\
        &+h(x)y+h(y)x.
\end{aligned}
\end{equation}
Fix $y$, replace $x$ by $x+z$, and take the second difference in $x$.  The left-hand side and all terms on the right except $x^2h(y)$ and $[x,y]h(x)$ are additive in $x$.  Their second differences are, respectively,
\[
        [x,z]h(y)
        \quad\text{and}\quad
        [x,y]h(z)+[z,y]h(x).
\]
Their sum is zero, which is \eqref{eq:n2-cocycle} after relabeling.
\end{proof}

\begin{theorem}\label{thm:n2-char-two}
Let $D$ be a noncommutative division ring of characteristic two.  If additive maps $f,g:D\to D$ satisfy
\[
        f(x)x^{-1}+x^2g(x^{-1})=0\qquad (x\in D^\times),
\]
then $f=g=0$.
\end{theorem}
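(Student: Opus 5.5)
The plan is to extract vanishing of $h$ from the cocycle identity \eqref{eq:n2-cocycle} by feeding it pairs of \emph{commuting} elements, and then to apply Lemma~\ref{lem:elementary-operators} to the resulting elementary-operator identities in the remaining variable. By Proposition~\ref{prop:n2-reduction} we already know $f=g=h$ and $h(1)=0$, so it suffices to show $h=0$.

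\textbf{Step 1 (central elements).} Take $y\in Z$ in \eqref{eq:n2-cocycle}. Then $[x,y]=[y,z]=0$, so $[z,x]h(y)=0$ for all $x,z$. Since $D$ is noncommutative, some $[z,x]\neq0$, and therefore $h(y)=0$. Hence $h$ vanishes on $Z$.

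\textbf{Step 2 (elements of degree at least three).} Let $x\notin Z$ and let $y$ commute with $x$. Then $[x,y]=0$, and \eqref{eq:n2-cocycle} becomes $[y,z]h(x)=[x,z]h(y)$ for all $z$. Expanding the brackets, this reads
\[
        yzh(x)+xzh(y)+z\bigl(yh(x)+xh(y)\bigr)=0\qquad(z\in D).
\]
If $1,x,y$ are linearly independent over $Z$, Lemma~\ref{lem:elementary-operators} gives $h(x)=h(y)=0$. Choosing $y=x^2$ shows that $h(x)=0$ whenever $x$ is algebraic of degree at least $3$ over $Z$, or transcendental. The same conclusion holds whenever the centralizer of $x$ is strictly larger than $Z+Zx$.

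\textbf{Step 3 (quadratic elements, infinite-dimensional or large case).} It remains to treat noncentral $x$ with $x^2\in Z+Zx$. Suppose $D$ is not of dimension $4$ over $Z$. The set of elements of degree at most $2$ is then not all of $D$; otherwise Theorem~\ref{thm:bounded-degree} gives $[D:Z]<\infty$, and a central division algebra of bounded degree $2$ is quaternion, of dimension $4$. I would then write such an $x$ as $x=(x+w)+w$ with both $w$ and $x+w$ of degree at least $3$, or with centralizer larger than $Z+Z(\cdot)$, and conclude $h(x)=0$ by additivity. Finding such a $w$ is a Zariski-density argument when $Z$ is infinite. It needs separate care when $Z$ is finite, but then $D$ is infinite-dimensional by Wedderburn-type reasoning, so transcendental elements abound.

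\textbf{Step 4 (quaternion case, the main obstacle).} If $[D:Z]=4$, every noncentral $x$ is quadratic and its centralizer is exactly $Z+Zx$, so Step 2 gives nothing. Here I would fix noncommuting $x_0,y_0$ and solve \eqref{eq:n2-cocycle} for $h$:
\[
        h(z)=[x_0,y_0]^{-1}\bigl([y_0,z]h(x_0)+[z,x_0]h(y_0)\bigr).
\]
This exhibits $h$ as an explicit elementary operator $z\mapsto\sum a_izb_i$. Substituting this form into \eqref{eq:n2-square}, together with $h(Z)=0$, turns the problem into a finite computation. One can use Lemma~\ref{lem:elementary-operators} over the $4$-dimensional algebra, or tensor with a splitting field to check the identity in $M_2$. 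The aim is to force $h(x_0)=h(y_0)=0$, which gives $h=0$. This quaternion case is where I expect the real work to lie.
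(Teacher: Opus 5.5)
\textbf{Gap: the quaternion case and the finite-center case are not proved.} Your Steps 1 and 2 are correct and agree with the paper: $h$ vanishes on $Z$, and the elementary-operator argument shows that $h(d)\neq0$ forces $C_D(d)=Z+Zd$. After that, the proposal stops being a proof.

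Step 4, which you identify as the main case, ends at ``turns the problem into a finite computation'' without doing the computation. The finite-center half of Step 3 is only asserted. Infinite dimensionality over a finite center does not by itself produce transcendental elements; you would need Jacobson's $x^{n(x)}=x$ commutativity theorem. Even then you must exhibit $w$, for instance by showing that one of $x+t$, $x+t^2$, $x+t+t^2$ is not quadratic. The density argument for infinite $Z$ is fine, but it covers only what is left once these two cases are removed, so the theorem is not established.

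\textbf{The missing idea.} Put $y=\lambda\in Z$ in the linearized square identity \eqref{eq:n2-linearized-square}. Since $[x,\lambda]=0$ and $h(\lambda)=0$ by your Step 1, the identity collapses to $(\lambda^2+\lambda)h(x)=0$. Hence $h\neq0$ forces $Z=\F_2$.

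Your Step 4 formula leads to the same place if you carry it out. The identity $h(z)=[x_0,y_0]^{-1}\bigl([y_0,z]h(x_0)+[z,x_0]h(y_0)\bigr)$ holds in every noncommutative $D$ and shows that $h$ is $Z$-linear. Comparing \eqref{eq:n2-square} at $x$ and at $\lambda x$ then gives $(\lambda^3+\lambda^2)x^2h(x)=0$.

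This observation disposes of the quaternion case immediately, since a noncommutative finite-dimensional division algebra has infinite center by Wedderburn's little theorem. It also makes your density argument unnecessary.

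\textbf{The case that remains: $Z=\F_2$ and $[D:Z]=\infty$.} Here your Step 2 gives $C_D(d)\cong\F_4$, hence $d^2=d+1$, for every $d\notin\ker h$, and this case still needs a real argument. The paper finishes as follows.
\begin{enumerate}[label=\textup{(\roman*)}]
\item By \eqref{eq:n2-square}, $\ker h$ is closed under squaring.
\item Fix $b\notin\ker h$. For $0\neq u\in\ker h$, the element $b+u$ lies outside $\ker h$, so $(b+u)^2=b+u+1$, which gives $[b,u]=u^2+u$.
\item Apply (ii) to $u^2\in\ker h$ and use that $[b,\cdot]$ is a derivation. This gives $u^4=u^2$, so $u=1$. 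Thus $\ker h=\F_2$.
\item Every $x\notin\F_2$ therefore satisfies $x^2=x+1$. Choosing $x,y$ with $x,y,x+y\notin\F_2$ gives $[x,y]=1$, and then $0=[x,y^2]=[x,y]=1$, a contradiction.
\end{enumerate}
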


\begin{proof}
By Proposition \ref{prop:n2-reduction}, $f=g=h$ and $h(1)=0$.  Suppose $h\neq0$.

First, $h$ vanishes on $Z$.  Indeed, putting $z=\lambda\in Z$ in \eqref{eq:n2-cocycle} gives $[x,y]h(\lambda)=0$.  Some commutator $[x,y]$ is nonzero, so $h(\lambda)=0$.

We next show that $Z=\F_2$.  Put $y=\lambda\in Z^\times$ in \eqref{eq:n2-linearized-square}.  Using $h(\lambda)=0$ gives
\[
        (\lambda^2+\lambda)h(x)=0.
\]
If $\lambda\notin\F_2$, the central coefficient is nonzero, forcing $h=0$.  Thus no such $\lambda$ exists.

Choose $b\notin\ker h$ and put $c=h(b)\neq0$.  We first record a consequence that will also be applied to $b+u$ below.  Let $d\in D$ satisfy $h(d)\neq0$, and put $e=h(d)$.  Since $h$ vanishes on $Z$, we have $d\notin Z$.  If $x,z\in C_D(d)$, then \eqref{eq:n2-cocycle} applied to $(x,z,d)$ gives $[x,z]e=0$; hence $C_D(d)$ is commutative.  If merely $x\in C_D(d)$ and $z\in D$, the same identity gives
\[
        [x,z]e=[z,d]h(x),
\]
or
\begin{equation}\label{eq:n2-elementary-operator}
        xze+z\bigl(xe+dh(x)\bigr)+dzh(x)=0
        \qquad(z\in D).
\end{equation}
If $1,x,d$ were linearly independent over $Z$, Lemma \ref{lem:elementary-operators} applied to \eqref{eq:n2-elementary-operator} would give $e=0$.  Therefore $1,x,d$ are dependent.  Since $d\notin Z$, this implies $x\in Z+Zd$.  Consequently, for every $d\notin\ker h$,
\[
        C_D(d)=Z+Zd=\F_2+\F_2d\cong\F_4,
\]
and hence $d^2=d+1$.  In particular, $b^2=b+1$.

We now prove $\ker h=\F_2$.  Equation \eqref{eq:n2-square} shows that $u\in\ker h$ implies $u^2\in\ker h$.  Let $0\neq u\in\ker h$.  Then $h(b+u)=c\neq0$, so the general conclusion just proved, applied to $d=b+u$, gives $(b+u)^2=b+u+1$.  Using $b^2=b+1$, we obtain
\begin{equation}\label{eq:n2-kernel-commutator}
        [b,u]=u^2+u.
\end{equation}
Applying the same identity to $u^2\in\ker h$ gives $[b,u^2]=u^4+u^2$.  On the other hand, the inner derivation $[b,\cdot]$ satisfies
\[
        [b,u^2]=[b,u]u+u[b,u]=0
\]
by \eqref{eq:n2-kernel-commutator}.  Thus $u^4=u^2$, and the division-ring property gives $u^2=1$.  In characteristic two, $(u+1)^2=0$, so $u=1$.

It follows that every $x\in D\setminus\F_2$ satisfies $x^2=x+1$.  The ring $D$ is infinite by Wedderburn's little theorem.  Choose
\[
        x\notin\F_2,
        \qquad
        y\notin\{0,1,x,x+1\}.
\]
Then $x,y,x+y\notin\F_2$, and comparison of $(x+y)^2=(x+y)+1$ with $x^2=x+1$ and $y^2=y+1$ gives $[x,y]=1$.  Hence
\[
        [x,y^2]=[x,y]y+y[x,y]=0,
\]
whereas $y^2=y+1$ gives $[x,y^2]=[x,y]=1$, a contradiction.  Therefore $h=0$.
\end{proof}

\begin{remark}\label{rem:n2-scope}
Noncommutativity is essential: Example \ref{ex:F4} contains nonzero solutions over $\F_4$, including reduced solutions $h(x)=a(x+x^2)$ with $h(1)=0$.  Thus Theorem \ref{thm:n2-char-two} completes the $n=2$ case for noncommutative division rings, while Section~\ref{sec:finite-fields} classifies the finite commutative case.  Problem \ref{prob:infinite-field-additive} remains for arbitrary additive maps on infinite fields.
\end{remark}

\subsection{Automatic semilinearity and infinite centers}

\begin{lemma}\label{lem:central-square-semilinearity}
Assume $[D:Z]=\infty$, and let $h$ satisfy \eqref{eq:one-map-reduced}.  Then
\begin{equation}\label{eq:central-square-semilinearity}
        h(\lambda^2x)=\lambda^N h(x)
        \qquad(\lambda\in Z^\times,\ x\in D).
\end{equation}
\end{lemma}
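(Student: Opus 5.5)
The plan is to manufacture a new solution pair of \eqref{eq:main} out of $h$ by central rescaling, and then let the one-map reduction of Theorem~\ref{thm:char-two-gpi-reduction} force the two rescaled maps to coincide. Since $[D:Z]=\infty$, that theorem says that \emph{every} pair of additive solutions $(f,g)$ satisfies $f=g$. This is the only place the hypothesis $[D:Z]=\infty$ enters.

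Fix $\lambda\in Z^\times$ and define
\[
        F(x)=\lambda^{-1}h(\lambda x),
        \qquad
        G(x)=\lambda^{n}h(\lambda^{-1}x)
        \qquad (x\in D).
\]
Both maps are additive, because $h$ is additive and multiplication by a fixed central scalar is additive.

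Next we apply \eqref{eq:one-map-reduced} at the point $\lambda x$ for $x\in D^\times$. Since $\lambda$ is central, $(\lambda x)^{-1}=\lambda^{-1}x^{-1}$ and $(\lambda x)^n=\lambda^n x^n$, so we get
\[
        h(\lambda x)\lambda^{-1}x^{-1}+\lambda^n x^n h(\lambda^{-1}x^{-1})=0.
\]
Moving the central scalars to the front, this says exactly
\[
        F(x)x^{-1}+x^nG(x^{-1})=0 \qquad (x\in D^\times).
\]
Thus $(F,G)$ is an additive solution of \eqref{eq:main}.

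By Theorem~\ref{thm:char-two-gpi-reduction}, applied to the pair $(F,G)$, we conclude $F=G$. That is,
\[
        \lambda^{-1}h(\lambda x)=\lambda^n h(\lambda^{-1}x),
        \qquad\text{equivalently}\qquad
        h(\lambda x)=\lambda^{N}h(\lambda^{-1}x)
        \qquad (x\in D).
\]
Replacing $x$ by $\lambda x$ gives $h(\lambda^2x)=\lambda^N h(x)$, which is \eqref{eq:central-square-semilinearity}. The case $x=0$ is trivial.

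There is no real obstacle here. The only point needing care is that Theorem~\ref{thm:char-two-gpi-reduction} is stated for an arbitrary additive solution pair, not just for the original pair $(f,g)$. Its proof uses only additivity and \eqref{eq:main}, through Lemma~\ref{lem:hua-reduction} and the GPI argument, so it applies verbatim to $(F,G)$. The central scalars must also be moved consistently past $h$; they cannot be pulled through $h$ itself, which is precisely why the conclusion involves $\lambda^2$ rather than $\lambda$.
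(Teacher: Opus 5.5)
Your proof is correct and is essentially the paper's argument. The paper uses the pair $f_\lambda(x)=h(\lambda x)$, $g_\lambda(x)=\lambda^N h(\lambda^{-1}x)$, which is just $\lambda$ times your $(F,G)$, and then, as you do, invokes Theorem~\ref{thm:char-two-gpi-reduction} to get $f_\lambda=g_\lambda$ and substitutes $\lambda x$ for $x$.
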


\begin{proof}
Fix $\lambda\in Z^\times$ and define
\[
        f_\lambda(x)=h(\lambda x),
        \qquad
        g_\lambda(x)=\lambda^N h(\lambda^{-1}x).
\]
Evaluating \eqref{eq:one-map-reduced} at $\lambda x$ and multiplying by $\lambda$ shows that $(f_\lambda,g_\lambda)$ satisfies \eqref{eq:main}.  Since $[D:Z]=\infty$, Theorem \ref{thm:char-two-gpi-reduction} gives $f_\lambda=g_\lambda$.  Thus
\[
        h(\lambda x)=\lambda^N h(\lambda^{-1}x).
\]
Replacing $x$ by $\lambda x$ proves \eqref{eq:central-square-semilinearity}.
\end{proof}

\begin{theorem}\label{thm:char-two-infinite-center-nonpower}
Suppose $[D:Z]=\infty$, the center $Z$ is infinite, and $N$ is not a power of two.  Then every additive solution of \eqref{eq:main} is zero.
\end{theorem}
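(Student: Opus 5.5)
The plan is to reduce to the one-map identity, use the central-square semilinearity of Lemma~\ref{lem:central-square-semilinearity}, and then exploit the fact that squaring is additive in characteristic two. This lets additivity of $h$ test the exponent $N$ against sums of central scalars.

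First, since $[D:Z]=\infty$, Theorem~\ref{thm:char-two-gpi-reduction} applies. It gives $f=g=h$, where $h$ satisfies \eqref{eq:one-map-reduced}, and $h(1)=0$ because $N$ is not a power of two. So it suffices to show $h=0$. By Lemma~\ref{lem:central-square-semilinearity},
\[
        h(\lambda^2x)=\lambda^N h(x)\qquad(\lambda\in Z^\times,\ x\in D),
\]
and this holds trivially for $\lambda=0$ as well, since $h(0)=0$.

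Next, fix $x\in D$ and $\lambda\in Z\setminus\{0,1\}$. In characteristic two, $(\lambda+1)^2=\lambda^2+1$, and $\lambda+1\in Z^\times$. Additivity of $h$ then gives
\[
        (\lambda+1)^N h(x)=h\bigl((\lambda^2+1)x\bigr)=h(\lambda^2x)+h(x)=(\lambda^N+1)h(x).
\]
Hence $Q(\lambda)h(x)=0$, where
\[
        Q(T)=(T+1)^N+T^N+1\in\F_2[T].
\]
Since $N$ is not a power of two, Lucas' theorem (as in the proof of Lemma~\ref{lem:gpi-polynomial-nonzero-corrected}) provides some $d$ with $1\le d\le N-1$ and $\binom Nd$ odd. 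Therefore $Q$ is a nonzero polynomial of degree at most $N-1$.

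Finally, because $Z$ is infinite, we can choose $\lambda\in Z\setminus\{0,1\}$ with $Q(\lambda)\neq0$. Then $Q(\lambda)$ is a nonzero central element, hence invertible in $D$, so $h(x)=0$. As $x$ was arbitrary, $h=0$, and therefore $f=g=0$.

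The only delicate point is bookkeeping. The semilinearity lemma is stated for $\lambda\in Z^\times$, so I would exclude $\lambda=0,1$ to ensure both $\lambda$ and $\lambda+1$ lie in $Z^\times$. The main obstacle, which is really just a check, is confirming that $Q\neq0$ exactly when $N$ is not a power of two; this is the same Lucas-theorem fact already used earlier in the paper.
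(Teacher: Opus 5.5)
Your proof is correct and follows essentially the paper's route. The paper uses $(\lambda+\mu)^N h(x)=(\lambda^N+\mu^N)h(x)$ for central $\lambda,\mu$ and a non-root of $XY(X+Y)\bigl((X+Y)^N+X^N+Y^N\bigr)$; your choice $\mu=1$ is just the dehomogenized form of that step, and by homogeneity the two versions are equivalent.
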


\begin{proof}
By Theorem \ref{thm:char-two-gpi-reduction}, $f=g=h$.  For $\lambda,\mu\in Z^\times$ with $\lambda+\mu\neq0$, additivity and Lemma \ref{lem:central-square-semilinearity} give
\[
\begin{aligned}
        (\lambda+\mu)^N h(x)
        &=h((\lambda+\mu)^2x)\\
        &=h(\lambda^2x)+h(\mu^2x)
        =(\lambda^N+\mu^N)h(x).
\end{aligned}
\]
Because $N$ is not a power of two, Lucas' theorem shows that
\[
        P(X,Y)=(X+Y)^N+X^N+Y^N
\]
is a nonzero polynomial over $\F_2$.  Since $Z$ is infinite, the nonzero polynomial $XY(X+Y)P(X,Y)$ has a non-root $(\lambda,\mu)\in Z^2$.  For this pair the preceding identity forces $h(x)=0$ for every $x$.
\end{proof}

\begin{theorem}\label{thm:char-two-infinite-center-power}
Suppose $[D:Z]=\infty$, the center $Z$ is infinite, and $N=2^r$ with $r\geq2$.  Then every additive solution of \eqref{eq:main} is zero.
\end{theorem}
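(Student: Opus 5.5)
The plan is to combine the reduction of Theorem~\ref{thm:char-two-gpi-reduction}, the automatic Frobenius semilinearity of Lemma~\ref{lem:central-square-semilinearity}, and the inverse-free identity of Remark~\ref{rem:inverse-free-specializations}. The aim is first to show that $h$ commutes with its argument, then that $h$ is central-valued, and finally that any nonzero $h$ forces a bounded-degree condition that Theorem~\ref{thm:bounded-degree} rules out.

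\textbf{Step 1: reduction and semilinearity.} Since $[D:Z]=\infty$, Theorem~\ref{thm:char-two-gpi-reduction} gives $f=g=h$ with $c=h(1)\in Z$. Put $K=\{\lambda^2:\lambda\in Z\}$. This is an infinite subfield of $Z$, being the image of the Frobenius map of the infinite field $Z$. Because $\lambda^N=(\lambda^2)^{N/2}$, Lemma~\ref{lem:central-square-semilinearity} reads
\[
h(\mu x)=\mu^{N/2}h(x)\qquad(\mu\in K,\ x\in D).
\]

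\textbf{Step 2: $h$ commutes with its argument.} Substitute $\mu x$ with $\mu\in K^\times$ into \eqref{eq:inverse-free-power-N}. Since $\mu^2\in K$, the left side becomes $\mu^N h(x^2)$, and $cx^N$ also scales by $\mu^N$. A term $x^k h(x) x^e$ scales by $\mu^{k+e+N/2}$. Lemma~\ref{lem:laurent-independence-infinite} (with $V=D$ over $K$) then separates the identity by the total degree $d=k+e+N/2$.

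The lowest degree is $d=1+N/2$. It receives exactly the two terms $h(x)x$ and $x\,h(x)$ (the latter from $k=1$ in the sum). It cannot collide with the degree $N$ of the left side, because $r\geq2$ gives $1+N/2<N$. Hence
\[
[x,h(x)]=0\qquad(x\in D).
\]

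\textbf{Step 3: $h$ is central-valued.} Since $D$ is noncommutative with $[D:Z]=\infty$, the standard description of additive commuting maps (Bre\v sar; cf.\ \cite{BCM}) gives
\[
h(x)=\lambda x+\zeta(x),\qquad \lambda\in Z,\ \zeta:D\to Z\ \text{additive}.
\]
Insert this into \eqref{eq:one-map-reduced} and multiply on the right by $x$. This gives
\[
\lambda x^{n+1}+x^{n+1}\zeta(x^{-1})+\lambda x+\zeta(x)=0 \qquad(x\in D^\times).
\]
If $\lambda\neq0$, this is a nonzero polynomial relation of degree $N$ over $Z$ for every $x$. Theorem~\ref{thm:bounded-degree} then gives $[D:Z]<\infty$, a contradiction. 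So $\lambda=0$ and $h$ is central-valued.

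\textbf{Step 4: $h=0$.} With central values, \eqref{eq:one-map-reduced} gives $h(x)=x^N h(x^{-1})$. Hence whenever $h(x)\neq0$ we also have $h(x^{-1})\neq0$, and therefore $x^N=h(x)h(x^{-1})^{-1}\in Z$.

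Suppose $h(x_0)\neq0$ and let $y\in D$ be arbitrary. For $\mu\in K$,
\[
h(y+\mu x_0)=h(y)+\mu^{N/2}h(x_0),
\]
which is nonzero for all but at most one $\mu$. So $(y+\mu x_0)^N\in Z$ for infinitely many $\mu\in K$. Expand $(y+\mu x_0)^N$ as a polynomial in $\mu$ with coefficients in $D$, and project to the $K$-space $D/Z$. Lemma~\ref{lem:laurent-independence-infinite} then forces every coefficient into $Z$; in particular the constant term gives $y^N\in Z$. Thus every element of $D$ is algebraic of degree at most $N$ over $Z$, and Theorem~\ref{thm:bounded-degree} again contradicts $[D:Z]=\infty$. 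Hence $h=0$, and $f=g=0$.

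\textbf{Main obstacle.} I expect Step~3 to be the delicate point: invoking the commuting-map theorem in this additive setting, and verifying that the resulting identity is a genuinely nontrivial bounded-degree relation. If the commuting-map route is problematic, the fallback is to linearize $[x,h(x)]=0$ to $[x,h(y)]=[h(x),y]$ and argue directly with Lemma~\ref{lem:elementary-operators}. The degree bookkeeping in Step~2 also needs care, to confirm that no other term lands in degree $1+N/2$.
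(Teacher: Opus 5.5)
Your proposal is correct, and Steps 1, 2 and 4 follow the paper closely.

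\textbf{Steps 2 and 4.} Step 2 is the paper's first step written in the parameter $\mu=\lambda^2$: you isolate the coefficient of $\mu^{1+N/2}$, where the paper isolates that of $\lambda^{N+2}$. Step 4 is a mirror image of the paper's last step. The paper first uses Theorem~\ref{thm:bounded-degree} to pick $x_0$ with $x_0^N\notin Z$, so that $h(x_0)=0$. It then perturbs a point $y$ with $h(y)\neq0$ to $y+\nu^2x_0$, on which $h$ is unchanged, and extracts the top coefficient $x_0^N$. You instead perturb an arbitrary $y$ by $\mu x_0$ with $h(x_0)\neq0$. Injectivity of $\mu\mapsto\mu^{N/2}$ keeps $h(y+\mu x_0)\neq0$ for all but one $\mu$, and you extract the constant coefficient $y^N$, using Theorem~\ref{thm:bounded-degree} only at the end.

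\textbf{Step 3 is genuinely different.} The paper uses no structure theorem for commuting maps. It linearizes $[h(x),x]=0$ to $[h(x),y]=[h(y),x]$ and replaces $x$ by $\lambda^2x$, which gives $\lambda^N[h(x),y]=\lambda^2[h(x),y]$. It then chooses $\lambda\in Z$ with $\lambda^{N-2}\neq1$. This takes two lines and is self-contained, but it relies on central scaling and on $Z$ being infinite.

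Your route imports Bre\v sar's theorem on additive commuting maps of prime rings. For commuting, as opposed to merely centralizing, maps it carries no characteristic restriction, and it applies here since $D$ is noncommutative. You then spend an extra application of Theorem~\ref{thm:bounded-degree} to force $\lambda=0$. This is heavier, but it uses no scaling at all. So it would give central-valuedness of any commuting solution whenever $[D:Z]=\infty$, whatever the size of $Z$.

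\textbf{A harmless slip.} Multiplying the reduced identity on the right by $x$ actually gives
\[
\lambda x+\zeta(x)+\lambda x^{n}+\zeta(x^{-1})x^{N}=0,
\]
with $\lambda x^{n}$ rather than $\lambda x^{n+1}$. Since $1<n<N$, the coefficients of $T$ and $T^n$ in $\zeta(x^{-1})T^N+\lambda T^n+\lambda T+\zeta(x)$ are both $\lambda\neq0$. So each $x$ is still a root of a nonzero polynomial over $Z$ of degree at most $N$, and your conclusion stands.
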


\begin{proof}
By Theorem \ref{thm:char-two-gpi-reduction}, $f=g=h$ and $c=h(1)\in Z$.  Put $A=h(x)$.  We proceed in three steps.

First, $A$ commutes with $x$.  Apply \eqref{eq:inverse-free-power-N} at $X=\lambda^2x$, where $\lambda\in Z^\times$.  Lemma \ref{lem:central-square-semilinearity} gives
\[
        h(X)=\lambda^N A,
        \qquad
        h(X^2)=\lambda^{2N}h(x^2).
\]
Hence
\begin{equation}\label{eq:power-case-scaled}
\begin{aligned}
\lambda^{2N}h(x^2)={}&\lambda^{3N-2}x^nA+\lambda^{N+2}Ax\\
&+\sum_{k=1}^{n-1}\left(
\lambda^{N+2k+2}x^kAx+\lambda^{N+2k}x^kA\right)
+\lambda^{2N}cx^N.
\end{aligned}
\end{equation}
Compare this with $\lambda^{2N}$ times \eqref{eq:inverse-free-power-N}.  By Lemma \ref{lem:laurent-independence-infinite}, each coefficient of a power of $\lambda$ vanishes.  Since $N\geq4$, the coefficient of $\lambda^{N+2}$ comes only from $Ax$ and from the term $x^kA$ with $k=1$.  Therefore
\[
        Ax+xA=0,
        \qquad\text{or equivalently}\qquad
        h(x)x=xh(x).
\]

Second, $h$ is central-valued.  Linearizing $[h(x),x]=0$ gives
\[
        [h(x),y]=[h(y),x]
        \qquad(x,y\in D).
\]
Using \eqref{eq:central-square-semilinearity}, for every $\lambda\in Z^\times$ we obtain
\[
\begin{aligned}
        \lambda^N[h(x),y]
        &=[h(\lambda^2x),y]
         =[h(y),\lambda^2x]
         =\lambda^2[h(x),y].
\end{aligned}
\]
Because $Z$ is infinite and $N\geq4$, choose $\lambda$ with $\lambda^{N-2}\neq1$.  It follows that $[h(x),y]=0$ for all $x,y$, so $h(D)\subseteq Z$.

Finally, centrality of the values in \eqref{eq:one-map-reduced} gives
\[
        h(x)=h(x^{-1})x^N.
\]
Thus $h(x)\neq0$ implies $x^N\in Z$.  If $x^N\in Z$ for every $x\in D$, then every element of $D$ is algebraic over $Z$ of degree at most $N$, contrary to Theorem \ref{thm:bounded-degree} and $[D:Z]=\infty$.  Choose $x_0\in D$ with $x_0^N\notin Z$; then $h(x_0)=0$.

If $h(y)\neq0$ for some $y\in D$, then for every $\nu\in Z^\times$,
\[
        h(y+\nu^2x_0)=h(y)\neq0,
\]
and hence $(y+\nu^2x_0)^N\in Z$.  Since $\nu^2$ is central, write
\[
        (y+\nu^2x_0)^N=\sum_{k=0}^N(\nu^2)^kW_k,
\]
where $W_k$ is the sum of all words with $k$ letters $x_0$ and $N-k$ letters $y$.  The set of squares in the infinite field $Z$ is infinite.  Applying Lemma \ref{lem:laurent-independence-infinite} in the $Z$-vector space $D/Z$ gives $W_k\in Z$ for every $k$.  In particular, $W_N=x_0^N\in Z$, a contradiction.  Hence $h=0$.
\end{proof}

\begin{corollary}\label{cor:char-two-infinite-center}
Let $D$ be a division ring of characteristic two with infinite center and $[D:Z(D)]=\infty$.  Then, for every $n\geq2$, the identity \eqref{eq:main} forces $f=g=0$.
\end{corollary}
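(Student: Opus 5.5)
The corollary follows by splitting into cases according to the shape of $N=n+1$, since the preceding theorems already cover every case under the standing hypotheses that $\charac D=2$, $Z=Z(D)$ is infinite and $[D:Z]=\infty$.

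First, suppose $N$ is not a power of two. Then Theorem~\ref{thm:char-two-infinite-center-nonpower} applies directly and gives $f=g=0$.

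Otherwise $N=2^r$ for some $r$. Because $n\geq2$, we have $N\geq3$, so in fact $N\geq4$ and $r\geq2$. Theorem~\ref{thm:char-two-infinite-center-power} then yields $f=g=0$. Note that the two theorems are stated for the pair $(f,g)$ itself: each first invokes Theorem~\ref{thm:char-two-gpi-reduction} to pass to $f=g=h$. So no separate reduction step is needed here.

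The only point requiring care is the bookkeeping that excludes $N=2$ (that is, $r=1$, which would mean $n=1$). This exclusion is what makes the hypothesis $r\geq2$ of Theorem~\ref{thm:char-two-infinite-center-power} available, and it follows immediately from $n\geq2$. There is no genuine obstacle: the corollary is simply the union of the two cases.
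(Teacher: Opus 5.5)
Your proposal is correct and matches the paper's proof: split on whether $N=n+1$ is a power of two, and apply Theorem~\ref{thm:char-two-infinite-center-nonpower} or Theorem~\ref{thm:char-two-infinite-center-power}. In the second case you correctly use $N\geq3$ to get $N\geq4$, so the hypothesis $r\geq2$ holds.
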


\begin{proof}
Here $N=n+1\geq3$.  If $N$ is not a power of two, apply Theorem \ref{thm:char-two-infinite-center-nonpower}.  If it is a power of two, then $N\geq4$, and Theorem \ref{thm:char-two-infinite-center-power} applies.
\end{proof}

\section{Finite centers in characteristic two}\label{sec:finite-center-char-two}

Throughout this section $D$ is a division ring with $\charac D = 2$, $Z = Z(D)$, $n \ge 2$ and
$N = n+1$. As in Section~\ref{sec:char-two}, we write $[x,y] = xy+yx$, and all binomial
coefficients are read modulo $2$.

\subsection{A compact form of the inverse-free identity}

The inverse-free identity \eqref{eq:inverse-free-c} becomes more useful after the two occurrences
of $(x+1)^n$ are expanded and the resulting sums are reindexed. In the resulting normal form,
each bracket is homogeneous of a single degree in $x$ and is therefore compatible with the
central weight decomposition.

\begin{lemma}\label{L:compact}
Let $h : D \to D$ be additive and satisfy \eqref{eq:one-map-reduced}, and put $c = h(1)$. Then,
for every $x \in D$,
\begin{equation}\label{E:compact}
h(x^2) \;=\; \sum_{j=1}^{n} A_j(x) \;+\; x^{n} c\, x,
\qquad
A_j(x) \;:=\; \binom{n}{j}\, x^{j} h(x) \;+\; \binom{n}{j-1}\, x^{j-1} h(x)\, x .
\end{equation}
If $c \in Z$, the last term is $c\,x^{N}$.
\end{lemma}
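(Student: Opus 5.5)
We start from the inverse-free identity \eqref{eq:inverse-free-c} of Lemma~\ref{lem:inverse-free}, which already holds for every $x\in D$:
\[
h(x^2)=(x+1)^n h(x)(x+1)+x^n h(x)x+h(x)+x^n c x .
\]
The whole task is to expand the first term and regroup it by degree in $x$. Since $x$ commutes with $1$, the binomial theorem in characteristic two gives $(x+1)^n=\sum_{k=0}^{n}\binom nk x^k$. Hence
\[
(x+1)^n h(x)(x+1)=\sum_{k=0}^{n}\binom nk x^k h(x)+\sum_{k=0}^{n}\binom nk x^k h(x)x .
\]

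Next we cancel the boundary terms. In the first sum, the $k=0$ term is $h(x)$; it cancels the separate summand $h(x)$ because we are in characteristic two. In the second sum, the $k=n$ term is $x^n h(x)x$; it cancels the separate summand $x^nh(x)x$ in the same way. What remains is
\[
\sum_{k=1}^{n}\binom nk x^k h(x)+\sum_{k=0}^{n-1}\binom nk x^k h(x)x+x^ncx .
\]

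Now we reindex the second sum by $j=k+1$, which runs from $1$ to $n$. It becomes $\sum_{j=1}^{n}\binom n{j-1}x^{j-1}h(x)x$. Pairing the $j$-th term of each sum gives exactly $A_j(x)=\binom nj x^jh(x)+\binom n{j-1}x^{j-1}h(x)x$, and this is \eqref{E:compact}. Finally, if $c\in Z$, then $x^ncx=cx^{n+1}=cx^N$.

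There is no real obstacle; the only care needed is bookkeeping of the index ranges, so that precisely the $k=0$ term of the left-factor sum and the $k=n$ term of the right-factor sum are absorbed. No case split such as $x=0,1$ is needed, since Lemma~\ref{lem:inverse-free} already covers all $x\in D$.
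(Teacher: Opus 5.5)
Your proposal is correct and follows the paper's proof essentially step for step. Like the paper, you expand $(x+1)^n$ binomially in \eqref{eq:inverse-free-c}, cancel the $k=0$ term of the $x^kh(x)$ sum and the $k=n$ term of the $x^kh(x)x$ sum against the isolated $h(x)$ and $x^nh(x)x$, and then reindex the latter sum by $j=k+1$.
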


\begin{proof}
Write $b_k = \binom{n}{k} \bmod 2$. Then
\[
\begin{aligned}
(x+1)^n&=\sum_{k=0}^{n} b_k x^k,\\
(x+1)^n h(x)(x+1)
&=\sum_{k=0}^{n} b_k x^k h(x)x+\sum_{k=0}^{n} b_k x^k h(x).
\end{aligned}
\]
Substituting into \eqref{eq:inverse-free-c},
\[
h(x^2) = \sum_{k=0}^{n} b_k x^k h(x)\,x \;+\; \sum_{k=0}^{n} b_k x^k h(x) \;+\; x^n h(x)\,x \;+\; h(x) \;+\; x^n c\,x .
\]
Since $b_n = b_0 = 1$, the term $k=n$ of the first sum cancels the isolated $x^n h(x)x$ and the
term $k=0$ of the second sum cancels the isolated $h(x)$, leaving
\[
h(x^2) = \sum_{k=0}^{n-1} b_k\, x^k h(x)\, x \;+\; \sum_{k=1}^{n} b_k\, x^k h(x) \;+\; x^n c\, x .
\]
Reindexing the first sum by $j = k+1$ gives \eqref{E:compact}.
\end{proof}

\begin{remark}
For $n = 2^s$ only the brackets $j=1$ and $j=n$ survive, and \eqref{E:compact} reduces to
\eqref{eq:inverse-free-power-n}. If $N = 2^r$, all $\binom{n}{j}$ are odd, and
\eqref{eq:inverse-free-power-N} is recovered. Evaluating \eqref{E:compact} at $x=1$ gives
$c = (2^n-1)c + (2^n-1)c + c$, as expected.
\end{remark}

Pascal's rule gives the following consequence: the two terms in $A_j$ combine whenever the values
of $h$ commute with their arguments.

\begin{corollary}\label{C:pascal}
If $h$ satisfies \eqref{eq:one-map-reduced}, $c = h(1) \in Z$, and $[h(x),x] = 0$ for all
$x \in D$, then
\[
h(x^2) \;=\; \bigl[(x+1)^{N} + x^{N} + 1\bigr]\, h(x) \;+\; c\,x^{N} \qquad (x \in D).
\]
In particular $h(x^2) = c\,x^{N}$ whenever $N$ is a power of two.
\end{corollary}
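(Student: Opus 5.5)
Starting from the compact form \eqref{E:compact} of Lemma~\ref{L:compact}, we use the hypothesis $[h(x),x]=0$ to move the trailing factor $x$ in the second term of each bracket to the left. This gives $x^{j-1}h(x)x = x^{j}h(x)$, so each bracket becomes
\[
A_j(x)=\Bigl(\tbinom{n}{j}+\tbinom{n}{j-1}\Bigr)x^{j}h(x)=\tbinom{N}{j}x^{j}h(x)
\]
by Pascal's rule, since $N=n+1$. Since $c\in Z$, the final term $x^nc\,x$ equals $c\,x^{N}$, as noted in Lemma~\ref{L:compact}. Hence
\[
h(x^2)=\sum_{j=1}^{n}\tbinom{N}{j}x^{j}h(x)+c\,x^{N}.
\]

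Next we identify the coefficient polynomial. The elements $x^j$ all lie in the commutative subring generated by $x$, so the binomial theorem applies there. It gives $(x+1)^N=\sum_{j=0}^{N}\binom{N}{j}x^j$. Removing the terms $j=0$ and $j=N$, both with coefficient $1$, yields $\sum_{j=1}^{n}\binom{N}{j}x^j=(x+1)^N+x^N+1$, because signs are irrelevant in characteristic two. This proves the displayed formula.

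For the final assertion, suppose $N=2^r$. Then $(x+1)^N=x^N+1$ by the Frobenius identity in characteristic two, because $x$ and $1$ commute. The coefficient therefore vanishes and $h(x^2)=c\,x^N$.

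There is no real obstacle. The only point requiring care is that the commutation hypothesis is used for the pair $h(x)$ and $x$ only, which is exactly what is needed to rewrite $x^{j-1}h(x)x$ as $x^{j}h(x)$.
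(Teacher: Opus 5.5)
Your proposal is correct and essentially the paper's own proof: you use $[h(x),x]=0$ to rewrite $x^{j-1}h(x)x$ as $x^{j}h(x)$ in each bracket of \eqref{E:compact}, combine the coefficients by Pascal's rule into $\binom{N}{j}x^{j}h(x)$, and identify $\sum_{j=1}^{n}\binom{N}{j}x^{j}$ with $(x+1)^{N}+x^{N}+1$ via the binomial expansion. Your treatment of the power-of-two case via $(x+1)^{2^r}=x^{2^r}+1$ is also exactly the intended argument.
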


\begin{proof}
Commutation and Pascal's rule turn the $j$-th bracket into
\[
\left(\binom{n}{j}+\binom{n}{j-1}\right)x^jh(x)=\binom{N}{j}x^jh(x).
\]
Summing over $j$ gives
\[
\sum_{j=1}^{n}\binom{N}{j}x^{j}=(x+1)^{N}+x^{N}+1.
\]
\end{proof}

\subsection{A Frobenius root lemma}

The next lemma will be used to exploit relations of the form $x^N\in Z$. When $N$ is a power of
two and $Z$ is perfect, such a relation already forces $x$ itself to be central.

\begin{lemma}\label{L:frobroot}
Let $D$ be a division ring of characteristic two whose center $Z$ is perfect, and let $r \ge 0$.
If $x \in D$ satisfies $x^{2^{r}} \in Z$, then $x \in Z$.
\end{lemma}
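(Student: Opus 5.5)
Since $Z$ is perfect, every element of $Z$ has a unique $2^r$-th root in $Z$, so the plan is to pull a central root out and then use the Frobenius identity inside a commutative subring. I will argue by induction on $r$, or equivalently treat a general $r$ directly. The case $r=0$ is trivial.

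Suppose $x^{2^r}=z\in Z$. Because $Z$ is perfect, there is $w\in Z$ with $w^{2^r}=z$. The elements $x$ and $w$ commute, since $w$ is central. Hence they generate a commutative subring $Z[x]\subseteq D$ of characteristic two, and the binomial (Frobenius) identity holds in that subring:
\[
        (x+w)^{2^r}=x^{2^r}+w^{2^r}=z+z=0 .
\]
This is the key step, and it requires the commutation of $x$ with $w$. That commutation is automatic here, since $w\in Z$.

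Since $D$ is a division ring, it has no nonzero nilpotent elements. So $(x+w)^{2^r}=0$ forces $x+w=0$, that is, $x=w\in Z$.

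There is no real obstacle in this argument. The only point to handle carefully is that the Frobenius identity $(a+b)^{2^r}=a^{2^r}+b^{2^r}$ is valid only for commuting $a,b$. That is exactly why the root $w$ must be taken in $Z$, which is where perfectness of $Z$ is used, rather than as an arbitrary root in $D$. If one prefers to argue by induction instead, take $r\ge1$ and note that $(x^{2^{r-1}})^2\in Z$. The case $r=1$, applied to $x^{2^{r-1}}$, gives $x^{2^{r-1}}\in Z$, and the induction hypothesis then finishes. The base case $r=1$ is the computation $(x+w)^2=x^2+w^2=0$ above.
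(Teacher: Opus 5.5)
Your proof is correct and is essentially the same as the paper's. Both take a central $2^r$-th root $w$ of $x^{2^r}$ (this is where perfectness of $Z$ is used), use the centrality of $w$ to get $(x+w)^{2^r}=x^{2^r}+w^{2^r}=0$, and conclude $x=w\in Z$ because a division ring has no nonzero nilpotent elements. Your optional induction variant is also valid but unnecessary.
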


\begin{proof}
Put $\alpha = x^{2^{r}} \in Z$. Since $Z$ is perfect there is $\beta \in Z$ with
$\beta^{2^{r}} = \alpha$. As $\beta$ is central, $(x+\beta)^2 = x^2 + \beta^2$, and iterating,
$(x+\beta)^{2^{r}} = x^{2^{r}} + \beta^{2^{r}} = 0$. A division ring has no nonzero nilpotent
elements, so $x = \beta \in Z$.
\end{proof}

\begin{corollary}\label{C:oddpart}
Let $Z$ be perfect, let $M \ge 1$, and write $M = 2^{a}M_0$ with $M_0$ odd. If $x^{M} \in Z$
then $x^{M_0} \in Z$.
\end{corollary}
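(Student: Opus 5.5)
The plan is to apply Lemma~\ref{L:frobroot} to the element $y=x^{M_0}$. Since $M=2^aM_0$, we have
\[
y^{2^a}=\bigl(x^{M_0}\bigr)^{2^a}=x^{2^aM_0}=x^{M}\in Z.
\]
Here the powers of the single element $x$ commute with one another, so the exponent rule is valid even though $D$ is noncommutative.

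The hypotheses of Lemma~\ref{L:frobroot} are all in place: $D$ has characteristic two, $Z$ is perfect, $r=a\ge 0$, and $y^{2^r}\in Z$. The lemma therefore gives $y\in Z$, that is, $x^{M_0}\in Z$.

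The degenerate case $a=0$ needs no separate treatment. There $M_0=M$, the conclusion coincides with the hypothesis, and the lemma with $r=0$ is trivially true.

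No step is a genuine obstacle; the only point to check is that the factorization $M=2^aM_0$ feeds into the lemma with $r=a$. In particular, $x=0$ needs no special handling, since then $y=0$ lies in $Z$ directly.
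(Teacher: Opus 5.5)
Your proposal is correct and is exactly the paper's argument: apply Lemma~\ref{L:frobroot} with $r=a$ to $x^{M_0}$, using $(x^{M_0})^{2^a}=x^{M}\in Z$. Your remarks on the cases $a=0$ and $x=0$ are accurate, and neither case needs separate treatment.
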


\begin{proof}
Apply Lemma \ref{L:frobroot} to $u = x^{M_0}$, noting $u^{2^{a}} = x^{M} \in Z$.
\end{proof}

We shall combine the Frobenius root lemma with the following elementary observation.

\begin{lemma}\label{L:additivity}
Let $D \ne Z$ and let $h : D \to D$ be additive. If $h(x) \ne 0$ implies $x \in Z$, then
$h = 0$.
\end{lemma}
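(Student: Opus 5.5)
Assume, for contradiction, that there is some $x_0\in D$ with $h(x_0)\neq 0$. The plan is to show that $h$ is nonzero at a noncentral element, which contradicts the hypothesis. By that hypothesis, $x_0\in Z$. Since $D\neq Z$, we may also choose some $y\in D\setminus Z$.

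Consider the two elements $y$ and $y+x_0$. Both are noncentral: $y$ by choice, and $y+x_0$ because $x_0\in Z$ and $y\notin Z$, so $y+x_0\in Z$ would force $y\in Z$. Applying the hypothesis in contrapositive form gives $h(y)=0$ and $h(y+x_0)=0$. Additivity then yields
\[
h(x_0)=h\bigl((y+x_0)+y\bigr)=h(y+x_0)+h(y)=0
\]
(characteristic two is not needed here; in general one writes $h(x_0)=h(y+x_0)-h(y)$). This contradicts $h(x_0)\neq0$, so $h=0$.

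The argument has no real obstacle. The only point needing care is that $Z$ is an additive subgroup, which is what guarantees $y+x_0\notin Z$. The substance of the lemma is simply that $D\setminus Z$ additively generates $D$ whenever $D\neq Z$.
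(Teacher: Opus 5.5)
Your proposal is correct and is essentially the paper's own proof: you pick a noncentral $y$, note that $y$ and $y+x_0$ are both noncentral, so $h$ vanishes on both, and recover $h(x_0)$ by additivity. Your remark that $h(x_0)=h(y+x_0)-h(y)$ avoids characteristic two is a harmless improvement over the paper's $h(\lambda)=h(\lambda+x)+h(x)$, which relies on the section's standing assumption $\charac D=2$.
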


\begin{proof}
Suppose $h \ne 0$ and choose $\lambda$ with $h(\lambda) \ne 0$; by hypothesis $\lambda \in Z$.
Pick $x \in D \setminus Z$. Then $x \notin Z$ and $\lambda + x \notin Z$, so
$h(x) = h(\lambda+x) = 0$ and hence $h(\lambda) = h(\lambda+x) + h(x) = 0$, a contradiction.
\end{proof}

\begin{remark}\label{R:shortcut613}
Lemmas \ref{L:frobroot} and \ref{L:additivity} shorten the third step of
Theorem~\ref{thm:char-two-infinite-center-power} whenever $Z$ is perfect. There $N = 2^{r}$,
$h$ has already been shown to be central-valued, and
$h(x) = h(x^{-1})x^{N}$ gives $h(x) \ne 0 \Rightarrow x^{N} \in Z$; Lemma \ref{L:frobroot} then
gives $x \in Z$ and Lemma \ref{L:additivity} gives $h = 0$ at once, with no appeal to
Theorem~\ref{thm:bounded-degree} and no expansion of $(y+\nu^2 x_0)^N$. The expansion argument in
Theorem~\ref{thm:char-two-infinite-center-power} remains necessary for imperfect infinite centers
such as $\F_2(t)$.
\end{remark}

\subsection{The graded system over a finite center}

We now turn to finite centers and impose the following standing hypotheses:
\begin{equation}\label{E:standing}
\charac D = 2, \qquad Z = Z(D) = \Fq, \qquad q = 2^{m} \ge 4, \qquad [D:Z] = \infty .
\end{equation}
By Theorem~\ref{thm:char-two-gpi-reduction}, every solution of \eqref{eq:main} satisfies
$f = g = h$, where $h$ is additive, $c = h(1) \in Z$, and \eqref{eq:one-map-reduced} holds.
By Lemma~\ref{lem:central-square-semilinearity},
$h(\lambda^{2}x) = \lambda^{N}h(x)$ for $\lambda \in Z^{\times}$;
since squaring is an automorphism of $\Fq$, this says precisely that $h$ is homogeneous of a single
weight,
\begin{equation}\label{E:weight}
h(\lambda x) = \lambda^{s}h(x) \quad (\lambda \in \Fq^{\times}, \ x \in D),
\qquad s \equiv N\,2^{m-1}, \qquad 2s \equiv N \pmod{q-1}.
\end{equation}

The point of the next lemma is that the single identity \eqref{E:compact} is only the $\lambda = 1$
shadow of a system of $q-1$ identities, one for each residue class of degrees.

\begin{lemma}\label{L:graded}
Assume \eqref{E:standing}. For every $t \in \mathbb{Z}/(q-1)\mathbb{Z}$ and every $x \in D$,
\begin{equation}\label{E:graded}
\sum_{\substack{1 \le j \le n \\ j \equiv t \!\!\pmod{q-1}}} A_j(x) \;=\;
\begin{cases}
h(x^{2}) + c\,x^{N}, & t \equiv s \pmod{q-1},\\[2pt]
0, & \text{otherwise.}
\end{cases}
\end{equation}
\end{lemma}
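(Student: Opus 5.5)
Apply the compact identity \eqref{E:compact} at $\lambda x$ for $\lambda\in\Fq^\times$ and separate the result by powers of $\lambda$; this is the same character-separation device used in Theorem~\ref{thm:canonical-weight-pairing}.

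\textbf{Scaling each term.} By \eqref{E:weight}, $h(\lambda x)=\lambda^{s}h(x)$, and $\lambda$ is central. Each bracket therefore scales as
\[
A_j(\lambda x)=\binom{n}{j}\lambda^{j}x^{j}\lambda^{s}h(x)+\binom{n}{j-1}\lambda^{j-1}x^{j-1}\lambda^{s}h(x)\lambda x=\lambda^{j+s}A_j(x),
\]
so both summands of $A_j$ carry degree $j$ in $x$. For the other terms:
\begin{itemize}
\item the left side gives $h(\lambda^{2}x^{2})=\lambda^{2s}h(x^{2})$;
\item since $c\in Z$, the last term gives $c(\lambda x)^{N}=\lambda^{N}c\,x^{N}$, and $\lambda^{N}=\lambda^{2s}$ because $2s\equiv N \pmod{q-1}$.
\end{itemize}

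\textbf{The scaled identity.} Substituting these into \eqref{E:compact} at $\lambda x$ gives, for every $\lambda\in\Fq^\times$,
\[
\sum_{j=1}^{n}\lambda^{j+s}A_j(x)=\lambda^{2s}\bigl(h(x^{2})+c\,x^{N}\bigr),
\]
using that $-1=1$ in characteristic two. Divide by $\lambda^{s}$ and group the $j$ by residue class modulo $q-1$. This yields
\[
\sum_{t\in\Z/(q-1)\Z}\lambda^{t}B_t(x)=\lambda^{s}\bigl(h(x^{2})+cx^{N}\bigr),
\]
where $B_t(x)$ denotes the left side of \eqref{E:graded}.

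\textbf{Separating the characters.} Fix $x$ and regard this as an identity in the $\Fq$-vector space $D$. The characters $\lambda\mapsto\lambda^{t}$ for $t\in\Z/(q-1)\Z$ are linearly independent over $\Fq$; concretely, one multiplies by $\lambda^{-t_0}$ and sums over $\Fq^\times$, which is the projector \eqref{eq:weight-projector}. Comparing coefficients gives $B_t(x)=0$ for $t\ne s$ and $B_s(x)=h(x^2)+cx^N$, which is \eqref{E:graded}.

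The main point of care is the weight $s$ of $h$ as an $\Fq^\times$-character. Lemma~\ref{lem:central-square-semilinearity} only states $h(\lambda^2x)=\lambda^Nh(x)$. Because squaring is a bijection of $\Fq^\times$, this is equivalent to $h(\lambda x)=\lambda^{s}h(x)$ with $s\equiv N2^{m-1}$, as recorded in \eqref{E:weight}. Once that is in place, the remaining steps are routine bookkeeping of exponents modulo $q-1$.
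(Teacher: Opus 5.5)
Your proposal is correct and follows the paper's proof essentially step for step. You evaluate \eqref{E:compact} at $\lambda x$, use the weight relation \eqref{E:weight} to get $A_j(\lambda x)=\lambda^{s+j}A_j(x)$, divide by $\lambda^{s}$ using $2s\equiv N \pmod{q-1}$, and separate the residue classes by character orthogonality over $\Fq^\times$. The only cosmetic difference is that you write the left side as $\lambda^{2s}h(x^2)$ directly from \eqref{E:weight}, whereas the paper writes it as $\lambda^{N}h(x^2)$ via Lemma~\ref{lem:central-square-semilinearity} and then uses $N-s\equiv s$.
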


\begin{proof}
Fix $x \in D$ and let $\lambda \in \Fq^{\times}$. Evaluate \eqref{E:compact} at $\lambda x$. On the
left, $h((\lambda x)^{2}) = h(\lambda^{2}x^{2}) = \lambda^{N}h(x^{2})$ by
Lemma~\ref{lem:central-square-semilinearity}. On the right,
centrality of $\lambda$ together with \eqref{E:weight} gives
\[
\binom{n}{j}(\lambda x)^{j}h(\lambda x) + \binom{n}{j-1}(\lambda x)^{j-1}h(\lambda x)(\lambda x)
= \lambda^{s+j}A_j(x),
\]
while the last term becomes $\lambda^{N}c\,x^{N}$. Hence
\[
\lambda^{N}\bigl(h(x^{2}) + c\,x^{N}\bigr) = \sum_{j=1}^{n}\lambda^{s+j}A_j(x)
\qquad (\lambda \in \Fq^{\times}).
\]
Dividing by $\lambda^{s}$ and using $N - s \equiv s \pmod{q-1}$ from \eqref{E:weight},
\[
\lambda^{s}\bigl(h(x^{2}) + c\,x^{N}\bigr) = \sum_{j=1}^{n}\lambda^{j}A_j(x)
\qquad (\lambda \in \Fq^{\times}).
\]
Multiply by $\lambda^{-t}$, sum over $\lambda \in \Fq^{\times}$, and use the
character-orthogonality relation from Lemma~\ref{lem:canonical-weight-decomposition}, which in
characteristic two reads
$\sum_{\lambda \in \Fq^{\times}} \lambda^{u} = 1$ if $(q-1) \mid u$ and $0$ otherwise. This gives
\eqref{E:graded}.
\end{proof}

Everything now hinges on the bracket $A_1$, which is isolated in its degree class exactly when
$n \le q-1$.

\begin{lemma}\label{L:A1}
Assume \eqref{E:standing} and $2 \le n \le q-1$. Then $s \not\equiv 1 \pmod{q-1}$ and
\begin{equation}\label{E:A1}
n\,x\,h(x) + h(x)\,x = 0 \qquad (x \in D).
\end{equation}
\end{lemma}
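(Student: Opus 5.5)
Apply Lemma~\ref{L:graded} with $t\equiv 1$. The proof then splits into two checks. First, $A_1$ must be the only bracket in the residue class of $1$. Second, $s\not\equiv 1$, so that the right-hand side of \eqref{E:graded} for $t\equiv 1$ is $0$ rather than $h(x^2)+cx^N$.

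For the isolation, note that the indices in \eqref{E:graded} run over $1\le j\le n\le q-1$. Any two such indices differ by at most $q-2<q-1$. So $j=1$ is the unique index in its class, and Lemma~\ref{L:graded} gives $A_1(x)=0$ as soon as $s\not\equiv1$. Unwinding the definition in \eqref{E:compact}, $A_1(x)=\binom n1 xh(x)+\binom n0 h(x)x=n\,x\,h(x)+h(x)\,x$, which is exactly \eqref{E:A1}, with $n$ read modulo $2$.

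The nontrivial point, and the step I expect to be the main obstacle, is showing $s\not\equiv 1\pmod{q-1}$. By \eqref{E:weight}, $2s\equiv N\pmod{q-1}$. Since $q-1$ is odd, $2$ is invertible modulo $q-1$, so $s\equiv1$ holds iff $N\equiv 2$, i.e. iff $n\equiv 1\pmod{q-1}$. With $2\le n\le q-1$ we have $1\le n-1\le q-2$. Hence $q-1\nmid n-1$, so $n\not\equiv1$ and therefore $s\not\equiv1$. The step is only delicate because one must use that $s$ is pinned down by $2s\equiv N$ modulo an odd modulus, and not just any representative; the paper's choice $s\equiv N2^{m-1}$ satisfies this. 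The bound $n\le q-1$ is used twice, once for isolating $A_1$ and once for excluding $s\equiv1$.

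Finally, I would remark that for $n$ even \eqref{E:A1} gives $h(x)x=0$, hence $h(x)=0$ for $x\neq0$, so $h=0$. For $n$ odd it gives $[h(x),x]=0$, the commutation hypothesis of Corollary~\ref{C:pascal}. Presumably this is how the lemma is used later.
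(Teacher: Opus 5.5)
Your proof is correct and is essentially the paper's argument: apply Lemma~\ref{L:graded} with $t\equiv 1$, use $n\le q-1$ to see that $j=1$ is alone in its residue class, and rule out $s\equiv 1$ because $2s\equiv N$ would force $n\equiv 1\pmod{q-1}$, which $2\le n\le q-1$ excludes. Only the implication $s\equiv 1\Rightarrow N\equiv 2$ is needed, so your appeal to the invertibility of $2$ modulo $q-1$ is harmless but unnecessary.
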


\begin{proof}
If $s \equiv 1$, then $N \equiv 2s \equiv 2$ by \eqref{E:weight}, i.e. $n \equiv 1 \pmod{q-1}$;
but $2 \le n \le q-1$ excludes both $n=1$ and $n=q$, so this is impossible. Since $n \le q-1$, the
integers $1,2,\dots,n$ are pairwise incongruent modulo $q-1$, so the class $t \equiv 1$ meets
$\{1,\dots,n\}$ only at $j = 1$. As $s \not\equiv 1$, \eqref{E:graded} with $t=1$ gives
$A_1(x) = \binom{n}{1}xh(x) + \binom{n}{0}h(x)x = 0$.
\end{proof}

\subsection{Vanishing below the order of the center}

\begin{theorem}\label{T:even}
Assume \eqref{E:standing} and let $n$ be even with $2 \le n \le q-1$. Then $f = g = 0$.
\end{theorem}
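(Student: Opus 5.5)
The plan is to reduce to the single map $h$ and then read off vanishing directly from the isolated degree-one bracket of Lemma~\ref{L:A1}; in the even case this bracket degenerates in a way that kills $h$ immediately. First, under the standing hypotheses \eqref{E:standing} (in particular $[D:Z]=\infty$), Theorem~\ref{thm:char-two-gpi-reduction} shows that every solution $(f,g)$ of \eqref{eq:main} satisfies $f=g=h$. Here $h$ is additive, satisfies the reduced identity \eqref{eq:one-map-reduced}, and has $c=h(1)\in Z$. It therefore suffices to prove $h=0$.

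Next, since $2\le n\le q-1$, Lemma~\ref{L:A1} applies and gives
\[
n\,x\,h(x)+h(x)\,x=0\qquad(x\in D).
\]
The coefficient $n$ is read in characteristic two, where the integer $n$ acts as $n\cdot 1_D$. Because $n$ is even, $n\cdot 1_D=0$, so the first term disappears and we are left with $h(x)\,x=0$ for every $x\in D$. For $x\neq0$ we may multiply on the right by $x^{-1}$ to obtain $h(x)=0$. Since $h(0)=0$ by additivity, it follows that $h=0$, and hence $f=g=0$.

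There is essentially no obstacle once Lemma~\ref{L:A1} is available. The only point to check is that the hypotheses of Lemma~\ref{L:A1} are exactly those of the theorem. That lemma's own work, namely the isolation of the $j=1$ class via the graded system of Lemma~\ref{L:graded} and the verification that $s\not\equiv1\pmod{q-1}$, is already done. The parity of $n$ is the only extra input, and it converts the bracket identity directly into vanishing, without needing the central-valuedness and Frobenius-root route (Corollary~\ref{C:pascal}, Lemma~\ref{L:frobroot}) that the odd case will presumably require.
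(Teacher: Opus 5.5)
Your proof is correct and matches the paper's argument. After the reduction $f=g=h$ from Theorem~\ref{thm:char-two-gpi-reduction}, Lemma~\ref{L:A1} with $n$ even gives $h(x)x=0$. Hence $h(x)=0$ for $x\neq0$, and $h(0)=0$ by additivity.
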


\begin{proof}
By Lemma \ref{L:A1}, $h(x)x = n\,xh(x) = 0$ for every $x$, so $h(x) = 0$ for every $x \ne 0$; and
$h(0) = 0$.
\end{proof}

\begin{proposition}\label{P:central}
Assume \eqref{E:standing} and let $n$ be odd with $3 \le n \le q-1$. Then
\[
[h(x),x] = 0 \quad (x \in D), \qquad h(D) \subseteq Z, \qquad h(x) = h(x^{-1})\,x^{N} \quad (x \in D^{\times}).
\]
In particular $h(x) \ne 0$ implies $x^{N} \in Z$.
\end{proposition}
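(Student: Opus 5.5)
The plan is to extract commutation from Lemma~\ref{L:A1}, upgrade it to centrality using the single weight \eqref{E:weight}, and then read off the last two assertions from the reduced identity \eqref{eq:one-map-reduced}.

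\emph{Step 1: $[h(x),x]=0$.} Since $n$ is odd and $3\le n\le q-1$, Lemma~\ref{L:A1} applies. In characteristic two the coefficient $n$ in \eqref{E:A1} is $1$, so \eqref{E:A1} reads $xh(x)+h(x)x=0$, that is, $[h(x),x]=0$ for every $x\in D$.

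\emph{Step 2: $h(D)\subseteq Z$.} Replace $x$ by $x+y$ in $[h(x),x]=0$ and subtract the instances at $x$ and at $y$. Additivity of $h$ and bilinearity of the bracket give
\[
[h(x),y]=[h(y),x]\qquad(x,y\in D),
\]
where characteristic two removes the signs. Now take $\lambda\in\Fq^\times$. Using the weight relation \eqref{E:weight} on the left and centrality of $\lambda$ on the right, we get
\[
\lambda^s[h(x),y]=[h(\lambda x),y]=[h(y),\lambda x]=\lambda[h(y),x]=\lambda[h(x),y].
\]
Hence $(\lambda^s-\lambda)[h(x),y]=0$. By Lemma~\ref{L:A1}, $s\not\equiv 1\pmod{q-1}$. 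Take $\lambda$ to be a generator of the cyclic group $\Fq^\times$, which has order $q-1$. Then $\lambda^{s-1}\neq1$, so $\lambda^s-\lambda$ is a nonzero central scalar and can be cancelled. This gives $[h(x),y]=0$ for all $x,y$, so $h(D)\subseteq Z$.

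\emph{Step 3: the relation $h(x)=h(x^{-1})x^N$.} Since $h(x)$ and $h(x^{-1})$ are central, \eqref{eq:one-map-reduced} becomes $h(x)x^{-1}=x^nh(x^{-1})=h(x^{-1})x^n$; the signs disappear in characteristic two. Multiplying on the right by $x$ gives $h(x)=h(x^{-1})x^N$ for $x\in D^\times$.

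\emph{Step 4: the final implication.} If $h(x)\neq0$, then $x\neq0$, and Step 3 forces $h(x^{-1})\neq0$. This is a nonzero central element, so $x^N=h(x^{-1})^{-1}h(x)\in Z$.

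I expect no serious obstacle. The only point requiring care is Step 2: the weight $s$ must be separated from $1$ by an actual element of $\Fq^\times$. This is exactly what the congruence $s\not\equiv1$ from Lemma~\ref{L:A1} provides, via a generator of $\Fq^\times$.
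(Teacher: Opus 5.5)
Your proposal is correct and matches the paper's proof step for step. The steps are: commutation from Lemma~\ref{L:A1} with $n$ odd; linearization to $[h(x),y]=[h(y),x]$; the weight comparison $(\lambda^s+\lambda)[h(x),y]=0$ using $s\not\equiv 1 \pmod{q-1}$; and moving the central value $h(x^{-1})$ past $x^n$ in \eqref{eq:one-map-reduced}. Choosing a generator of $\Fq^\times$ is simply a concrete way to realize the paper's remark that $\lambda\mapsto\lambda^{s-1}$ is not identically $1$.
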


\begin{proof}
For $n$ odd, \eqref{E:A1} reads $[h(x),x] = 0$. Replacing $x$ by $x+y$ and subtracting the
instances at $x$ and $y$ gives the linearized form $[h(x),y] = [h(y),x]$ for all $x,y \in D$. Fix
$x,y$ and let $\lambda \in \Fq^{\times}$. Applying the linearized form to the pair $(\lambda x, y)$
and using \eqref{E:weight} and the centrality of $\lambda$,
\[
\lambda^{s}[h(x),y] = [h(\lambda x),y] = [h(y),\lambda x] = \lambda\,[h(y),x] = \lambda\,[h(x),y],
\]
so $(\lambda^{s} + \lambda)[h(x),y] = 0$. By Lemma \ref{L:A1} we have $s \not\equiv 1 \pmod{q-1}$,
so $\lambda \mapsto \lambda^{s-1}$ is not identically $1$ on $\Fq^{\times}$; choosing $\lambda$
with $\lambda^{s-1} \ne 1$ gives $[h(x),y] = 0$ for all $x,y$, that is, $h(D) \subseteq Z$.
Finally, \eqref{eq:one-map-reduced} reads $h(x)x^{-1} = x^{n}h(x^{-1})$, and
$h(x^{-1}) \in Z$ may be moved past
$x^{n}$, giving $h(x) = h(x^{-1})x^{N}$. If $h(x) \ne 0$ then $h(x^{-1}) \ne 0$ and
$x^{N} = h(x^{-1})^{-1}h(x) \in Z$.
\end{proof}

\begin{theorem}\label{T:odd}
Assume \eqref{E:standing} and let $n$ be odd with $3 \le n \le q-1$. Then $f = g = 0$.
\end{theorem}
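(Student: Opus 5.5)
The plan is to argue by contradiction, starting from the conclusions of Proposition~\ref{P:central} and following the third step of Theorem~\ref{thm:char-two-infinite-center-power}. The difference is that the infinite family of central dilations used there is replaced by the $q$ elements of $\Fq$, and a Vandermonde argument in place of Lemma~\ref{lem:laurent-independence-infinite}. By Theorem~\ref{thm:char-two-gpi-reduction} we have $f=g=h$, so it suffices to show $h=0$. Suppose $h\neq0$. Proposition~\ref{P:central} gives $h(D)\subseteq Z$, and $h(x)\neq0$ implies $x^{N}\in Z$.

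Since $Z=\Fq$ is finite, it is perfect. Write $N=2^{a}N_0$ with $N_0$ odd; here $a\geq1$ because $n$ is odd. Corollary~\ref{C:oddpart} then upgrades the support condition to
\[
h(x)\neq0\ \Longrightarrow\ x^{N_0}\in Z .
\]
Since $N\leq q$ and $N$ is even, we get $N_0\leq N/2\leq q/2<q-1$. This numerical bound is exactly where the hypothesis $n\leq q-1$ is used a second time.

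Next we show that every element satisfies $x^{N_0}\in Z$. Fix $y$ with $h(y)\neq0$, and let $x\in D$ be arbitrary. For each $\lambda\in\Fq$, the weight relation \eqref{E:weight} gives
\[
h(\lambda x+y)=\lambda^{s}h(x)+h(y).
\]
This is not automatically nonzero, so we first reduce to the case $h(x)=0$. If $h(x)\neq0$, then already $x^{N_0}\in Z$ and there is nothing to prove. If $h(x)=0$, then $h(\lambda x+y)=h(y)\neq0$ for every $\lambda\in\Fq$, hence $(\lambda x+y)^{N_0}\in Z$ for all $\lambda\in\Fq$. Expanding, we have
\[
(\lambda x+y)^{N_0}=\sum_{k=0}^{N_0}\lambda^{k}W_k ,
\]
where $W_k$ is the sum of the words containing $k$ letters $x$ and $N_0-k$ letters $y$. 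We pass to the $\Fq$-vector space $D/Z$. The functions $\lambda\mapsto\lambda^{k}$ for $0\leq k\leq N_0<q-1$ are linearly independent on $\Fq$, by the Vandermonde determinant on the $q$ distinct points of $\Fq$. Hence every $W_k$ lies in $Z$, and in particular $W_{N_0}=x^{N_0}\in Z$.

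Therefore every element of $D$ is algebraic over $Z$ of degree at most $N_0$. Theorem~\ref{thm:bounded-degree} then gives $[D:Z]<\infty$, which contradicts \eqref{E:standing}. So $h=0$, and hence $f=g=0$. Alternatively, when $N_0=1$ every element is central, contradicting $[D:Z]=\infty$ directly, or one can invoke Lemma~\ref{L:additivity}.

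The main obstacle is the separation step in the expansion. Over a finite center only $q$ dilations are available, so one must check that the degree $N_0$ stays strictly below $q-1$; this is what the Frobenius-root reduction from $N$ to $N_0$ provides. One must also make sure that the perturbation $\lambda x$ keeps $h(\lambda x+y)$ nonzero. That is why the argument handles separately the case $h(x)\neq0$, where the conclusion is immediate, and the case $x\in\ker h$, where the whole affine line $y+\Fq x$ stays in the support of $h$.
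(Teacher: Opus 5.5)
Your proposal is correct and follows the paper's proof essentially step for step: the reduction to $h$, the Frobenius-root passage from $N$ to $N_0\le q/2$, the split between $x\notin\ker h$ and $x\in\ker h$, the expansion of $(y+\lambda x)^{N_0}$ and separation of its coefficients in $D/Z$, and finally Theorem~\ref{thm:bounded-degree}. The only cosmetic difference is that you separate the coefficients with a Vandermonde argument on all $q$ points of $\Fq$, which needs only $N_0\le q-1$. The paper instead uses character orthogonality over $\Fq^\times$, which needs $N_0\le q-2$; both bounds hold here.
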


\begin{proof}
Suppose $h \ne 0$ and set $K = \ker h$, an $\Fq$-subspace of $D$ by \eqref{E:weight}. Write
$N = 2^{a}N_0$ with $N_0$ odd; since $n$ is odd, $N$ is even, so $a \ge 1$ and
\[
N_0 \le N/2 \le q/2 = 2^{m-1} \le 2^{m}-2 = q-2 .
\]
By Proposition \ref{P:central} and Corollary \ref{C:oddpart},
\begin{equation}\label{E:offK}
x \notin K \;\Longrightarrow\; x^{N_0} \in Z .
\end{equation}
We claim the same holds for $x \in K$. Fix $y \notin K$, let $k \in K$, and let
$\mu \in \Fq^{\times}$. Then $h(y + \mu k) = h(y) + \mu^{s}h(k) = h(y) \ne 0$, so
$(y+\mu k)^{N_0} \in Z$ by \eqref{E:offK}. As $\mu$ is central,
\[
(y+\mu k)^{N_0} \;=\; \sum_{i=0}^{N_0} \mu^{i}\,U_i, \qquad
U_i \;=\; \sum_{\substack{w \text{ a word of length } N_0 \text{ in } y,k \\ \text{with exactly } i \text{ letters } k}} w ,
\]
with $U_i$ independent of $\mu$. Passing to the $\Fq$-vector space $D/Z$ we obtain
$\sum_{i=0}^{N_0}\mu^{i}\overline{U_i} = 0$ for every $\mu \in \Fq^{\times}$. Since
$0 \le i \le N_0 \le q-2$, these exponents are pairwise incongruent modulo $q-1$, so the
orthogonality relation used in Lemma \ref{L:graded} forces $\overline{U_i} = 0$ for every $i$; in
particular $U_{N_0} = k^{N_0} \in Z$.

Hence $x^{N_0} \in Z$ for every $x \in D$, so every element of $D$ is algebraic over $Z$ of
degree at most $N_0$. Theorem~\ref{thm:bounded-degree} gives $[D:Z] < \infty$, contradicting
\eqref{E:standing}.
Therefore $h = 0$.
\end{proof}

\begin{remark}
When $N_0 = 1$, equivalently when $N$ is a power of two, \eqref{E:offK} gives
$x \notin K \Rightarrow x \in Z$, and Lemma~\ref{L:additivity} finishes the proof directly.
This includes the endpoint $n = q-1$, where $N = q$.
\end{remark}

\begin{theorem}\label{T:main}
Let $D$ be a division ring of characteristic two with finite center $Z(D) = \Fq$ and
$[D:Z(D)] = \infty$. Let $2 \le n \le q-1$ and let $f,g : D \to D$ be additive maps satisfying
\eqref{eq:main}. Then $f = g = 0$.
\end{theorem}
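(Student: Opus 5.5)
This theorem is the assembly of the results of this section, so the plan is to verify that the standing hypotheses \eqref{E:standing} hold and then split by the parity of $n$.

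First I check \eqref{E:standing}. The center is $\Fq$ with $q=2^m$, and the characteristic is two. Since $n\geq2$ and $n\leq q-1$, we get $q\geq3$, and as $q$ is a power of two, $q\geq4$. Together with $[D:Z]=\infty$, this gives all of \eqref{E:standing}.

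Under these hypotheses, Theorem~\ref{thm:char-two-gpi-reduction} reduces any solution to $f=g=h$. Here $h$ is additive, satisfies \eqref{eq:one-map-reduced}, and has $c=h(1)\in Z$. Lemma~\ref{lem:central-square-semilinearity} together with the bijectivity of squaring on $\Fq$ gives the single-weight relation \eqref{E:weight}. This is exactly the setting of Lemmas~\ref{L:graded} and~\ref{L:A1}, which hold because $2\le n\le q-1$.

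Next I split on parity.

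\emph{Even $n$.} Theorem~\ref{T:even} applies directly: the isolated bracket $A_1$ gives $h(x)x=0$, so $h=0$.

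\emph{Odd $n$.} Here $3\le n\le q-1$, and Theorem~\ref{T:odd} applies. Proposition~\ref{P:central} makes $h$ central-valued and yields the implication $h(x)\neq0\Rightarrow x^N\in Z$. Since the finite field $\Fq$ is perfect, Corollary~\ref{C:oddpart} reduces this to $x^{N_0}\in Z$. The kernel-translation argument then extends the conclusion to all of $D$, using the bound $N_0\le q-2$ and orthogonality over $\Fq^\times$. Finally, Theorem~\ref{thm:bounded-degree} contradicts $[D:Z]=\infty$.

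In either case $h=0$, and hence $f=g=0$.

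The only step needing any care is confirming that the hypotheses of the cited results are met. That means the range $2\le n\le q-1$ must be used in two places. It isolates $j=1$ in its residue class in Lemma~\ref{L:A1}. It also gives $N_0\le q-2$ in Theorem~\ref{T:odd}; for this, note that $N_0\le N/2\le q/2$, and $q/2\le q-2$ because $q\geq4$.
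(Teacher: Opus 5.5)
Your proposal is correct and is essentially the paper's proof: you check $q\ge 4$ and the standing hypotheses \eqref{E:standing}, reduce to $f=g=h$ via Theorem~\ref{thm:char-two-gpi-reduction}, and then invoke Theorem~\ref{T:even} or Theorem~\ref{T:odd} according to the parity of $n$. One addition to your bookkeeping: the bound $2\le n\le q-1$ is also what rules out $s\equiv 1 \pmod{q-1}$ in Lemma~\ref{L:A1}, and that exclusion is needed both to obtain $A_1=0$ and later to force central-valuedness in Proposition~\ref{P:central}.
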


\begin{proof}
The hypothesis forces $q \ge 4$, since for $q = 2$ the range is empty. By
Theorem~\ref{thm:char-two-gpi-reduction}, $f = g = h$, with $h$ as in \eqref{E:standing}.
Apply Theorem~\ref{T:even} if $n$ is even and Theorem~\ref{T:odd} if $n$ is odd.
\end{proof}

\begin{corollary}\label{C:summary}
Let $D$ be a division ring of characteristic two with $[D:Z(D)] = \infty$, and let $n \ge 2$.
If $Z(D)$ is infinite, then \eqref{eq:main} forces $f = g = 0$ for every $n$. If $Z(D)$ is
finite, then \eqref{eq:main} forces $f = g = 0$ for every $n < |Z(D)|$.
\end{corollary}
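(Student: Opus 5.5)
The corollary is the conjunction of two results already in hand, so the plan is to split on whether the center is finite or infinite. Write $Z=Z(D)$. By hypothesis $\charac D=2$, $[D:Z]=\infty$, and $n\ge 2$.

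\emph{Infinite center.} Suppose $Z$ is infinite. Corollary~\ref{cor:char-two-infinite-center} applies verbatim and gives $f=g=0$ for every $n\ge 2$. Recall how that corollary is built from Theorems~\ref{thm:char-two-infinite-center-nonpower} and~\ref{thm:char-two-infinite-center-power}. If $N=n+1$ is not a power of two, the first theorem applies. If $N$ is a power of two, then $N\ge 3$ forces $N\ge 4$, that is $r\ge 2$, which is exactly what the second theorem requires.

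\emph{Finite center.} Suppose $Z$ is finite. A finite field of characteristic two has the form $Z=\F_q$ with $q=2^m$, so $|Z(D)|=q$. The hypothesis $n<|Z(D)|$ is then the same as $n\le q-1$. Together with $n\ge 2$ this gives $2\le n\le q-1$, and Theorem~\ref{T:main} yields $f=g=0$.

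\emph{Degenerate case $q=2$.} Here no integer $n\ge 2$ satisfies $n<2$, so the finite-center assertion holds vacuously. The same observation appears in the proof of Theorem~\ref{T:main}, so the standing condition $q\ge 4$ from \eqref{E:standing} needs no separate check.

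None of these steps involves real work. The only points to confirm are that the strict inequality $n<|Z(D)|$ matches the range $n\le q-1$ of Theorem~\ref{T:main}, and that the power-of-two case in the infinite-center argument never falls at $N=2$, which $n\ge 2$ rules out.
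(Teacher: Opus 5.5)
Your proposal is correct and matches the paper's proof: the infinite-center case is Corollary~\ref{cor:char-two-infinite-center}, and the finite-center case is Theorem~\ref{T:main}, once $n<|Z(D)|=q$ is read as $2\le n\le q-1$. Your extra checks, namely that $q=2$ is vacuous and that $N\ge 4$ in the power-of-two case, are accurate but already handled inside the results you cite.
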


\begin{proof}
The infinite-center case is Corollary~\ref{cor:char-two-infinite-center}; the finite-center case
is Theorem~\ref{T:main}.
\end{proof}

\subsection{Remaining cases}

The preceding results leave three regimes in characteristic two: finite centers with
$n\geq|Z(D)|$, center $\F_2$ with $n\geq3$, and centrally finite division rings with
$n\geq3$. The first limitation enters at Lemma~\ref{L:A1}. If $n\geq q$, the residue class of
$1$ modulo $q-1$ meets $\{1,\dots,n\}$ in $1,q,2q-1,\dots$, so \eqref{E:graded} yields the weaker
relation
\[
\sum_{\substack{k \ge 0\\1+k(q-1) \le n}}
\left[
\binom{n}{1+k(q-1)}x^{1+k(q-1)}h(x)
+\binom{n}{k(q-1)}x^{k(q-1)}h(x)x
\right]=0,
\]
and the powers $x^{k(q-1)}$ do not collapse in $D$. If $Z=\F_2$, the grading is vacuous.

\begin{problem}\label{prob:finite-center-large-n}
Let $D$ be a division ring of characteristic two with $Z(D)=\Fq$, where $q\geq4$, and
$[D:Z(D)]=\infty$. For $n\geq q$, can \eqref{eq:one-map-reduced} admit a nonzero additive
solution?
\end{problem}

\begin{problem}\label{prob:center-F2}
Let $D$ be a noncommutative division ring with $Z(D)=\F_2$ and
$[D:Z(D)]=\infty$. For $n\geq3$, can \eqref{eq:one-map-reduced} admit a nonzero additive
solution?
\end{problem}

The finite-field examples do not automatically extend to these division rings: they use the
additivity of the Frobenius maps $x\mapsto x^{p^r}$, whereas mixed noncommuting terms prevent
$(x+y)^p=x^p+y^p$ in general.

When $D$ is centrally finite, the generalized-polynomial argument does not force $f=g$, and the
following classification problem remains.

\begin{problem}\label{prob:centrally-finite-char-two}
Classify the additive solutions of \eqref{eq:main} when $D$ is a centrally finite division ring of
characteristic two and $n\geq3$.
\end{problem}

The symmetry of \eqref{eq:main} gives a preliminary reduction for this problem.

\begin{proposition}\label{P:symmetric}
Let $\charac D = 2$ and let $f,g : D \to D$ be additive maps satisfying \eqref{eq:main}, with
$c = f(1) = g(1)$. Then $(g,f)$ is also a solution of \eqref{eq:main}; the map $u=f+g$ satisfies
\eqref{eq:one-map-reduced}; and $u=P_c$ as a function on $D$, where $P_c$ is defined by
\eqref{eq:Pc-def}.
\end{proposition}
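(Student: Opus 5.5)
The three assertions are direct consequences of identities already established, so the plan is to verify them in order.

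\emph{Step 1: $(g,f)$ is a solution.} In characteristic two, \eqref{eq:main} is equivalent to \eqref{eq:f-from-g-char2}, namely $f(x)=x^n g(x^{-1})x$ for $x\in D^\times$. The proof of Lemma~\ref{lem:hua-reduction} already derived from this, by substituting $x\mapsto x^{-1}$, the companion form \eqref{eq:g-from-f-char2}: $g(x)=x^nf(x^{-1})x$. Multiplying the latter on the right by $x^{-1}$ gives $g(x)x^{-1}+x^nf(x^{-1})=0$ (signs are irrelevant in characteristic two). This is exactly \eqref{eq:main} for the pair $(g,f)$.

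\emph{Step 2: $u=f+g$ satisfies \eqref{eq:one-map-reduced}.} The solution set of \eqref{eq:main} is closed under addition of pairs, because the left-hand side is additive in the pair $(f,g)$. Adding the solutions $(f,g)$ and $(g,f)$ therefore gives the solution $(f+g,g+f)=(u,u)$. This is precisely \eqref{eq:one-map-reduced} with $h=u$, and $u$ is additive as a sum of additive maps.

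\emph{Step 3: $u=P_c$.} Lemma~\ref{lem:hua-reduction} gives $f(a)+g(a)=c+(1+a)^nc(1+a)+a^nca$ for every $a\in D$, with $c=f(1)=g(1)$. By \eqref{eq:Pc-def}, the right-hand side is the evaluation of $P_c$ at $a$. That lemma was proved for an arbitrary division ring of characteristic two, with no hypothesis on $[D:Z]$, so it applies in the centrally finite setting.

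There is no real obstacle here. The only point to check is that nothing in the argument depends on Theorem~\ref{thm:char-two-gpi-reduction}, which required $[D:Z]=\infty$; Steps 1–3 use only the identities from Lemma~\ref{lem:hua-reduction}. As a consistency check, $u(1)=f(1)+g(1)=2c=0$, and indeed $P_c(1)=c+2^{n+1}c+c=0$ in characteristic two.
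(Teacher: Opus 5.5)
Your proposal is correct and follows the paper's proof essentially step for step. The companion form \eqref{eq:g-from-f-char2} shows that $(g,f)$ is a solution, closure of the solution set under addition gives $(u,u)$, and Lemma~\ref{lem:hua-reduction} identifies $u$ with $P_c$; your observation that none of this needs $[D:Z]=\infty$ is accurate, and it is exactly why the proposition applies to the centrally finite problem.
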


\begin{proof}
By \eqref{eq:g-from-f-char2}, $g(x)=x^nf(x^{-1})x$, and therefore
\[
g(x)x^{-1}+x^nf(x^{-1})=2x^nf(x^{-1})=0.
\]
Thus $(g,f)$ is a solution. Since solutions form an additive group, $(u,u)$ with $u=f+g$ is a
solution, which is precisely \eqref{eq:one-map-reduced} for $u$. Lemma~\ref{lem:hua-reduction}
identifies $u$ with $P_c$.
\end{proof}

Consequently, the symmetric part $f+g$ of a solution is determined by $c$. A first step toward
Problem~\ref{prob:centrally-finite-char-two} is therefore to determine those $c\in D$ for which
$P_c$ is additive, equivalently those for which the generalized polynomial $\Phi_c$ defined in
\eqref{eq:Phi-def} is an identity of $D$.

\end{document}